\def\IZ{{\Bbb Z}}
\def\PSL{\rm{PSL}}
\def\sideremark#1{\ifvmode\leavevmode\fi\vadjust{\vbox to0pt{\vss
  \hbox to 0pt{\hskip\hsize\hskip1em
  \vbox{\hsize2.5cm\tiny\raggedright\pretolerance10000
  \noindent #1\hfill}\hss}\vbox to8pt{\vfil}\vss}}}
\newtheorem{theorem}{Theorem}[section]
\newtheorem{lemma}[theorem]{Lemma}
\newtheorem{corollary}[theorem]{Corollary}
\newtheorem{prop}[theorem]{Proposition}
\theoremstyle{definition}
\renewcommand{\H}{\mathbb{H}}
\renewcommand{\phi}{\varphi}
\newcommand{\rs}{\widehat{\mathbb{C}}}
\begin{document}

\title[Moduli spaces of hyperbolic 3-manifolds]{Moduli spaces
of hyperbolic 3-manifolds and dynamics on character varieties}
\author{Richard D. Canary and Peter A. Storm}
\begin{abstract}
The space $AH(M)$ of marked hyperbolic 3-manifold homotopy equivalent
to a compact 3-manifold with boundary $M$ sits inside the 
${\rm PSL}_2({\bf C})$-character variety
$X(M)$ of $\pi_1(M)$. We study the dynamics of the action of
${\rm Out}(\pi_1(M))$ on both $AH(M)$ and $X(M)$. The nature of the dynamics
reflects the topology of $M$.

The quotient 
$AI(M)=AH(M)/{\rm Out}(\pi_1(M))$ may naturally be thought of as the moduli
space of unmarked hyperbolic 3-manifolds homotopy equivalent to $M$
and its topology reflects the dynamics of the action.
\end{abstract}

\thanks{The first author was partially supported by National Science Foundation
grants DMS-0504791 
and DMS-0554239. The second author was partially supported by National
Science Foundation grant DMS-0904355 and the
Roberta and Stanley Bogen Visiting Professorship at Hebrew
University..}
\date{\today}
\maketitle

\section{Introduction}\label{sec:introduction}
\setcounter{section}{1} 

For a compact, orientable, hyperbolizable $3$-manifold $M$ with boundary, the deformation space $AH(M)$ of marked hyperbolic 3-manifolds homotopy equivalent to $M$ is a familiar object of study. 
This deformation space sits naturally inside the ${\rm PSL}_2({\bf C})$-character variety $X(M)$ and
the outer automorphism group ${\rm Out}(\pi_1(M))$ acts by homeomorphisms on both $AH(M)$ and $X(M)$.  The action of ${\rm Out}(\pi_1(M))$
on $AH(M)$ and $X(M)$ has largely been studied in the case when $M$ is 
an interval bundle over a closed surface
(see, for example, \cite{Bow,Gold,SS,CS}) or a handlebody
(see, for example, \cite{minsky-primitive,Tan}). In this paper, we initiate a study of
this action for general hyperbolizable 3-manifolds.

We also study the topological quotient
$$AI(M) = AH(M) / {\rm Out}(\pi_1(M))$$
which we may think of as the moduli space of unmarked hyperbolic
3-manifolds homotopy equivalent to $M$.
The space $AH(M)$ is a rather pathological topological object itself, often
failing to even be locally connected (see Bromberg \cite{bromberg-PT} and
Magid \cite{magid}). However, since $AH(M)$ is a closed subset of an open
submanifold of the character variety, it does retain many nice topological
properties.  We will see that the topology of $AI(M)$ can be significantly more
pathological.

The first hint that the dynamics of ${\rm Out}(\pi_1(M))$ on $AH(M)$ are
complicated, was Thurston's  \cite{thurston2} proof that if $M$ is 
homeomorphic to \hbox{$S\times I$}, then there are infinite order elements of ${\rm Out}(\pi_1(M))$
which have fixed points in $AH(M)$. (These elements are pseudo-Anosov mapping
classes.)
One may further show that $AI(S\times I)$ is not even
$T_1$, see \cite{CS} for a closely related result.
Recall that a topological space is $T_1$ if all its points are closed.
On the other hand, we show that in all other cases $AI(M)$ is $T_1$.

\begin{theorem}
\label{T1thm}
Let $M$ be a compact hyperbolizable $3$-manifold with non-abelian fundamental group.  Then the moduli space $AI(M)$ is $T_1$
if and only if $M$ is not an untwisted interval bundle.
\end{theorem}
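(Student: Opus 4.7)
For the forward direction, suppose $M \cong S \times I$. Following Thurston, fix a pseudo-Anosov mapping class $\phi \in \mathrm{Mod}(S) \subset \mathrm{Out}(\pi_1(M))$ and any quasi-Fuchsian representation $\rho \in AH(M)$. By Thurston's Double Limit Theorem \cite{thurston2}, the sequence $\phi^n \cdot \rho$ converges in $AH(M)$ to a doubly degenerate representation $\rho_\infty$ fixed by $\phi$. Since every $\phi^n \cdot \rho$ is geometrically finite while $\rho_\infty$ is not, $\rho_\infty \notin \mathrm{Out}(\pi_1(M)) \cdot \rho$, so the orbit of $\rho$ is not closed in $AH(M)$ and $AI(M)$ fails to be $T_1$.

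For the reverse direction, suppose $M$ is not an untwisted interval bundle. The plan is to show that every orbit $\mathrm{Out}(\pi_1(M)) \cdot \rho$ is closed in $AH(M)$. The starting observation is that every element of this orbit is a discrete faithful representation whose image in $\PSL_2(\IC)$ is conjugate to $\Gamma := \rho(\pi_1(M))$, because $\phi \cdot \rho = \rho \circ \tilde\phi^{-1}$ for any lift $\tilde\phi \in \mathrm{Aut}(\pi_1(M))$. Given $\phi_n \cdot \rho \to \sigma$ in $AH(M)$ with distinct $\phi_n$, I choose conjugators $g_n \in \PSL_2(\IC)$ so that $g_n (\phi_n \cdot \rho) g_n^{-1} \to \sigma$ in $\mathrm{Hom}(\pi_1(M), \PSL_2(\IC))$, and pass to a subsequence so that $g_n \Gamma g_n^{-1}$ converges geometrically to a Kleinian group $\Gamma_\infty \supseteq \sigma(\pi_1(M))$.

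If $\{g_n\}$ stays bounded, then $\Gamma_\infty$ is a conjugate of $\Gamma$ and $\sigma(\pi_1(M))$ is a subgroup of $\Gamma_\infty$ isomorphic to $\pi_1(M)$; when $\pi_1(M)$ is co-Hopfian (e.g., a closed hyperbolic $3$-manifold group or a closed orientable surface group) this forces equality and $\sigma$ lies in the orbit. If $g_n \to \infty$, the basepoint escapes an end of $N_\rho$; by the tameness theorem (Agol, Calegari-Gabai), $\Gamma_\infty$ is either elementary (from a cusp or geometrically finite end) or an orientable surface subgroup of $\Gamma$ (from a simply degenerate end). The elementary case is incompatible with $\pi_1(M)$ being non-abelian. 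In the surface-group case, $\sigma$ gives an embedding of $\pi_1(M)$ into an orientable surface group, so by Stallings' theorem $\pi_1(M)$ is itself an orientable surface group and $M$ an untwisted interval bundle over an orientable surface, contradicting the hypothesis. Twisted interval bundles over non-orientable surfaces are ruled out by the same machinery: a non-orientable surface group cannot embed in an orientable surface group.

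The main technical obstacle is the case of compressible $M$ (handlebodies and compression bodies), where $\pi_1(M)$ is free or a free product of surface groups, and hence neither co-Hopfian nor obstructed from embedding in orientable surface groups—so both algebraic arguments above fail on the nose. Here one must invoke Minsky's theorem \cite{minsky-primitive} that $\mathrm{Out}(F_n)$ acts properly discontinuously on the primitive-stable locus (which contains the interior of $AH(M)$) to handle interior points, together with a geometric-limits analysis using the Canary-McCullough characteristic submanifold decomposition for boundary points. The absence of an untwisted $S \times I$ piece of the characteristic submanifold spanning a full boundary component of $M$ is precisely what rules out the Thurston-style pseudo-Anosov accumulation in this setting, and thus forces the orbit to be closed.
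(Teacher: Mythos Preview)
Your forward direction is essentially the same idea as the paper's (iterate a pseudo-Anosov on a quasi-Fuchsian point and observe that the limit is not in the orbit), so that part is fine.

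The reverse direction, however, has a genuine gap. Your argument splits into the bounded-$g_n$ case, handled by co-Hopficity of $\pi_1(M)$, and the unbounded case, handled by classifying the geometric limit $\Gamma_\infty$. You correctly note that both arguments break down when $M$ is a compression body that is not an interval bundle: $\pi_1(M)$ is then a nontrivial free product (of closed surface groups and possibly a free group), and such groups are \emph{not} co-Hopfian (for instance, send each free generator to its square and fix the surface factors). Your proposed repair---invoke Minsky's primitive-stable theorem together with a characteristic-submanifold geometric-limits analysis---is not a proof. Minsky's result concerns proper discontinuity of $\mathrm{Out}(F_n)$ on an open subset of $X(H_n)$; it says nothing about closedness of individual $\mathrm{Out}(\pi_1(M))$-orbits for compression-body groups, nor does it address boundary points of $AH(M)$, which is exactly where the difficulty lies. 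The final paragraph is a description of tools, not an argument, and there is no indication of how those tools would combine to force the orbit to be closed. Even in the incompressible-boundary case you assert co-Hopficity without justification; this is nontrivial and deserves a reference or argument.

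By contrast, the paper's proof avoids algebraic properties of $\pi_1(M)$ entirely. It fixes $N\in AI(M)$ and a graph $G\subset M$ carrying $\pi_1(M)$, observes that any convergent sequence $\rho_n\in p^{-1}(N)$ yields maps $j_n:G\to N$ of uniformly bounded length $K$, and then argues geometrically that $j_n(G)$ must lie in a fixed compact subset of $N$. When the compact core $R$ is not a compression body this is immediate (no boundary component of $R$ carries $\pi_1$); when $R$ is a compression body one uses the Covering Theorem applied to the cover associated to an incompressible boundary component to trap $j_n(G)$. Compactness then gives only finitely many homotopy classes for $j_n$, hence $\{\rho_n\}$ is eventually constant. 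This argument is both more elementary and complete; you should either carry out the compression-body case in full along your lines (which looks hard) or adopt the paper's geometric approach.
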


We next show that ${\rm Out}(\pi_1(M))$ does not act properly discontinuously
on $AH(M)$ if $M$ contains a primitive essential annulus.
A properly embedded annulus in $M$ is a primitive essential annulus
if it cannot be properly isotoped into the boundary of $M$ and its core
curve generates a maximal abelian subgroup of $\pi_1(M)$.
In particular, if $M$ has compressible boundary
and no toroidal boundary components, then $M$ contains a primitive essential annulus
(see Corollary \ref{cor:compressible_not_T2}).

\begin{theorem}
\label{prop:!T2}
Let $M$ be a compact hyperbolizable $3$-manifold with non-abelian
fundamental group.
If $M$ contains a primitive essential annulus then ${\rm Out}(\pi_1(M))$
does not act properly discontinuously on $AH(M)$. Moreover,
if $M$ contains a primitive essential annulus, then
$AI(M)$ is not Hausdorff.
\end{theorem}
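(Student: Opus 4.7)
The plan is to use a Dehn twist along the primitive essential annulus $A$ to construct, starting from a single convergent sequence $\rho_n \to \rho_\infty$ in $AH(M)$, a second convergent sequence $\tau_A^{k_n}\cdot \rho_n \to \rho'_\infty$ with $[\rho_\infty]\neq[\rho'_\infty]$ in $AI(M)$. Both conclusions of the theorem will then follow at once: the projected sequence $[\rho_n]$ has two distinct limits $[\rho_\infty]$ and $[\rho'_\infty]=[\tau_A^{k_n}\cdot \rho_n]$ in $AI(M)$, so the quotient is not Hausdorff, while infinitely many distinct $\tau_A^{k_n}$ send a fixed compact neighborhood of $\rho_\infty$ into one of $\rho'_\infty$, contradicting proper discontinuity on $AH(M)$.

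Let $\alpha$ denote the core curve of $A$. Since $A$ is essential and primitive, $\alpha$ is non-peripheral and $\langle\alpha\rangle$ is a maximal cyclic subgroup of $\pi_1(M)$, so the Dehn twist $\tau_A$ represents an infinite-order class in $\mathrm{Out}(\pi_1(M))$. In the separating case $\pi_1(M)\cong\pi_1(M_1)*_{\langle\alpha\rangle}\pi_1(M_2)$, $\tau_A$ acts as the identity on $\pi_1(M_1)$ and as conjugation by $\alpha$ on $\pi_1(M_2)$; the non-separating case admits a parallel HNN description. By pinching a boundary curve of $A$ on $\partial M$ via standard geometrically finite deformation theory, I would first produce a geometrically finite $\rho_\infty\in AH(M)$ in which $\rho_\infty(\alpha)$ is accidentally parabolic, and then by unpinching approximate $\rho_\infty$ by $\rho_n\in AH(M)$ in which $\rho_n(\alpha)$ is loxodromic of complex translation length $\lambda_n\to 0$.

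Choosing $k_n\to\infty$ with $k_n\lambda_n$ converging to a nonzero limit, the representation $\tau_A^{k_n}\cdot\rho_n$ coincides with $\rho_n$ on $\pi_1(M_1)$ and equals $\rho_n(\alpha)^{k_n}\rho_n(\cdot)\rho_n(\alpha)^{-k_n}$ on $\pi_1(M_2)$. Although both fixed points of $\rho_n(\alpha)$ converge to the single parabolic fixed point of $\rho_\infty(\alpha)$, an explicit computation in $\mathrm{PSL}_2(\IC)$ shows that after a suitable global conjugation $\rho_n(\alpha)^{k_n}$ converges to a nontrivial loxodromic commuting with $\rho_\infty(\alpha)$. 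Hence $\tau_A^{k_n}\cdot\rho_n$ converges to a representation $\rho'_\infty\in AH(M)$ that agrees with $\rho_\infty$ on $\pi_1(M_1)$ but differs from it on $\pi_1(M_2)$ by a nontrivial conjugation. This is a model instance of the Anderson--Canary wrapping construction.

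The main obstacle is to verify that $[\rho_\infty]\neq[\rho'_\infty]$ in $AI(M)$. The underlying unmarked hyperbolic manifolds are isometric, since they coincide with the geometric limit of the $\rho_n$, so one must rule out the existence of any $\phi\in\mathrm{Out}(\pi_1(M))$ carrying $\rho_\infty$ to $\rho'_\infty$ as marked structures. I would arrange this by selecting $\rho_\infty$ with sufficiently generic conformal data on the conformal boundary so that its stabilizer in $\mathrm{Out}(\pi_1(M))$ is trivial; then any such $\phi$ would have to be induced by a self-homeomorphism of $M$ preserving $A$ and implementing a nontrivial power of $\tau_A$ on the cusped quotient, contradicting the genericity. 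With $[\rho_\infty]\neq[\rho'_\infty]$ established, the construction of the preceding paragraphs yields both assertions of the theorem simultaneously.
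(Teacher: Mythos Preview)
Your overall strategy---produce convergent sequences $\{\rho_n\}$ and $\{\tau_A^{k_n}\cdot\rho_n\}$ with distinct limits in $AI(M)$---is exactly the paper's plan, and the paper even sketches (in a remark) an alternative proof along the pinch--and--twist lines you describe. But your implementation contains a fatal self-contradiction in the last paragraph. You assert that $N_{\rho_\infty}$ and $N_{\rho'_\infty}$ are isometric (both being ``the geometric limit of the $\rho_n$'') and then try to show $[\rho_\infty]\neq[\rho'_\infty]$ in $AI(M)$. These cannot both hold: two points of $AH(M)$ have the same image in $AI(M)$ \emph{exactly} when the underlying oriented hyperbolic manifolds are isometric, since any orientation-preserving isometry $N_{\rho_\infty}\to N_{\rho'_\infty}$ yields $\phi\in{\rm Out}(\pi_1(M))$ with $\phi\cdot\rho_\infty=\rho'_\infty$. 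So if your isometry claim were correct, the non-Hausdorff conclusion would fail. In fact the algebraic limits are \emph{not} the geometric limit; they are (infinite) covers of the drilled manifold $\hat N$ with fundamental group $\pi_1(\hat M)$, $\hat M=M-\mathcal N(\alpha)$, and the whole point is to arrange that these two covers are \emph{not} isometric. The paper does this by choosing the conformal structure on $\partial_c\hat N$ so that the conformal boundaries of the two covers are not conformally equivalent; no ``generic stabilizer'' argument is needed, nor would one help.

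There is also a local error in the middle paragraph: a nontrivial loxodromic cannot commute with the parabolic $\rho_\infty(\alpha)$, since the centralizer of a parabolic in ${\rm PSL}_2(\IC)$ consists of parabolics with the same fixed point. Indeed $\rho_n(\alpha)^{k_n}$ diverges in ${\rm PSL}_2(\IC)$ (its fixed points coalesce while its translation length stays bounded away from $0$), so the convergence of $\tau_A^{k_n}\cdot\rho_n$ cannot be read off from a limit of these powers. The paper sidesteps all of this by working from the cusped side: it builds $\hat N$ first, defines two embeddings $h_0,h_1:M\to\hat M$ giving $\rho_0,\rho_1\in AH(M)$ with $N_{\rho_0}\not\cong N_{\rho_1}$, and then uses the Hyperbolic Dehn Filling Theorem to produce $\beta_n\to{\rm id}$ so that $\rho_{n,i}=\beta_n\circ\rho_i\to\rho_i$ and $\rho_{n,1}=\rho_{n,0}\circ (D_A)_*^n$. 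This gives both conclusions without any delicate limit computations.
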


On the other hand, if $M$ is acylindrical, i.e. has incompressible boundary and
contains no essential annuli, then ${\rm Out}(\pi_1(M))$ is finite
(see Johannson \cite[Proposition 27.1]{johannson}),  so 
${\rm Out}(\pi_1(M))$  acts properly discontinuously on $AH(M)$ and $X(M)$.
It is easy to see that ${\rm Out}(\pi_1(M))$ fails to act properly discontinuously
on $X(M)$ if $M$ is not acylindrical, since it will contain infinite order
elements with fixed points in $X(M)$.

If $M$ is a compact hyperbolizable 3-manifold which
is not acylindrical,  but does not contain
any primitive essential annuli, then ${\rm Out}(\pi_1(M))$ is infinite. However, if, in addition,
$M$ has no toroidal boundary components, we show that
${\rm Out}(\pi_1(M))$ acts properly discontinuously on an open neighborhood of
$AH(M)$ in $X(M)$. In particular, we see that $AI(M)$ is Hausdorff
in this case.

\begin{theorem}\label{thm:intro3}
If  $M$ is a compact hyperbolizable  3-manifold with no primitive
essential annuli
whose boundary has no toroidal boundary components, then there exists an open
${\rm Out}(\pi_1(M))$-invariant
neighborhood $W(M)$ of $AH(M)$ in $X(M)$ such  that ${\rm Out}(\pi_1(M))$ acts properly discontinuously on $W(M)$.
In particular, $AI(M)$ is Hausdorff.
\end{theorem}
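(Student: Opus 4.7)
The plan is to combine Johannson's characteristic submanifold theory with length-function estimates on $X(M)$. Under the hypothesis that $M$ contains no primitive essential annulus, Corollary~\ref{cor:compressible_not_T2} implies $\partial M$ is incompressible; combined with the absence of toroidal boundary components, Johannson's theory gives a characteristic submanifold $\Sigma(M)$ that is a disjoint union of $I$-bundles over essential subsurfaces $F_1,\ldots,F_r\subset\partial M$ and Seifert solid tori of degree at least two (a degree-one solid torus would supply a primitive essential annulus). Let $\gamma_1,\ldots,\gamma_s$ denote the free homotopy classes of cores of the frontier annuli of $\Sigma(M)$. By \cite[Proposition~27.1]{johannson}, a finite-index subgroup $\mathrm{Out}_0(\pi_1(M))\leq \mathrm{Out}(\pi_1(M))$ is generated by Dehn twists along the $\gamma_i$ together with mapping classes supported on the $I$-bundle components of $\Sigma(M)$. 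This factorization is the essential structural tool.

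Next, I define $W(M)\subset X(M)$ by the open conditions: $\mathrm{tr}^2\rho(\gamma_i)$ avoids a small open neighborhood of $[0,4]$ for each $i$, and for every $I$-bundle base $F_j$, the restriction $\rho|_{\pi_1(F_j)}$ lies in an open $\mathrm{MCG}(F_j)$-invariant set $U_j\subset X(F_j)$ on which $\mathrm{MCG}(F_j)$ acts properly discontinuously. The natural candidate for $U_j$ is the quasi-Fuchsian locus $QF(F_j)$, possibly enlarged so that every restriction of a representation in $AH(M)$ lies in $U_j$. Both conditions are $\mathrm{Out}$-invariant (the sets $\{\gamma_i\}$ and $\{F_j\}$ are permuted by $\mathrm{Out}$), open, and contain $AH(M)$ after this enlargement. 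Given a compact $K\subset W(M)$, suppose $\phi_n\in\mathrm{Out}_0$ satisfies $\phi_n(K)\cap K\neq\emptyset$, and write $\phi_n=\tau_n\psi_n$ with $\tau_n=\prod_i D_{\gamma_i}^{a_{i,n}}$ and $\psi_n$ supported on $I$-bundle pieces. The standard Dehn-twist length estimate $\mathrm{Re}\,\ell_\rho(D_{\gamma_i}^a\delta)\geq |a|\,i(\delta,\gamma_i)\,\mathrm{Re}\,\ell_\rho(\gamma_i)-C$, applied to a finite collection of curves $\delta$ chosen so each $\gamma_i$ is crossed essentially, together with the length bounds provided by compactness of $K$ and the identity $\ell_{\phi\rho}(\delta)=\ell_\rho(\phi^{-1}\delta)$, forces the twist exponents $|a_{i,n}|$ to be uniformly bounded. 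Proper discontinuity of $\mathrm{MCG}(F_j)$ on $U_j$ combined with the compactness of the restrictions $\rho|_{\pi_1(F_j)}$ then bounds the $\psi_n$, so only finitely many $\phi_n$ occur. The Hausdorff conclusion for $AI(M)$ then follows.

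The principal obstacle is the correct choice of $U_j$ in the definition of $W(M)$. Convex-cocompact representations in $AH(M)$ restrict to $QF(F_j)$, so on $\iAH$ the condition $\rho|_{\pi_1(F_j)}\in QF(F_j)$ is automatic; however, representations in $\partial AH(M)$ may have degenerate ends facing $F_j$ and hence restrict to singly or doubly degenerate points on $\partial QF(F_j)$, where $\mathrm{MCG}(F_j)$ fails to act properly (pseudo-Anosov classes have fixed points there). Handling these boundary representations requires either enlarging $QF(F_j)$ to a larger $\mathrm{MCG}(F_j)$-invariant set on which the action remains proper, or exploiting the gluing constraints from the complement of $\Sigma(M)$ in $M$ to show that such boundary restrictions occur in a sufficiently controlled way. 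This is the delicate heart of the argument and the step where the full topology of $M$ (not just of the pieces) must enter.
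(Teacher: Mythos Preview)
Your structural outline is sound, but you have misidentified where the difficulty lies, and the gap you flag at the end is in fact not a gap at all for \emph{this} theorem.

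The key topological observation you are missing is Lemma~\ref{primannuli}: if an interval bundle component $\Sigma_i$ of $\Sigma(M)$ is not tiny (base surface not a thrice-punctured sphere or twice-punctured projective plane), then $\Sigma_i$ contains a primitive essential annulus. Since $M$ has none, every interval bundle component of $\Sigma(M)$ is tiny, and $G(\Sigma_i,Fr(\Sigma_i))$ is finite for each such component (Theorem~\ref{Jstructure}). Consequently $G(\Sigma,Fr(\Sigma))$ is finite and $J(M)$ is a finite extension of the abelian group $K(M)$ generated by the annulus twists. Your ``principal obstacle''---finding an $\mathrm{MCG}(F_j)$-invariant open set $U_j$ containing all restrictions from $AH(M)$---simply does not arise: there are no base surfaces $F_j$ with infinite mapping class group. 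The proof therefore reduces entirely to controlling the Dehn twists in $K(M)$.

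For that part, the paper's route is cleaner than your length estimate. Rather than a direct inequality on $\ell_\rho(D_{\gamma_i}^a\delta)$ (which, as stated, is not correct without further hypotheses---consider what happens when the arcs of $\delta$ on the two sides of the annulus are homotopic), the paper introduces for each characteristic collection of annuli $C_j$ a family of \emph{$C_j$-registering} free subgroups $H\subset\pi_1(M)$ (Section~6). Each such $H$ is preserved by $K_j$, the restriction $s_H:K_j\to{\rm Out}(H)$ is injective, and a ping-pong argument (Lemma~\ref{CjSchottky}) shows that for every $\rho\in AH(M)$ one can choose $H$ so that $\rho|_H$ is Schottky, hence primitive-stable. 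One then sets $W(M)=\bigcap_j\bigcup_H r_H^{-1}(PS(H))$ and invokes Minsky's theorem that ${\rm Out}(F_n)$ acts properly discontinuously on the set $PS(H_n)$ of primitive-stable representations. This replaces your ad hoc trace condition and length estimate with a single clean appeal to proper discontinuity on a known domain. The advantage is that primitive-stability is an open condition and the required Schottky restrictions are produced uniformly for all of $AH(M)$ (using that $AH_n(M)=AH(M)$ by Lemma~\ref{allofit}), whereas your estimate would need a separate argument that the curves $\delta$ can be chosen compatibly across $K$.
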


If $M$ is a compact hyperbolizable  3-manifold with no primitive
essential annuli
whose boundary has no toroidal boundary components, then ${\rm Out}(\pi_1(M))$
is virtually abelian (see the discussion in sections \ref{sec:mod}
and \ref{propdisc}). However, we note that the conclusion of
Theorem \ref{thm:intro3} relies crucially on the topology of $M$,
not just the group theory of ${\rm Out}(\pi_1(M))$. In particular, if $M$
is a compact hyperbolizable 3-manifold $M$ with incompressible
boundary, such that every component of its characteristic submanifold is a solid torus,
then ${\rm Out}(\pi_1(M))$ is always virtually abelian, but $M$ may contain
primitive essential annuli, in which case ${\rm Out}(\pi_1(M))$ does
not act properly discontinuously on $AH(M)$.

\bigskip

One may combine Theorems \ref{prop:!T2} and \ref{thm:intro3} to
completely characterize when ${\rm Out}(\pi_1(M))$ acts properly discontinuously
on $AH(M)$ in the case that $M$ has no
toroidal boundary components.

\begin{corollary}\label{thm:intro2}
Let $M$ be a compact hyperbolizable $3$-manifold with no toroidal boundary components and non-abelian fundamental group.
The group ${\rm Out}(\pi_1(M))$ acts properly discontinuously on $AH(M)$ if and only if
$M$ contains no primitive essential annuli. Moreover,
$AI(M)$ is Hausdorff if and only if $M$ 
contains no primitive essential annuli.
\end{corollary}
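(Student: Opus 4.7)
The plan is to package Theorems \ref{prop:!T2} and \ref{thm:intro3} into a single biconditional. Both halves of each ``if and only if'' have already been proved in the body of the paper; the corollary is a matter of observing that the hypotheses of the two theorems line up cleanly under the standing assumption that $M$ has no toroidal boundary components.

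For the ``only if'' direction of each biconditional, I would assume $M$ contains a primitive essential annulus and invoke Theorem \ref{prop:!T2}. Its only hypothesis beyond the presence of such an annulus is that $\pi_1(M)$ be non-abelian, and that is included in the hypothesis of the corollary. The theorem delivers both conclusions simultaneously: ${\rm Out}(\pi_1(M))$ fails to act properly discontinuously on $AH(M)$, and $AI(M)$ is not Hausdorff.

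For the ``if'' direction, I would assume $M$ has no primitive essential annuli. Since the corollary also assumes no toroidal boundary components, the hypotheses of Theorem \ref{thm:intro3} are satisfied. That theorem produces an open ${\rm Out}(\pi_1(M))$-invariant neighborhood $W(M)$ of $AH(M)$ in the character variety $X(M)$ on which ${\rm Out}(\pi_1(M))$ acts properly discontinuously, and states as a consequence that $AI(M)$ is Hausdorff. Proper discontinuity of an action on $W(M)$ restricts to any invariant subspace, because any compact subset $K \subset AH(M)$ is also a compact subset of $W(M)$, so only finitely many elements $g \in {\rm Out}(\pi_1(M))$ satisfy $gK \cap K \neq \emptyset$; hence the action on $AH(M)$ itself is properly discontinuous.

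There is no real obstacle: the two theorems were stated with matching hypotheses precisely so that they combine into this characterization. The only minor point worth noting explicitly is the restriction argument in the last paragraph, which is immediate from the definition of proper discontinuity but is the one place where the stronger statement of Theorem \ref{thm:intro3} (proper discontinuity on an open neighborhood rather than only on $AH(M)$) is used in a mild way.
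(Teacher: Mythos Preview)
Your proposal is correct and matches the paper's approach: the paper simply states that the corollary follows by combining Theorems \ref{prop:!T2} and \ref{thm:intro3}, without giving a separate argument. Your explicit observation that proper discontinuity on $W(M)$ restricts to the invariant subspace $AH(M)$ is the only detail one might want to spell out, and you have done so.
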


It is a consequence of the classical deformation theory of Kleinian groups
(see Bers \cite{bers-survey} or Canary-McCullough \cite[Chapter 7]{CM} for a survey of this theory)
that ${\rm Out}(\pi_1(M))$ acts properly discontinuously on the interior
${\rm int}(AH(M))$ of $AH(M)$. 
If $H_n$ is the handlebody of genus $n\ge 2$, Minsky \cite{minsky-primitive} exhibited an
explicit ${\rm Out}(\pi_1(H_n))$-invariant open subset $PS(H_n)$ of $X(H_n)$ such
that ${\rm int}(AH(H_n))$ is a proper subset of $PS(H_n)$ and
${\rm Out}(\pi_1(H_n))$ acts properly discontinuously on $AH(H_n)$.

If $M$ is a compact
hyperbolizable 3-manifold with incompressible boundary and no toroidal
boundary components, which is not an interval bundle, then we find an open set $W(M)$
strictly bigger than ${\rm int}(AH(M))$  which ${\rm Out}(\pi_1(M))$ acts properly
discontinuosly on.
See Theorem \ref{Pdiscnbhd} and its proof for a more precise
description of $W(M)$. We further observe, see Lemma \ref{allofit}, that $W(M)\cap \partial AH(M)$ is
a dense  open subset of $\partial AH(M)$ in this setting.

\begin{theorem}
\label{openpd}
Let $M$ be a compact hyperbolizable $3$-manifold with nonempty incompressible boundary and no toroidal boundary components, which is not an
interval bundle. Then there exists an open ${\rm Out}(\pi_1(M))$-invariant subset 
$W(M)$ of  $X(M)$ such that ${\rm Out}(\pi_1(M))$  acts properly discontinuously on $W(M)$
and ${\rm int}(AH(M))$ is a proper subset of $W(M)$.
\end{theorem}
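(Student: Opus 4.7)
The plan is to split into two cases depending on whether $M$ contains a primitive essential annulus. If it does not, then Theorem~\ref{thm:intro3} already provides an open ${\rm Out}(\pi_1(M))$-invariant neighborhood $W(M)$ of all of $AH(M)$ on which ${\rm Out}(\pi_1(M))$ acts properly discontinuously; strict containment ${\rm int}(AH(M))\subsetneq W(M)$ then follows from the existence of boundary points of $AH(M)$, which is standard deformation theory given that $M$ has nonempty incompressible boundary and non-abelian fundamental group.

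In the remaining case, $M$ contains primitive essential annuli, and Theorem~\ref{prop:!T2} forbids $W(M)$ from being a neighborhood of all of $AH(M)$. I would construct $W(M)$ using Johannson's characteristic submanifold $\Sigma(M)\subset M$. Since there are finitely many isotopy classes of essential annuli in $\Sigma(M)$, pick representatives $\gamma_1,\ldots,\gamma_k$ of the conjugacy classes of cores of primitive essential annuli, and let
$$V=\{\rho\in X(M):\tr(\rho(\gamma_i))^2\neq 4\text{ for all }i\}.$$
Since ${\rm Out}(\pi_1(M))$ permutes this finite set of conjugacy classes, $V$ is open and ${\rm Out}(\pi_1(M))$-invariant. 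For each interval bundle component $\Sigma_j\subset\Sigma(M)$ with base surface $F_j$, let $U_j\subset X(\pi_1(F_j))$ be an open ${\rm Mod}(F_j)$-invariant set on which ${\rm Mod}(F_j)$ acts properly discontinuously (e.g.\ the quasi-Fuchsian locus together with compatible boundary strata). Using the incompressibility of $\partial M$ to get injections $\pi_1(\Sigma_j)\hookrightarrow\pi_1(M)$ and thus restriction maps $r_j:X(M)\to X(\pi_1(F_j))$, define $W(M)=V\cap\bigcap_j r_j^{-1}(U_j)$.

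To establish proper discontinuity of ${\rm Out}(\pi_1(M))$ on $W(M)$, I would invoke the Johannson/McCullough description of ${\rm Out}(\pi_1(M))$ (up to finite index) as generated by Dehn twists along characteristic annuli together with mapping class groups of the interval bundle components of $\Sigma(M)$. On $V$, orbits under any Dehn twist $\tau_A$ along a characteristic annulus with core $\gamma$ escape to infinity in $X(M)$ because the standard trace identity expresses $\tr(\rho(\tau_A^n(w)))$ as a Chebyshev-type polynomial in $n$ whose leading behavior is controlled by $\tr(\rho(\gamma))^2-4\neq 0$. On each $r_j^{-1}(U_j)$, the corresponding mapping class group factor acts properly discontinuously by construction of $U_j$, for instance via Thurston's theorem or Minsky's refinements. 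One then combines these through the finite-index decomposition. Strict containment ${\rm int}(AH(M))\subsetneq W(M)$ holds because interior points trivially satisfy all constraints, while a geometrically finite boundary point whose new parabolic curve avoids every $\gamma_i$-conjugacy class and whose interval bundle restrictions still lie in the $U_j$'s supplies a point of $W(M)\cap\partial AH(M)$.

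The main obstacle is the combined proper discontinuity verification. Although each factor of ${\rm Out}(\pi_1(M))$ acts properly discontinuously on its own constraint piece, passing to the full group requires careful bookkeeping with Johannson's decomposition and with the fact that the Dehn twist and mapping class factors commute only up to a finite-index subgroup; this finite-index issue forces one to verify that finite-order outer automorphisms do not acquire accumulation points inside $W(M)$, which is precisely why $W(M)$ is defined via open constraints rather than as a neighborhood of $AH(M)$ itself.
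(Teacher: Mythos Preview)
Your overall architecture matches the paper's: pass to the finite-index subgroup $J(M)\subset{\rm Out}(\pi_1(M))$, use the exact sequence $1\to K(M)\to J(M)\to G(\Sigma,Fr(\Sigma))\to 1$ from Johannson's theory, and control the Dehn-twist kernel and the interval-bundle quotient on separate open sets whose intersection is $W(M)$. Your Case~1 is fine. The genuine gaps are in Case~2.

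First, the claim that there are finitely many isotopy classes of primitive essential annuli is false: any interval-bundle component of $\Sigma(M)$ that is not tiny contains infinitely many (Lemma~\ref{primannuli}), so your set $V$ is not well defined. What \emph{is} finite is the set of frontier annuli of $\Sigma(M)$, and it is Dehn twists along these that generate $K(M)$. Second, even after correcting to frontier annuli, the bare condition $\tr(\rho(\gamma_i))^2\neq 4$ on their core curves does not yield proper discontinuity of $K(M)$. When a solid-torus component of $\Sigma(M)$ has $l\ge 2$ frontier annuli with common core $a$, the associated twist group is $K_j\cong\mathbf{Z}^{l-1}$ (the simultaneous twist is inner), and your Chebyshev trace argument only handles one twist direction on one test word; producing enough test words and a uniform growth estimate over a compact $K\subset V$, for arbitrary sequences in $\mathbf{Z}^{l-1}$, is precisely the missing work. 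The paper supplies it by introducing, for each characteristic collection $C_j$, \emph{$C_j$-registering} free subgroups $H\cong F_{l+1}$ into which $K_j$ injects in ${\rm Out}(H)$, and replacing your $V$ by the set $Z(M)$ of $\rho$ for which some such $H$ has $\rho|_H$ primitive-stable; Minsky's theorem that ${\rm Out}(F_n)$ acts properly discontinuously on $PS(H_n)$ then finishes. For the interval-bundle factors the paper likewise uses primitive-stability (the base surfaces have boundary, so each $\pi_1(\Sigma_i)$ is free), and this is what makes both ${\rm Out}(\pi_1(M))$-invariance and the strict inclusion ${\rm int}(AH(M))\subsetneq W(M)$ checkable via Lemma~\ref{allofit}; your ``quasi-Fuchsian locus together with compatible boundary strata'' is not specified enough to carry either of these points.
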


It is conjectured  that if $M$ is an untwisted interval bundle over a closed
surface $S$,
then ${\rm int}(AH(M))$ is the maximal open ${\rm Out}(\pi_1(M))$-invariant subset of $X(M)$ on which ${\rm Out}(\pi_1(M))$ acts properly discontinuously.  One may show that no open
domain of discontinuity can intersect $\partial AH(S\times I)$ (see \cite{michelle}).
Further evidence for this
conjecture is provided by results of Bowditch \cite{Bow},
Goldman \cite{GoldMer}, Souto-Storm
\cite{SS}, Tan-Wong-Zhang \cite{Tan} and Cantat \cite{cantat}.

Michelle Lee \cite{michelle} has recently shown that if $M$ is an twisted
interval bundle over a closed surface, then there exists an open ${\rm Out}(\pi_1(M))$-invariant
subset $W$ of $X(M)$ such that ${\rm Out}(\pi_1(M))$ acts properly discontinuously on
$W$ and ${\rm int}(AH(M))$ is a proper subset of $W$. Moreover, $W$ contains
points in $\partial AH(M)$. As a corollary, she proves that if $M$ has incompressible
boundary and no toroidal boundary components, then there is open
${\rm Out}(\pi_1(M))$-invariant
subset $W$ of $X(M)$ such that ${\rm Out}(\pi_1(M))$ acts properly discontinuously on
$W$, ${\rm int}(AH(M))$ is a proper subset of $W$, and 
$W\cap\partial AH(M)\ne \emptyset$ if and only if $M$ is not an untwisted interval bundle.

\bigskip\noindent
{\bf Outline of paper:} In section 2, we recall background material from topology
and hyperbolic geometry which will be used in the paper.

In section 3, we prove Theorem \ref{T1thm}. The proof
that $AI(S\times I)$ is not $T_1$ follows the arguments in
\cite[Proposition 3.1]{CS} closely.
We now sketch the proof that $AI(M)$ is $T_1$  otherwise.
In this case, let $N\in AI(M)$ and let $R$ be
a compact core for $N$. We show that $N$ is a closed point, by showing
that any convergent sequence $\{\rho_n\}$ 
in the pre-image of $N$ is eventually constant. For all $n$, there exists a homotopy
equivalence $h_n:M\to N$ such that $(h_n)_*=\rho_n$. If $G$ is a graph in $M$
carrying $\pi_1(M)$, then, since $\{\rho_n\}$ is convergent, we can assume that
the length of $h_n(G)$ is at most $K$, for all $n$ and some $K$. But,
we observe that $h_n(G)$ cannot lie entirely in the complement of $R$, if $R$ is
not a compression body.
In this case, each $h_n(G)$ lies in the compact neighborhood of radius $K$ of $R$, so
there are only finitely many possible homotopy classes of maps of $G$.
Thus, there are only
finitely many possibilities for $\rho_n$, so $\{\rho_n\}$ is eventually constant.
The proof in the case that $R$ is a compression body is somewhat more complicated
and uses the Covering Theorem.

In section 4, we prove Theorem \ref{prop:!T2}. Let $A$ be a primitive essential
annulus in $M$. If $\alpha$ is a core curve of $A$, then the complement $\hat M$ of
a regular neighborhood of $\alpha$ in $M$ is hyperbolizable. We consider a
geometrically finite  hyperbolic manifold $\hat N$ homeomorphic to the interior of $\hat M$ 
and use the Hyperbolic Dehn Filling Theorem to produce a convergent
sequence $\{\rho_n\}$ in $AH(M)$ and a sequence $\{\phi_n\}$ of distinct elements of
${\rm Out}(\pi_1(M))$ such that
$\{\rho_n\circ\phi_n\}$ also converges.
Therefore, ${\rm Out}(\pi_1(M))$ does not act properly discontinuously on $AH(M)$.
Moreover, we show that $\{\rho_n\}$ projects to a sequence in $AI(M)$ with two distinct limits,
so $AI(M)$ is not Hausdorff.

In section 5 we recall basic facts about the characteristic submanifold and
the mapping class group of compact hyperbolizable 3-manifolds with incompressible
boundary and
no toroidal boundary components. We identify a finite index subgroup $J(M)$ of
${\rm Out}(\pi_1(M))$ and a projection of $J(M)$ onto the direct product of mapping class groups of
the base surfaces whose kernel $K(M)$ is the free abelian subgroup generated by Dehn twists in frontier annuli
of the characteristic submanifold.

In section 6, we organize the frontier annuli of the characteristic submanifold
into characteristic collections of annuli and describe free subgroups of $\pi_1(M)$
which register the action of the subgroup of ${\rm Out}(\pi_1(M))$ generated by
Dehn twists in the annuli in such a collection.

In section 7, we show that compact hyperbolizable 3-manifolds with compressible
boundary and no toroidal boundary components contain primitive essential annuli.

In section 8, we introduce a subset $AH_n(M)$ of $AH(M)$ which
contains all purely hyperbolic representations. We see that ${\rm int}(AH(M))$ is a 
proper subset of $AH_n(M)$ and that
$AH_n(M)=AH(M)$ if $M$ does not contain any primitive essential annuli.

In section 9, we prove that if $M$ has incompressible boundary and no
toroidal boundary components, but is not an interval bundle, there is
an open neighborhood $W(M)$ of $AH_n(M)$ in $X(M)$ such that 
${\rm Out}(\pi_1(M))$ preserves and acts properly discontinuously on $W(M)$.
Theorems \ref{thm:intro3} and \ref{openpd} are  immediate corollaries.
We finish the outline by sketching the proof in a special case. 

Let $X$ be an acylindrical, compact hyperbolizable 3-manifold
and let $A$ be an incompressible annulus in its boundary.
Let $V$ be a solid torus and let $B_1,\ldots,B_n$ be a collection
of disjoint parallel annuli in $\partial V$ whose core curves 
are homotopic to the $n^{th}$ power of the core curve of $V$
where $|n|\ge 2$.
Let $M_1,\ldots, M_n$ be copies of $X$ and let $A_1,\ldots, A_n$
be copies of $A$ in $M_i$.
We form $M$ by attaching each $M_i$ to $V$ by identifying
$A_i$ and $B_i$. Then $M$ contains no primitive essential annuli,
is hyperbolizable, and ${\rm Out}(\pi_1(M))$
has a finite index subgroup $J(M)$ generated by Dehn twists about
$A_1,\ldots, A_n$. In particular,
$J(M)\cong {\bf Z}^{n-1}.$

In this case, $\{A_1,\ldots,A_n\}$ is the only characteristic collection of annuli.
We say that  a group $H$ registers $J(M)$ if it is freely generated by
the core curve of $V$ and, for each $i$, a curve contained in $V\cup M_i$ 
which is not homotopic into $V$. So $H\cong F_{n+1}$.
There is a natural map $r_H:X(M)\to X(H)$ where $X(H)$ is the
${\rm PSL}_2({\bf C})$-character variety of the group $H$.
Notice that $J(M)$ preserves $H$ and injects into ${\rm Out}(H)$.
Let 
$$\mathcal{S}_{n+1}={\rm int}(AH(H))\subset X(H)$$
denote the space of  Schottky representations (i.e. representations
which are purely hyperbolic and geometrically finite.)
Since ${\rm Out}(H)$ acts properly discontinuously on $\mathcal{S}_{n+1}$,
we see that $J(M)$ acts properly discontinuously on
$$W_H=r_H^{-1}(\mathcal{S}_{n+1})$$
Let $W(M)=\bigcup W_H$ where the union is taken over all
subgroups which register $J(M)$. Notice that $W(M)$ is open
and $J(M)$ acts properly discontinuously on $W(M)$. One may use a ping
pong argument to show that $AH_n(M)\subset W(M)$, see Lemma \ref{CjSchottky}. Johannson's
Classification Theorem is used to show that $W(M)$ is invariant under
${\rm Out}(\pi_1(M))$, see Lemma \ref{controlK}. (Actually,
we define a somewhat larger set, in general, by
using the space of primitive-stable representations in place of Schottky space.)

\bigskip\noindent
{\bf Acknowledgements:} Both authors would like to thank Yair Minsky
for helpful and interesting conversations.
The second author thanks the organizers of the workshop ``Dynamics of $Aut(F_n)$-actions on representation varieties,'' held in Midreshet Sde Boker, Israel in January, 2009. We thank
Michelle Lee, Darryl McCullough and the referee for helpful comments on a preliminary version of this paper.

\section{Preliminaries}\label{sec:prelim}

As a convention, the letter $M$ will denote a compact connected oriented hyperbolizable $3$-manifold with boundary.  We recall that $M$ is said to be hyperbolizable if 
the interior of $M$ admits a complete hyperbolic metric. 
We will use $N$ to denote a hyperbolic $3$-manifold.  
All hyperbolic $3$-manifolds are assumed to be oriented, complete, and connected. 
  
\subsection{The deformation spaces}

Recall that $\PSL_2 ({\bf C})$ is the group of orientation-preserving isometries of
${\bf H}^3$.  Given a $3$-manifold $M$, a discrete, faithful representation
$\rho : \pi_1(M) \to \PSL_2 ({\bf C})$ determines a hyperbolic $3$-manifold
$N_\rho = {\bf H}^3 / \rho(\pi_1(M))$ and a homotopy equivalence
$m_\rho : M \to N_\rho$, called the marking of $N_\rho$. 

We let $D(M)$ denote the set of discrete, faithful representations
of $\pi_1(M)$ into $\PSL_2({\bf C})$. The group $\PSL_2 ({\bf C})$ acts
by conjugation on $D(M)$ and we let
$$AH(M)=D(M)/\PSL_2({\bf C}).$$ 
Elements of $AH(M)$ are hyperbolic 3-manifolds
homotopy equivalent to $M$ equipped with (homotopy classes of) markings.

The space $AH(M)$ is a closed subset of
the character variety
$$X(M)={\rm Hom}_T(\pi_1(M), \PSL_2 ({\bf C}))//\PSL_2({\bf C}),$$
which is the Mumford quotient of the space
${\rm Hom}_T(\pi_1(M), \PSL_2 ({\bf C}))$  of representations
$\rho:\pi_1(M)\to \PSL_2 ({\bf C})$ such that $\rho(g)$ is parabolic if $g\ne id$
lies in a rank two free abelian subgroup of $\pi_1(M)$.
If $M$ has no toroidal boundary components, then
${\rm Hom}_T(\pi_1(M), \PSL_2 ({\bf C}))$ is simply ${\rm Hom}(\pi_1(M), \PSL_2 ({\bf C}))$.
Moreover, each point in $AH(M)$ is a smooth point of $X(M)$
(see Kapovich \cite[Sections 4.3 and 8.8]{Kap} and Heusener-Porti \cite{HP} for more
details on this construction). 

The group $Aut(\pi_1(M))$ acts naturally on ${\rm Hom}_T(\pi_1(M), \PSL_2({\bf C}))$ via
\[ ( \phi \cdot \rho) (\gamma) := \rho (\phi^{-1} (\gamma)).\]
This descends to an action of ${\rm Out}(\pi_1(M))$ on $AH(M)$ and $X(M)$.  This action is not free, and it often has complex dynamics.  Nonetheless, we can define the topological quotient space 
$$AI(M) = AH(M) /{\rm Out}(\pi_1(M)).$$ 
Elements of $AI(M)$ are naturally oriented hyperbolic 3-manifolds homotopy equivalent to
$M$ without a specified marking. 

\subsection{Topological background}

A compact 3-manifold $M$ is said to have {\em incompressible boundary}
if whenever $S$ is a component of $\partial M$, the inclusion map induces an injection of
$\pi_1(S)$ into $\pi_1(M)$. In our setting, this is equivalent to $\pi_1(M)$
being freely indecomposable. A properly embedded annulus $A$  in $M$ is
said to be {\em essential} if the inclusion map induces an injection of $\pi_1(A)$
into $\pi_1(M)$ and $A$ cannot be properly homotoped into $\partial M$ (i.e.
there does not exist a homotopy of pairs of the inclusion $(A,\partial A)\to (M,\partial M)$
to a map with image in $\partial M$). An essential annulus $A$ is said to be {\em primitive} if the image of $\pi_1(A)$ in $\pi_1(M)$ is a maximal abelian subgroup.

If $M$ does not have incompressible boundary, it is said to have
{\em compressible boundary.} The fundamental examples of 3-manifolds
with compressible boundary are compression bodies.
A {\em compression body} is either a handlebody or is formed 
by attaching  $1$-handles to disjoint disks on the 
boundary surface $R \times \{1\}$ of a 3-manifold $R\times [0,1]$ where
$R$ is a closed, but not necessarily connected, surface
(see, for example, Bonahon \cite{Bo}).
The resulting $3$-manifold $C$ (assumed to be connected) will have a single boundary component
$\partial_+ C$ intersecting $R \times \{1\}$, called the positive (or external) boundary of $C$.
If $C$ is not an untwisted interval bundle over a closed surface, then 
$\partial_+C$ is the unique compressible boundary component of $C$.
Notice that the induced homomorphism
$\pi_1(\partial_+ C) \to \pi_1(C)$ is surjective.  In fact, a compact irreducible 3-manifold 
$M$ is  a compression body if and only if there exists a component $S$ of $\partial M$
such that $\pi_1(S)\to\pi_1(M)$ is surjective.

Every compact hyperbolizable 3-manifold can be
constructed from compression bodies and manifolds with incompressible boundary.
Bonahon \cite{Bo} and McCullough-Miller \cite{mccullough-miller} showed that
there exists a neighborhood $C_M$ of
$\partial M$, called the {\em characteristic compression body},
such that each component of $C_M$ is a compression body and 
each component of  $\partial C_M-\partial M$ is incompressible in $M$.

Dehn filling will play a key role in the proof of Theorem \ref{prop:!T2}.
Let $F$ be a toroidal boundary component of  compact 3-manifold $M$
and let $(m,l)$ be  a choice of  meridian and longitude for $F$.
Given a pair $(p,q)$ of relatively
prime integers, we may form a new manifold $M(p,q)$ by attaching a solid torus $V$ 
to $M$ by an orientation-reversing homeomorphism $g\colon\, \partial V \to F$
so that, if $c$ is the meridian of $V$, then $g(c)$ is a $(p,q)$
curve on $F$ with respect to the chosen meridian-longitude system.
We say that $M(p,q)$ is obtained from $M$ by {\em
$(p,q)$-Dehn filling along $F$}.

\subsection{Hyperbolic background}

If $N=\H^3/\Gamma$ is a hyperbolic 3-manifold, then $\Gamma\subset\PSL_2({\bf C})$
acts on $\rs$ as a group of conformal automorphisms. The {\em domain of discontinuity}
$\Omega(\Gamma)$ is the largest open $\Gamma$-invariant subset of $\rs$ on
which $\Gamma$ acts properly discontinuously. Note that $\Omega(\Gamma)$
may be empty. Its complement $\Lambda(\Gamma)=\rs-\Omega(\Gamma)$ is
called the {\em limit set.} The quotient $\partial_cN=\Omega(\Gamma)/\Gamma$
is naturally a Riemann surface called the {\em conformal boundary}.

Thurston's Hyperbolization theorem, see Morgan \cite[Theorem $B'$]{Morgan},
guarantees that if $M$ is compact and hyperbolizable, then there exists
a hyperbolic 3-manifold $N$ and a homeomorphism
$$\psi:M-\partial_TM\to N\cup\partial_cN$$
where $\partial_TM$ denotes
the collection of toroidal boundary components of $M$.

The convex core $C(N)$ of $N$ is the smallest convex submanifold 
whose inclusion into $N$ is a homotopy equivalence.
More concretely, it is obtained as the quotient, by $\Gamma$, of the convex
hull, in $\H^3$, of the limit set $\Lambda(\Gamma)$.
There is a well-defined retraction $r:N\to C(N)$ obtained by taking $x$ to
the (unique) point in $C(N)$ closest to $x$. The nearest point retraction
$r$ is a homotopy equivalence and is ${1\over\cosh s}$-Lipschitz on the complement
of the neighborhood of radius $s$ of $C(N)$. 

There exists a universal constant $\mu$, called the Margulis constant, such that if $\epsilon<\mu$,
then each component of the $\epsilon$-thin part
$$N_{thin(\epsilon)}=\{x\in N\ |\ {\rm inj}_N(x)<\epsilon\}$$
(where ${\rm inj}_N(x)$ denotes the injectivity radius of $N$ at $x$)
is either a metric regular neighborhood of a geodesic or is homeomorphic
to $T\times (0,\infty)$ where $T$ is either a torus or an open annulus
(see Benedetti-Petronio \cite{BP} for example).
The $\epsilon$-thick part of $N$ is defined simply to be the complement
of the $\epsilon$-thin part
$$N_{thick(\epsilon)}=N-N_{thin(\epsilon)}.$$ It is also useful to
consider the manifold $N^0_\epsilon$ obtained from $N$ by
removing the non-compact components of $N_{thin(\epsilon)}$.

If $N$ is a hyperbolic 3-manifold with finitely generated fundamental group,
then it admits a compact core, i.e. a compact submanifold whose
inclusion into $M$ is a homotopy equivalence (see Scott \cite{scott}).
More generally, if $\epsilon<\mu$, then there exists a {\em relative
compact core} $R$ for $N^0_\epsilon$, i.e. a compact core which intersects
each component of $\partial N^0_\epsilon$ in a compact core for
that component
(see Kulkarni-Shalen \cite{KS} or McCullough \cite{McC}).
Let $P=\partial R-\partial N^0_\epsilon$ and let $P^0$ denote
the interior of $P$.
The Tameness Theorem of Agol \cite{agol} and Calegari-Gabai \cite{calegari-gabai}
assures us that we may choose $R$ so that $N^0_\epsilon-R$ is homeomorphic
to $(\partial R-P^0)\times (0,\infty)$. In particular, the ends of $N^0_\epsilon$ 
are in one-to-one correspondence with the components of $\partial R-P^0$.
(We will blur this distinction and simply regard an end as a component
of $N^0_\epsilon-R$ once we have chosen $\epsilon$ and a relative compact
core $R$ for $N^0_\epsilon$.)
We say that an end $U$ of $N^0_\epsilon$ is {\em geometrically finite} if the intersection
of $C(N)$ with $U$ is bounded (i.e. admits a compact closure). $N$ is said
to be geometrically finite if all the ends of $N^0_\epsilon$ are geometrically finite.

Thurston \cite{thurston-notes} showed that if $M$ is a compact hyperbolizable
3-manifold whose boundary is a torus $F$, then all but finitely many
Dehn fillings of $M$ are hyperbolizable. Moreover, as the Dehn surgery
coefficients approach $\infty$, the resulting hyperbolic manifolds ``converge''
to the hyperbolic 3-manifold homeomorphic to ${\rm int}(M)$. If $M$ has other boundary
components, then there is a version of this theorem where one
begins with a geometrically finite hyperbolic 3-manifold homeomorphic to
${\rm int}(M)$  and one is allowed
to perform the Dehn filling while fixing
the conformal structure on the non-toroidal boundary components of $M$.
The proof uses the cone-manifold deformation theory developed by
Hodgson-Kerckhoff \cite{HK} in the finite volume case and extended
to the infinite volume case by Bromberg \cite{bromberg-deform}
and Brock-Bromberg \cite{brock-bromberg}. 
(The first statement of a Hyperbolic Dehn Filling Theorem in the
infinite volume setting was given by Bonahon-Otal \cite{BO}, see also
Comar \cite{comar}.)
For a general statement of the Filling Theorem, and a discussion of its derivation from
the previously mentioned work, see Bromberg \cite{bromberg-PT} or
Magid \cite{magid}.

\medskip\noindent
{\bf Hyperbolic Dehn Filling Theorem:} {\em
Let $M$ be a compact, hyperbolizable $3$-manifold and let $F$ be a toroidal
boundary component of $M$.
Let $N=\H^3/\Gamma$ be a hyperbolic $3$-manifold admitting an
orientation-preserving homeomorphism
$\psi:M-\partial_TM\to N\cup\partial_cN$.
Let $\{(p_n,q_n)\}$ be an infinite sequence of distinct 
pairs of relatively prime integers.

Then, for all sufficiently large $n$, there exists a (non-faithful)  representation
$\beta_n:\Gamma\to {\rm PSL}_ 2({\bf C})$ with discrete image such that
\begin{enumerate}
\item $\{ \beta_n\}$ converges to the identity representation of
$\Gamma$, and
\item if $i_n: M\to M(p_n,q_n)$
denotes the inclusion map, then for each $n$, there exists an
orientation-preserving homeomorphism 
$$\psi_n: M(p_n,q_n)-\partial_TM(p_n,q_n) \to N_{\beta_n}\cup\partial_cN_{\beta_n}$$
such that
$\beta_n\circ \psi_*$ is conjugate to $(\psi_n)_*\circ (i_n)_*$, and
the restriction of $\psi_n\circ i_n\circ \psi^{-1}$ to $\partial_cN$ is
conformal.
\end{enumerate}
}

\section{Points are usually closed}\label{sec:T1}

If $S$ is a closed orientable surface, we showed in \cite{CS} that 
$\mathcal{AI}(S)=AH(S\times I)/{\rm Mod}_+(S)$ is not $T_1$ where ${\rm Mod}_+(S)$ is
the group of (isotopy classes of) orientation-preserving homeomorphisms of $S$. We recall that a topological space is $T_1$ if
all points are closed sets.
Since ${\rm Mod}_+(S)$ is identified with an
index two subgroup of ${\rm Out}(\pi_1(S))$, one also expects that
$AI(S\times I)=AH(S\times I)/{\rm Out}(\pi_1(S))$ is not $T_1$. 

In this section,
we show that if $M$ is an untwisted interval bundle, which also includes
the case that $M$ is a handlebody, then $AI(M)$ is not $T_1$, but that
$AI(M)$ is $T_1$ for all other compact, hyperbolizable 3-manifolds.

\medskip\noindent
{\bf Theorem \ref{T1thm}.} {\em
Let $M$ be a compact hyperbolizable $3$-manifold with non-abelian fundamental group.  Then the moduli space $AI(M)$ is $T_1$
if and only if $M$ is not an untwisted interval bundle.
}

\medskip

\begin{proof}
We first show that $AI(M)$ is $T_1$ if $M$ is not an untwisted interval bundle.
Let $p : AH(M) \to AI(M)$ be the quotient map and let $N$ be a hyperbolic manifold in $AI(M)$.  
We must show that $p^{-1}(N)$ is a closed subset of $AH(M)$. Since $AH(M)$ is
Hausdorff and second countable, it suffices to show that if
$\{ \rho_n \}$ is a convergent sequence in $p^{-1}(N)$, then $\lim \rho_n \in p^{-1}(N)$.

An element $\rho\in p^{-1}(N)$ is a representation such that $N_\rho$ is
isometric to $N$. Let $\{\rho_n\}$ be a convergent sequence of
representations in $p^{-1}(N)$.
Let  $G \subset M$ be a finite graph such that the inclusion map induces a surjection
of $\pi_1(G)$ onto $\pi_1(M)$. 
Each $\rho_n $ gives rise to a homotopy equivalence $h_n:M \to N$, and hence to a map $j_n=h_n|_G : G \to N$, both of which are  only well-defined up to homotopy.
Since $\{ \rho_n \}$ is convergent, there exists $K$ such that $j_n (G)$ has length at 
most $K$ for all $n$, after possibly altering $h_n$ by a homotopy.

Let $R$ be a compact core for $N$.
Assume first that $R$ is not a compression body.  In this case, if $S$ is any component
of $\partial R$, then  the inclusion map does not induce a surjection of $\pi_1(S)$
to $\pi_1(R)$  (see the discussion in section \ref{sec:prelim}).  Since $j_n (G)$ carries the fundamental group it cannot lie entirely outside of $R$.  It follows that $j_n (G)$ lies in the closed neighborhood $\mathcal{N}_K(R)$
of radius $K$ about $R$.   By compactness, there are only finitely many homotopy classes of maps of $G$ into $\mathcal{N}_K(R)$ with total length at most $K$. 
Hence, there are only finitely many different representations among the $\rho_n$, up to conjugacy.  The deformation space $AH(M)$ is Hausdorff, and the sequence
$\{ \rho_n \}$ converges, implying that $\{ \rho_n \}$ is eventually constant.  
Therefore $\lim \rho_n$ lies in the preimage of $N$,
implying that the fiber $p^{-1}(N)$ is closed and that $N$ is a closed point of $AI(M)$.

Next we assume that $R$ is a compression body. If
$R$ were an untwisted  interval bundle, then $M$ would also have to be a untwisted interval
bundle (by Theorems 5.2 and 10.6 in Hempel \cite{hempel}) which we have disallowed. 
So $R$ must have at least one incompressible boundary component and only
one compressible boundary component $\partial_+R$.
We are free to assume that $M$ is homeomorphic to $R$, 
since the definition of $AI(M)$ depends only on the homotopy
type of $M$. Let $D$ denote the union of $R$ and the component of $N-R$ bounded
by $\partial_+R$. Since the fundamental group of a component of $N-D$
never surjects onto $\pi_1(N)$, with respect to the map induced by inclusion, we
see as above that each $j_n(G)$ must intersect $D$, so is contained
in the neighborhood of radius $K$ of $D$.

Recall that there exists $\epsilon_K>0$ so that the distance from the
$\epsilon_K$-thin part of $N$ to the $\mu$-thick part of $N$ is greater
than $K$ (where $\mu$ is the Margulis constant). It follows that $j_n(G)$
must be contained in the $\epsilon_K$-thick part of $N$.

Let $F$ be an incompressible boundary component of $M$.
Then $h_n(F)$ is homotopic to an incompressible boundary component of $R$
(see, for example, the proof of Proposition 9.2.1 in \cite{CM}).
As there are finitely many possibilities, we may pass to a subsequence
so that $h_n(F)$ is homotopic to a fixed boundary component $F'$.
We may choose $G$ so that there is a proper subgraph $G_F \subset G$ 
such that the image of  $\pi_1(G_F)$ in $\pi_1(M)$  (under the inclusion map) is
conjugate to $\pi_1(F)$. 
Let \hbox{$p_F:N_F\to N$} be the covering map associated to $\pi_1(F')\subset\pi_1(N)$. 
Then $j_n |_{G_F}$ lifts to a map $k_n$ of $G_F$ into $N_F$.

Assume first that $F$ is a torus. Then $k_n(G_F)$ must lie in the portion $X$ of $N_F$
with injectivity radius between $\epsilon_K$ and $K/2$, which is compact.
It follows that $j_n(G)$
must lie in the closed neighborhood of radius $K$ of $p_F(X)$.
Since $p_F(X)$ is compact, we may conclude, as in the general case,
that $\{\rho_n\}$ is eventually constant and hence that $p^{-1}(N)$ is closed.

We now suppose that $F$ has genus at least $2$. We first establish that
there exists $L$ such that $k_n(G_F)$ must be contained in a neighborhood
of radius $L$ of the convex core $C(N_F)$. It is a consequence of the
thick-thin decomposition, that if $G'$ is a graph in $N_F$ which carries
the fundamental group then $G'$ must have length at least $\mu$.
We also recall that the nearest point retraction $r_F:N_F\to C(N_F)$ is a homotopy
equivalence which is ${1\over \cosh s}$-Lipschitz on the complement 
of the neighborhood of radius $s$ of $C(N)$.
Therefore, if $k_n(G_F)$ lies outside of $\mathcal{N}_s(C(N_F))$, then
$r_F(k_n(G_F))$  has length at most $K\over \cosh s$.
It follows that $k_n(G_F)$ must intersect the neighborhood of
radius $\cosh^{-1}({K\over\mu})$
of $C(N_F)$, so we may choose $L=K+\cosh^{-1}({K\over\mu})$.
 
If $N_F$ is geometrically
finite, then $X=C(N_F)\cap N_{thick(\epsilon_K)}$ is compact and
$j_n(G)$ must be contained in the neighborhood of radius $L+K$ of $p_F(X)$
which allows us to complete the proof as before.
 
If $N_F$ is not geometrically
finite, we will need to invoke the Covering Theorem to complete the proof.
Let $\tilde F$ denote
the lift of $F'$ to $N_F$. Then $\tilde F$ divides $N_F$ into two components,
one of which, say $A_-$, is mapped homeomorphically to the component of $N-R$
bounded by $F'$. Let $A_+=N_F-A_-$. We may choose a  
a relative compact core $R_F$ for $(N_F)^0_\epsilon$ (for some $\epsilon<\epsilon_K$)
so that $\tilde F$
is contained in the interior of $R_F$. Since $p_F$ is infinite-to-one on each
end of $(N_F)^0_\epsilon$ which is contained in $A_+$, the Covering Theorem
(see \cite{cover} or \cite{thurston-notes})
implies that all such ends are geometrically finite. Therefore,
$$Y=A_+\cap  C(N_F)\cap (N_F)_{thick(\epsilon_K)}$$
is compact.
If we let $Z=A_-\cup Y$, then we see that $k_n(G_F)$ is contained in  the closed neighborhood of radius $L$ about $Z$
(since $C(N_F)\cap N_{thick(\epsilon_K)}\subset Z$).
Therefore, $j_n(G)$ is contained in the closed
$(L+K)$-neighborhood of $ D\cap p_F(Z)=D\cap p_F(Y)$. 
Since $D\cap p_F(Y)$ is compact,
we conclude, exactly as in the previous cases, that $p^{-1}(N)$ is closed.
This case completes the proof that $AI(M)$ is $T_1$ if $M$ is not an untwisted
interval bundle.

\bigskip

We now deal with the case where $M=S\times I$ is an untwisted interval bundle over
a compact surface $S$.  (In the special case that $M$ is a handlebody of genus 2, we choose
$S$ to be the punctured torus.) In our previous paper \cite{CS}, we consider the space
$AH(S)$ of (conjugacy classes of) discrete faithful representations 
$\rho:\pi_1(S)\to \PSL_2({\bf C})$ such
that if $g\in\pi_1(S)$ is peripheral, then $\rho(g)$ is parabolic.
In Proposition 3.1, we use work of Thurston \cite{thurston2} and McMullen 
\cite{mcmullen} to exhibit a sequence $\{\rho_n\}$ in $AH(S)$ which
converges to $\rho\in AH(S)$ such that $\Lambda(\rho)=\rs$,
$\Lambda(\rho_1)\ne \rs$ and
for all $n$ there exists $\phi_n\in {\rm Mod(S)}$
such that $\rho_n=\rho_1\circ\phi_n$. Since $AH(S)\subset AH(S\times I)$
and ${\rm Mod}(S)$ is identified with a subgroup of ${\rm Out}(\pi_1(S))$,
we see that $\{\rho_n\}$ is a sequence in $p^{-1}(N_{\rho_1})$ which converges
to a point outside of $p^{-1}(N_{\rho_1})$. Therefore, $N_{\rho_1}$ is a point
in $AI(S\times I)$ which is not closed.
\end{proof}

\medskip\noindent
{\bf Remark:} One may further show,
as in the remark after Proposition 3.1 in \cite{CS},
that if $N\in AI(S\times I)$ is a degenerate hyperbolic
3-manifold with a lower bound on its injectivity radius, then $N$ is not a closed
point in $AI(S\times I)$. We recall that $N=\H^3/\Gamma$ is degenerate if
$\Omega(\Gamma)$ is connected and simply connected and $\Gamma$ is
finitely generated.

\section{Primitive essential annuli and the failure of proper discontinuity}

In this section, we show that 
if $M$ contains a primitive essential annulus, then ${\rm Out}(\pi_1(M))$ does
not act properly discontinuously on  $AH(M)$. 
We do so by using the Hyperbolic Dehn Filling Theorem to produce
a convergent sequence $\{\rho_n\}$ in $AH(M)$ and a sequence $\{\phi_n\}$  of
distinct element of ${\rm Out}(\pi_1(M))$ such that that $\{\rho_n\circ \phi_n\}$ is also convergent.
The construction is a generalization of a construction of Kerckhoff-Thurston
\cite{KT}. One may also think of the argument as a simple version of the
``wrapping'' construction (see Anderson-Canary \cite{ACpages})
which was also used to show that components of ${\rm int}(AH(M))$
self-bump whenever $M$ contains a primitive essential annulus
(see McMullen \cite{mcmullenCE} and Bromberg-Holt \cite{BH}).

\medskip\noindent
{\bf Theorem \ref{prop:!T2}.} {\em
Let $M$ be a compact hyperbolizable $3$-manifold with non-abelian
fundamental group.
If $M$ contains a primitive essential annulus then ${\rm Out}(\pi_1(M))$
does not act properly discontinuously on $AH(M)$. Moreover,
if $M$ contains a primitive essential annulus, then
$AI(M)$ is not Hausdorff.
}

\begin{proof}
Let $A$ be a primitive essential annulus in $M$ with core curve $\alpha$. 
Let $\hat{M} = M - \mathcal{N}(\alpha)$ where $\mathcal{N}(\alpha)$ is an open
regular neighborhood of $\alpha$. Lemma 10.2 in \cite{ACM} observes
that $\hat M$ is hyperbolizable. Since $\hat M$ is hyperbolizable, Thurston's
Hyperbolization Theorem implies that there
exists a  hyperbolic manifold $\hat N$ and a homeomorphism
$\psi:\hat{M}-\partial_T\hat{M}\to \hat{N}\cup\partial_c\hat{N}$.
The classical deformation theory of Kleinian groups (see Bers \cite{bers-survey} or \cite{CM}) implies
that we may choose any conformal structure on $\partial_c\hat N$.

Let $A_0$ and $A_1$ denote the components of $A\cap\hat M$. Let $M_i$
be the complement in $\hat M$ of a regular neighborhood of $A_i$.
Let $h_i: M \to \hat{M}$ be an embedding with image $M_i$ which
agrees with the identity map off of a (somewhat larger) regular neighborhood of $A$. 

Let $F$ be the toroidal boundary component of $\hat M$ which is
the boundary of $\mathcal{N}(\alpha)$ in $M$. Choose a meridian-longitude
system for $F$ so that the meridian for $F$ bounds a disk in $M$ and the longitude is
isotopic to $A_1\cap F$.
Lemma 10.3 in \cite{ACM} implies that if $i_n:\hat M\to \hat M(1,n)$ is
the inclusion map, then $i_n\circ h_i:M\to \hat M(1,n)$ is homotopic to a
homeomorphism for each $i=1,2$ and all $n\in\IZ$. Moreover,
we may similarly check that $i_n\circ h_1$ is homotopic to $i_n\circ h_0\circ D_A^n$ for all $n$,
where $D_A$ denotes a Dehn twist along $A$. Notice first that $j_n=D_{A_0}^n$ takes
a $(1,0)$-curve on $F$ to a $(1,n)$-curve on $F$, so extends to a homeomorphism
$j_n:M=\hat{M}(1,0)\to\hat{M}(1,n)$. Therefore, since $i_0\circ h_0$ and $i_0\circ h_1$
are homotopic, so are $j_n\circ i_0\circ h_0$ and $j_n\circ i_0\circ h_1$. But,
$j_n\circ i_0\circ h_0$ is homotopic to $i_n\circ h_0\circ D_A^n$ and $j_n\circ i_0\circ h_1=i_n\circ h_1$, which completes the proof that 
$i_n\circ h_1$ is homotopic to $i_n\circ h_0\circ D_A^n$ for all $n$.

Let $\rho_0 = (\psi \circ h_0)_*$ and $\rho_1 = (\psi \circ h_1)_*$.
Since $(h_i)_*$ induces an injection of $\pi_1(M)$ into $\pi_1(\hat M)$,
$\rho_i\in AH(M)$. We next observe that one can choose 
$\hat N$ so that
$N_{\rho_0}$ and $N_{\rho_1}$ are not isometric. 
Let $a_i=A_i\cap(\partial M- \partial_T \hat{M})$ and let $a_i^*$ denote the geodesic representative
of $\psi(a_i)$ in $\partial_c\hat N$.
Notice that for each $i=0,1$ there
is a conformal embedding of $\partial_c\hat{N}-a_i^*$ into $\partial_cN_{\rho_i}$ such that
each component of the complement  of the image of $\partial_c\hat{N}-a_i^*$ is a neighborhood
of a cusp. One may
therefore choose the conformal structure on $\partial_c\hat{N}$ so that
 there is not a conformal homeomorphism from
$\partial_cN_{\rho_0}$  to $\partial_cN_{\rho_1}$.
Therefore, $N_{\rho_0}$ and $N_{\rho_1}$ are not isometric.

Let $\{N_n=N_{\beta_n}\}$ be the sequence of hyperbolic 3-manifolds provided by the
Hyperbolic Dehn Filling Theorem applied to the sequence $\{(1,n)\}_{n\in\IZ_+}$ and
let $\{\psi_n: \hat M(1,n)-\partial_T\hat M(1,n)\to N_n\cup\partial_cN_n\}$ be
the homeomorphisms  such that $\psi_n\circ i_n\circ \psi^{-1}$ is conformal
on $\partial_cN$.
Let 
$$\rho_{n,i}=\beta_n\circ\rho_i$$
for all  $n$ large enough that $N_n$ and $\psi_n$ exist.
Since $\beta_n\circ \psi_*$ is conjugate to $(\psi_n\circ i_n)_*$ 
(by applying part (2) of the Hyperbolic Dehn Filling Theorem) and
$i_n\circ h_i$ is homotopic to a homeomorphism, we
see that $\rho_{n,i}=(\psi_n\circ i_n\circ h_i)_*$ lies in $AH(M)$ for all  $n$ and each $i$.
It follows from part (1) of the Hyperbolic Dehn Filling Theorem 
that $\{\rho_{n,i}\}$ converges to $\rho_i$ for each $i$.
Moreover, $\rho_{n,1} = \rho_{n,0} \circ (D_A)_*^n$ for all $n$,
since  $i_n\circ h_1$ is homotopic to $i_n\circ h_0\circ D_A^n$ for all $n$.
Therefore, ${\rm Out}(\pi_1(M))$ does not act properly discontinuously on $AH(M)$.

Moreover, $\{\rho_{n,0}\}$ and $\{\rho_{n,1}\}$ project to the same sequence in $AI(M)$
and both $N_{\rho_0}$ and $N_{\rho_1}$ are limits of this sequence.
Since $N_{\rho_0}$ and $N_{\rho_1}$ are distinct manifolds in $AI(M)$,
it follows that $AI(M)$ is not Hausdorff.
\end{proof}

\noindent {\bf Remark:}
One can also establish Theorem \ref{prop:!T2} using deformation
theory of Kleinian groups and convergence results of Thurston 
\cite{thurston3}.  This version of the argument follows the same
outline as the proof of Proposition 3.3 in \cite{CS}.

We provide a brief sketch of this argument.
The classical deformation theory
of Kleinian groups (in combination with Thurston's Hyperbolization
Theorem) guarantees that there exists a component $B$ of ${\rm int}(AH(M))$
such that if $\rho\in B$, then there exists a homeomorphism
$\bar h_\rho:M-\partial_TM\to N_\rho\cup\partial_c N_\rho$ and the point
$\rho$ is  determined by the induced conformal structure on $\partial M-\partial_TM$.
Moreover, every possible
conformal structure on $\partial M-\partial_TM$ arises in this manner. 

Let $a_0$ and $a_1$ denote the components of $\partial A$ and let
$t_{a_0}$ and $t_{a_1}$ denote Dehn twists about $a_0$ and $a_1$
respectively. We choose orientations so that $D_A$ induces
$t_{a_0}\circ t_{a_1}$ on $\partial M$. We then let $\rho_{n,0}\in B$
have associated conformal structure $t_{a_1}^n(X)$ and let
$\rho_{n,1}$ have associated conformal structure $t_{a_0}^{-n}(X)$
for some conformal structure $X$ on $\partial M$.
Thurston's convergence results \cite{thurston2,thurston3} can be used to show
that there exists a subsequence $\{ n_j\}$
of $\IZ$ such that $\{\rho_{n_j,0}\}$ and $\{\rho_{n_j,1}\}$ both converge. One
can guarantee, roughly as above, that the limiting hyperbolic manifolds are
not isometric. Moreover, $\rho_{n,1}=\rho_{n,0}\circ (D_A)_*^n$ for all $n$,
so we are the same situation as in the proof above.

\section{The characteristic submanifold and mapping class groups}\label{sec:mod}

In order to further analyze the case where $M$ has incompressible boundary
we will make use of the characteristic submanifold (developed by Jaco-Shalen \cite{JS} and Johannson \cite{johannson}) and the theory of mapping class groups
of 3-manifolds
developed by Johannson \cite{johannson} and extended by
McCullough and his co-authors \cite{VGF,UGF,CM}.

We begin by recalling the definition of the characteristic submanifold,
specialized to the hyperbolic setting. In the general setting, the components
of the characteristic submanifold are interval bundles and Seifert fibred spaces.
In the hyperbolic setting, the only Seifert fibred spaces which occur are
the solid torus and the thickened torus
(see Morgan  \cite[Sec. 11]{Morgan} or Canary-McCullough
\cite[Chap. 5]{CM}).

\begin{theorem}
\label{charprop}
Let $M$ be a compact oriented hyperbolizable $3$-manifold with incompressible boundary.  There exists a codimension zero submanifold
\hbox{$\Sigma(M) \subseteq M$} with frontier $Fr(\Sigma(M)) = \overline{\partial \Sigma(M) - \partial M}$ satisfying the following properties: 
\begin{enumerate}
\item Each component $\Sigma_i$ of $\Sigma(M)$ is
either 
\subitem(i)
an interval bundle  over a compact surface with negative
Euler characteristic which intersects $\partial M$ in its associated $\partial I$-bundle,
\subitem(ii)  a thickened torus such that  $\partial M\cap \Sigma_i$ contains a 
torus, or
\subitem(iii) a solid torus.
\item The frontier $Fr(\Sigma(M))$ is a collection of essential annuli.
\item Any essential annulus or incompressible torus in $M$
is properly isotopic  into $\Sigma(M)$.
\item
If $X$ is a component of $M-\Sigma(M)$, then either $\pi_1(X)$ is non-abelian
or $(\overline{X}, Fr(X))\cong (S^1\times [0,1]\times [0,1], S^1\times [0,1]\times \{0,1\}) $
and $X$ lies between an interval bundle component of
$\Sigma(M)$ and a thickened or solid torus component of $\Sigma(M)$.
\end{enumerate}
\noindent Moreover, such a $\Sigma(M)$ is unique up to isotopy, and is called the 
{\em characteristic submanifold} of $M$.
\end{theorem}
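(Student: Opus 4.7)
The plan is to treat this statement as a specialization of the general Jaco--Shalen--Johannson characteristic submanifold theorem to the hyperbolizable setting, so most of the work lies in translating between settings and restricting the possible component types, not in re-establishing a new decomposition. I would begin by invoking the classical JSJ theorem from Jaco--Shalen \cite{JS} and Johannson \cite{johannson} in its general form: for any compact, orientable, irreducible 3-manifold $M$ with incompressible boundary, there is a codimension-zero submanifold $\Sigma\subseteq M$, unique up to ambient isotopy, whose components are either I-bundles (meeting $\partial M$ in their associated $\partial I$-bundle) or Seifert fibered spaces (meeting $\partial M$ in fibered annuli and tori), with frontier a disjoint union of essential annuli and tori, such that every essential annulus or incompressible torus of $M$ can be properly isotoped into $\Sigma$. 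This immediately yields the uniqueness clause, conclusion (3), and most of (2) (after absorbing any frontier tori into an adjacent thickened-torus component).

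Next I would use hyperbolizability to pin down conclusion (1) by restricting the allowed component types. The key observation is that $\pi_1(M)$ contains no non-peripheral ${\bf Z}^2$, since ${\rm int}(M)$ carries a complete hyperbolic metric and all rank-two abelian subgroups must correspond to toroidal boundary cusps. A Seifert fibered component whose base orbifold has negative Euler characteristic would inject such a non-peripheral ${\bf Z}^2$, so it cannot appear; in the orientable setting the surviving Seifert possibilities reduce to the solid torus and the thickened torus, and a thickened torus can occur only when $\partial M\cap\Sigma_i$ actually contains its torus fiber (otherwise the piece can be merged into a neighboring component, contradicting the maximality built into the JSJ decomposition). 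The same maximality together with incompressibility of $\partial M$ removes any I-bundle whose base has non-negative Euler characteristic --- such pieces either coincide with solid or thickened tori, are absorbable, or produce frontier annuli that fail to be essential. This reduction, in the precise form stated, is carried out in Morgan \cite[Sec.~11]{Morgan} and Canary--McCullough \cite[Chap.~5]{CM}, and I would follow their treatment.

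Finally, for conclusion (4), I would inspect each component $X$ of $M-\Sigma(M)$ directly. Its frontier in $M$ consists of a union of essential frontier annuli of $\Sigma(M)$ together with a compact subsurface of $\partial M$. If $\pi_1(X)$ is abelian, then irreducibility of $X$ combined with incompressibility of its frontier annuli and of $\partial M\cap X$ forces $X\cong S^1\times [0,1]\times [0,1]$ meeting $\partial M$ in $S^1\times [0,1]\times\{0,1\}$. The two frontier annuli of such an $X$ must lie against components of $\Sigma(M)$ of different type: if both lay against I-bundle components, or both against solid/thickened-torus components, then $X$ together with those neighbors could be fused into a single larger piece of the same type, contradicting the maximality of $\Sigma(M)$. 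This leaves precisely the adjacency configuration in (4). The main obstacle throughout is this maximality bookkeeping --- verifying that no candidate enlargement of $\Sigma$ is possible --- which is the core of Johannson's original argument and which I would cite rather than reproduce in detail.
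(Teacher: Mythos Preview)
Your proposal is correct and matches the paper's approach: the paper does not prove this theorem from scratch either, but simply records it as a specialization of the Jaco--Shalen--Johannson theory, citing \cite{JS} and \cite{johannson} for existence, uniqueness, and part (3), \cite[Theorem 5.3.4]{CM} for part (1), and \cite[Theorem 2.9.3]{CM} for part (4). Your sketch fills in a bit more of the reasoning behind the restrictions (why only solid tori, thickened tori, and $I$-bundles over negative Euler characteristic surfaces survive, and why an abelian complementary piece must be a product annulus region sandwiched between pieces of different type), but the underlying strategy---invoke JSJ, then prune using hyperbolizability and maximality---is identical.
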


The existence and the uniqueness of the characteristic submanifold in general follows from The
Characteristic Pair Theorem in \cite{JS} or Proposition 9.4 and Corollary 10.9 
in \cite{johannson}.
Theorem \ref{charprop}(1) follows from \cite[Theorem 5.3.4]{CM}, part (2) follows from (1)
and the definition of the characteristic submanifold, part (3) follows
from \cite[Theorem 12.5]{johannson},  and part (4) follows from \cite[Theorem 2.9.3]{CM}.

Johannson's Classification Theorem \cite{johannson} asserts
that every homotopy equivalence  between compact, irreducible
3-manifolds with incompressible boundary
may be homotoped so that it
preserves the characteristic submanifold and is a homeomorphism on
its complement.  Therefore, the study of ${\rm Out}(\pi_1(M))$ often
reduces to the study of mapping class groups of interval bundles and
Seifert-fibered spaces.

\medskip\noindent
{\bf Johannson's Classification Theorem \cite[Theorem 24.2]{johannson}.}
{\em Let $M$ and $Q$ be irreducible 3-manifolds with incompressible boundary
and let $h:M\to Q$ be a homotopy equivalence. Then $h$ is homotopic to a map
$g:M\to Q$ such that
\begin{enumerate}
\item
$g^{-1}(\Sigma(Q))=\Sigma(M)$,
\item
$g|_{\Sigma(M)}:\Sigma(M)\to \Sigma(Q)$ is a homotopy equivalence,
and
\item
$g|_{\overline{M-\Sigma(M)}}:\overline{M-\Sigma(M)}\to \overline{Q-\Sigma(Q)}$ is
a homeomorphism.
\end{enumerate}
Moreover, if $h$ is a homeomorphism, then $g$ is a homeomorphim.
}

\medskip

We let the mapping class group ${\rm Mod}(M)$ denote the group of isotopy classes of  
self-homeomorphisms of $M$. Since $M$ is irreducible and has  (non-empty) incompressible boundary,  any two homotopic homeomorphisms are isotopic
(see Waldhausen \cite[Theorem 7.1]{Wald}),
so ${\rm Mod}(M)$ is naturally a subgroup of ${\rm Out}(\pi_1(M))$. 
For simplicity,
we will assume that $M$ is a compact hyperbolizable 3-manifold with incompressible
boundary and no toroidal boundary components. 
Notice that this implies that $\Sigma(M)$
contains no thickened torus components.
Let $\Sigma$ be the characteristic submanifold of $M$ and denote its components
by $\{\Sigma_1,\ldots,\Sigma_k\}$.
 
Following McCullough \cite{VGF},
we let ${\rm Mod}(\Sigma_i,Fr(\Sigma_i))$ denote the group of homotopy classes
of homeomorphisms  $h:\Sigma_i\to\Sigma_i$ such that $h(F)=F$ for each
component $F$ of $Fr(\Sigma_i)$. We let $G(\Sigma_i,Fr(\Sigma_i))$ denote
the subgroup consisting of (homotopy classes of) 
homeomorphisms which have representatives which are the identity on $Fr(\Sigma_i)$.
Define
$G(\Sigma, Fr(\Sigma))=\oplus_{i=1}^k G(\Sigma_i,Fr(\Sigma_i))$.  Notice that 
using these definitions, the restriction of  a Dehn twist along a component of $Fr(\Sigma)$ is trivial in $G(\Sigma, Fr(\Sigma))$. 
 
In our case, each $\Sigma_i$ is either an interval bundle over a compact surface $F_i$
with negative Euler characteristic or a solid torus.
If $\Sigma_i$ is a solid torus, then $G(\Sigma_i,Fr(\Sigma_i))$ is finite
(see Lemma 10.3.2 in \cite{CM}). 
If $\Sigma_i$ is an interval bundle over a compact surface $F_i$, then  $G(\Sigma_i,Fr(\Sigma_i))$ is isomorphic to  the group $G(F_i,\partial F_i)$ of
proper isotopy classes of self-homeomorphisms of $F$ which are the identity
on $\partial F$ (see Proposition 3.2.1 in \cite{VGF} and Lemma 6.1 in \cite{UGF}). 
Moreover, $G(\Sigma_i,Fr(\Sigma_i))$ injects into ${\rm Out}(\pi_1(\Sigma_i))$
(see Proposition 5.2.3 in \cite{CM} for example).
We say that $\Sigma_i$ is {\em tiny} if its base surface $F_i$ is either a thrice-punctured
sphere or a twice-punctured projective plane. If $\Sigma_i$ is not tiny, then
$F_i$ contains a 2-sided, non-peripheral homotopically non-trivial simple
closed curve, so $G(\Sigma_i,Fr(\Sigma_i))$ is infinite. If $\Sigma_i$ is tiny,
then $G(\Sigma_i,Fr(\Sigma_i))$ is finite (see Korkmaz \cite{korkmaz} for
the case when $F_i$ is a twice-punctured projective plane).

Let $J(M)$ be the subgroup of ${\rm Mod}(M)$ consisting
of classes represented by homeomorphisms fixing $M-\Sigma$ pointwise.
Lemma 4.2.1 of McCullough \cite{VGF} implies
that $J(M)$ has finite index in ${\rm Mod}(M)$. 
(Instead of $J(M)$, McCullough writes
$\mathcal{K}(M, \Sigma_1, \Sigma_2, \ldots, \Sigma_k)$.)  
Lemma 4.2.2 of McCullough
\cite{VGF} implies that the kernel $K(M)$ of the natural surjective homomorphism
$$p_\Sigma: J(M) \to G(\Sigma, Fr(\Sigma))$$ 
is abelian and is generated by Dehn twists about the annuli in $Fr(\Sigma)$.

We summarize the discussion above in the following statement.

\begin{theorem}
\label{Jstructure}
Let $M$ be a compact hyperbolizable 3-manifold with incompressible
boundary and no toroidal boundary components. Then there is
a finite index subgroup $J(M)$ of ${\rm Mod}(M)$ and an exact sequence
$$1\longrightarrow K(M) \longrightarrow J(M)\overset{p_\Sigma}\longrightarrow G(\Sigma, Fr(\Sigma))
\longrightarrow 1$$ 
such that $K(M)$ is an abelian group generated by Dehn twists about
essential annuli in $Fr(\Sigma)$. 

Suppose that $\Sigma_i$ is a component of $\Sigma(M)$.
If $\Sigma_i$ is a solid torus or a tiny
interval bundle, then $G(\Sigma_i,Fr(\Sigma_i))$ is finite. Otherwise,
$G(\Sigma_i,Fr(\Sigma_i))$ is infinite and injects into ${\rm Out}(\pi_1(\Sigma_i))$.
\end{theorem}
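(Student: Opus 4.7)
The plan is to assemble the statement from the results already quoted in the discussion preceding the theorem, which are mostly drawn from McCullough's work \cite{VGF,UGF} and the structure theory of the characteristic submanifold \cite{johannson,CM}.

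First I would define $J(M)\subset {\rm Mod}(M)$ as the subgroup consisting of (isotopy classes of) homeomorphisms that fix $M-\Sigma$ pointwise; the finiteness of the index $[{\rm Mod}(M):J(M)]$ follows directly from Lemma 4.2.1 of \cite{VGF}, whose proof uses Johannson's Classification Theorem to show that, after passing to a finite-index subgroup, every mapping class has a representative preserving $\Sigma$ componentwise and which is (isotopic to) the identity on $\overline{M-\Sigma}$. Restriction to $\Sigma$ then defines a homomorphism $p_\Sigma: J(M)\to G(\Sigma,Fr(\Sigma))$, where the image lands in $G(\Sigma,Fr(\Sigma))$ rather than just ${\rm Mod}(\Sigma,Fr(\Sigma))$ because, by construction, representatives are the identity on $Fr(\Sigma)\subset\overline{M-\Sigma}$. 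Surjectivity follows by the standard extension procedure: a homeomorphism of $\Sigma$ which is the identity on $Fr(\Sigma)$ extends by the identity on $\overline{M-\Sigma}$ to a self-homeomorphism of $M$.

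Next I would identify $K(M)=\ker p_\Sigma$. An element of $K(M)$ is represented by a self-homeomorphism of $M$ that is the identity off $\Sigma$ and whose restriction to each $\Sigma_i$ is isotopic, through maps fixing $Fr(\Sigma_i)$, to the identity. Applying Lemma 4.2.2 of \cite{VGF} componentwise, such a class is a product of Dehn twists in the frontier annuli of $\Sigma$; conversely every such Dehn twist is clearly in $K(M)$ and commutes with the others since the annuli are disjoint. This gives the exactness of
\[
1\longrightarrow K(M)\longrightarrow J(M)\overset{p_\Sigma}\longrightarrow G(\Sigma,Fr(\Sigma))\longrightarrow 1
\]
and shows $K(M)$ is abelian, generated by Dehn twists about the annuli of $Fr(\Sigma)$.

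Finally, for the structure of the $G(\Sigma_i,Fr(\Sigma_i))$ I would split into cases using Theorem \ref{charprop}(1): since $M$ has no toroidal boundary components, $\Sigma_i$ is either a solid torus or an interval bundle over a compact surface $F_i$ with $\chi(F_i)<0$. In the solid torus case, finiteness of $G(\Sigma_i,Fr(\Sigma_i))$ is Lemma 10.3.2 of \cite{CM}. In the interval bundle case, Proposition 3.2.1 of \cite{VGF} together with Lemma 6.1 of \cite{UGF} identifies $G(\Sigma_i,Fr(\Sigma_i))$ with $G(F_i,\partial F_i)$, the group of proper isotopy classes of self-homeomorphisms of $F_i$ fixing $\partial F_i$ pointwise, and Proposition 5.2.3 of \cite{CM} says this injects into ${\rm Out}(\pi_1(\Sigma_i))$. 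If $F_i$ is neither a thrice-punctured sphere nor a twice-punctured projective plane, it contains a two-sided, non-peripheral, essential simple closed curve, and the Dehn twist about it is an infinite order element of $G(F_i,\partial F_i)$; the tiny cases are finite by classical surface topology (see \cite{korkmaz} for the twice-punctured projective plane). I expect the main bookkeeping obstacle to be simply citing the right statements from \cite{VGF,UGF,CM} since the underlying content is essentially contained there; no new geometric argument is required.
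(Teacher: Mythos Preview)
Your proposal is correct and essentially coincides with the paper's own argument: the theorem is stated there as a summary of the preceding discussion, which invokes exactly the same citations (Lemmas 4.2.1 and 4.2.2 of \cite{VGF} for the finite index of $J(M)$ and the structure of $K(M)$; Lemma 10.3.2 of \cite{CM}, Proposition 3.2.1 of \cite{VGF}, Lemma 6.1 of \cite{UGF}, Proposition 5.2.3 of \cite{CM}, and \cite{korkmaz} for the componentwise analysis of $G(\Sigma_i,Fr(\Sigma_i))$). If anything, you supply slightly more detail than the paper does, e.g.\ the explicit justification of surjectivity of $p_\Sigma$ by extending by the identity on $\overline{M-\Sigma}$.
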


\section{Characteristic collections of annuli}

We continue to assume that $M$ has incompressible boundary
and no toroidal boundary components and that $\Sigma(M)$
is its characteristic submanifold.  In this section, we organize $K(M)$ into
subgroups generated by collections of annuli with homotopic core curves,
called characteristic collection of annuli, 
and define a class of free subgroups of $\pi_1(M)$ which ``register''
these subgroups of $K(M)$.

A {\em characteristic collection of annuli} for $M$ is either a) the collection
of all frontier annuli in a solid torus component of $\Sigma(M)$, or b)
an annulus in the frontier of an interval bundle component of $\Sigma(M)$ which
is not properly isotopic to a frontier annulus of a solid torus component of
$\Sigma(M)$. 

If $C_j$ is a characteristic collection of annuli for $M$,
let $K_j$ be the subgroup of $K(M)$ generated by Dehn twists
about the annuli in $C_j$. Notice that $K_i\cap K_j=\{ id\}$ for $i\ne j$,
since each element of $K_j$ fixes any curve disjoint from $C_j$.
Then $K(M)=\oplus_{j=1}^mK_j$, since every frontier
annulus of $\Sigma(M)$ is properly isotopic to a component of some
characteristic collection of annuli.
Let $q_j:K(M)\to K_j$ be the projection map.  

We next introduce free subgroups of $\pi_1(M)$, called $C_j$-registering subgroups, which are preserved by
$K_j$ and such that $K_j$ acts effectively on the subgroup.

We first suppose that $C_j=Fr(T_j)$ where $T_j$ is 
a solid torus component of $\Sigma(M)$.  Let $\{ A_1,\ldots, A_l\}$
denote the components of $Fr(T_j)$.  
For each $i=1,\ldots,l$, let $X_i$ be
the component of $M-(T_j \cup C_1 \cup C_2 \cup \ldots \cup C_m)$
abutting $A_i$. (Notice that each $X_i$ is either a component of $M-\Sigma(M)$ or properly
isotopic to the interior of an interval bundle component of $\Sigma(M)$.)
Let $a$ be a core curve for $T_j$ and let $x_0$ be a point on $a$.
We say that a
subgroup  $H$ of $\pi_1(M,x_0)$ is  {\em $C_j$-registering}
if it is freely (and minimally) generated by  $a$
and, for each $i=1,\ldots,l$, a loop $g_i$ in $T_j\cup X_i$ based at $x_0$ intersecting $A_i$ exactly twice.  In particular,
every $C_j$-registering subgroup of $\pi_1(M,x_0)$ is isomorphic to $F_{l+1}$.

Notice that a Dehn twist $D_{A_i}$ along any $A_i$ preserves $H$ in $\pi_1(M,x_0)$.
It acts on $H$ by the map $t_i$ which fixes $a$ and $g_m$ for $m \neq i$,
and conjugates $g_i$ by $a^n$ (where the core curve of $A_i$ is homotopic
to $a^n$). Let $s_H:K_j\to {\rm Out}(H)$ be
the homomorphism which takes each $D_{A_i}$ to $t_i$.
Simultaneously twisting along all $l$ annuli induces conjugation by $a^n$, which is an inner automorphism of $H$. Moreover, it is easily checked that $s_H(K_j)$ is isomorphic
to ${\bf Z}^{l-1}$ and is generated
by $t_1,\ldots,t_{l-1}$.
The set $\{a,g_1,\ldots,g_l\}$ may be extended to a generating
set for $\pi_1(M,x_0)$ by appending curves which intersect $Fr(T_j)$ exactly
twice, so $D_{A_1}\circ\cdots D_{A_l}$ acts as conjugation by $a^n$ on all of $\pi_1(M,x_0)$. Therefore,
$K_j$ itself is isomorphic to ${\bf Z}^{l-1}$ and $s_H$ is
injective. (In particular, if $C_j$ is a single annulus in the boundary of a solid torus
component of $\Sigma(M)$, then $K_j$ is trivial and we could have omitted $C_j$.) 

Now suppose that $C_j=\{A\}$ is a frontier annulus of an interval 
bundle component $\Sigma_i$ of
$\Sigma$ which is not properly isotopic into a solid torus component 
of $\Sigma$.  Let $a$ be a core curve for $A$ and let $x_0$ be
a point on $a$. We say that a subgroup $H$ of $\pi_1(M,x_0)$ is
{\em $C_j$-registering} if it is freely (and minimally)
generated by $a$ and two loops $g_1$ and $g_2$ based
at $x_0$ each of whose interiors misses $A$, and which lie in the two
distinct components of $M-(C_1 \cup C_2 \cup \ldots \cup C_m)$ abutting $A$.  In this case, $H$ is isomorphic to $F_3$.  Arguing as above, it follows that $K_j$ is an infinite cyclic subgroup of ${\rm Out}(\pi_1(M))$ and there is an injection  $s_H:K_j\to {\rm Out}(H)$. 

In either situation, if $H$ is a $C_j$-registering group for a characteristic collection of
annuli $C_j$, then we may consider the map 
$$r_H:X(M)\to X(H)$$
simply obtained
by taking $\rho$ to $\rho|_H$.  (Here, $X(H)$ is the ${\rm PSL}_2({\bf C})$-character variety of the abstract
group $H$.)
One easily checks from the description above
that if $\alpha\in K_j$, then $r_H(\rho\circ\alpha)=r_H(\rho)\circ s_H(\alpha)$ for
all $\rho\in X(M)$.
Notice that if $\phi\in K_l$ and
$j\ne l$, then $K_l$ acts trivially on $H$, since each generating curve of $H$
is disjoint from $C_l$. Therefore, $$r_H(\rho\circ\alpha)=r_H(\rho)\circ s_H(q_j(\alpha))$$ for
all $\rho\in X(M)$ and $\alpha\in K(M)$.

We summarize the key points of this discussion for use later:

\begin{lemma}
\label{Kjinject}
Let $M$ be a compact hyperbolizable 3-manifold with incompressible
boundary and no toroidal boundary components.
If $C_j$ is a characteristic collection of annuli for $M$ and
$H$ is a $C_j$-registering subgroup of $\pi_1(M)$,
then $H$ is preserved by each element of $K_j$ and there is a natural
injective homomorphism $s_H:K_j\to {\rm Out}(H)$. Moreover,  if $\alpha\in K(M)$,
then $r_H(\rho\circ\alpha)=r_H(\rho)\circ s_H(q_j(\alpha))$ for all $\rho\in X(M)$.
\end{lemma}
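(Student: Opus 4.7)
The plan is to verify each of the three assertions by direct computation, splitting into the two cases distinguished in the construction of a $C_j$-registering group: the \emph{solid torus case}, in which $C_j$ is the collection of frontier annuli $\{A_1,\ldots,A_l\}$ of a solid torus component $T_j$ of $\Sigma(M)$, and the \emph{interval bundle case}, in which $C_j=\{A\}$ is a single frontier annulus of an interval bundle component. The two arguments run in parallel, and I focus on the solid torus case.

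For the first claim I would represent each generator $D_{A_i}$ of $K_j$ by a Dehn twist supported in a thin collar of $A_i$. Because $x_0$ lies on $a$ in the interior of $T_j$, every such support is disjoint from $x_0$, so $D_{A_i}$ acts as a genuine automorphism of $\pi_1(M,x_0)$. Using the freedom in the definition of a $C_j$-registering group to take each $g_m$ disjoint from $A_i$ for $i\ne m$, a direct inspection of the action on generators shows that $D_{A_i}$ fixes $a$ and every $g_m$ with $m\ne i$, and sends $g_i \mapsto a^n g_i a^{-n}$. This simultaneously establishes that $H$ is preserved and produces the explicit formula for $t_i:=s_H(D_{A_i})$.

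For the second claim I would define $s_H\colon K_j\to{\rm Out}(H)$ as the unique homomorphism extending $D_{A_i}\mapsto t_i$ (which is well defined since the $t_i$ pairwise commute) and then establish injectivity by matching two kernels. Inside the free group $H\cong F_{l+1}$, a normal form argument shows that a product $t_1^{k_1}\cdots t_l^{k_l}$ is inner precisely when $k_1=\cdots=k_l$, in which case it equals conjugation by $a^{nk_1}$; hence $s_H(K_j)$ is free abelian of rank $l-1$. On the side of $\pi_1(M)$, I would extend $\{a,g_1,\ldots,g_l\}$ to a generating set of $\pi_1(M,x_0)$ by adjoining loops crossing $Fr(T_j)$ transversely in exactly two points, whereupon the same local picture of Dehn twisting across $Fr(T_j)$ shows that $D_{A_1}\cdots D_{A_l}$ acts as conjugation by $a^n$ on every generator and is therefore trivial in ${\rm Out}(\pi_1(M))$. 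Thus $K_j$ itself is free abelian of rank $l-1$ with presentation matching that of $s_H(K_j)$, and $s_H$ is injective. The interval bundle case is the cyclic instance in which $K_j=\langle D_A\rangle$, and $s_H(D_A)$ is non-inner because twisting $g_1$ by a nonzero power of $a$ in $F_3=\langle a,g_1,g_2\rangle$ cannot be undone by conjugation.

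For the compatibility formula, write $\alpha=\alpha_1\cdots\alpha_m$ with $\alpha_k\in K_k$, so that $q_j(\alpha)=\alpha_j$. By construction the generators of $H$ lie in $T_j\cup X_1\cup\cdots\cup X_l$, which is disjoint from every $C_k$ with $k\ne j$; hence each such $\alpha_k$ is represented by a homeomorphism fixing $H$ pointwise and therefore acts trivially on $r_H(\rho)\in X(H)$. Consequently $\alpha$ and $\alpha_j$ induce the same element of ${\rm Out}(H)$, and unraveling the definition $r_H(\rho\circ\alpha)=(\rho\circ\alpha)|_H$ yields the required identity in $X(H)$. I expect the main obstacle to lie in the injectivity step of the second claim, where the defining relations in ${\rm Out}(H)$ must be cleanly identified with those already present in ${\rm Out}(\pi_1(M))$; the hypothesis that $M$ has incompressible boundary is used here via Theorem \ref{charprop}(4) to control the pieces of $M-\Sigma(M)$ abutting $Fr(T_j)$ and thereby to guarantee the existence of auxiliary generators of $\pi_1(M)$ crossing $Fr(T_j)$ in exactly two points.
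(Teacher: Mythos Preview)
Your proposal is correct and follows essentially the same approach as the paper: the lemma is stated there as a summary of the preceding discussion in Section~6, and that discussion proceeds exactly as you do---computing the action of each $D_{A_i}$ on the free generators of $H$, observing that $D_{A_1}\cdots D_{A_l}$ is conjugation by $a^n$ both on $H$ and (after extending to a generating set of $\pi_1(M,x_0)$ by curves meeting $Fr(T_j)$ twice) on all of $\pi_1(M)$, thereby identifying both $K_j$ and $s_H(K_j)$ with $\mathbf{Z}^{l-1}$, and finally noting that each $K_k$ with $k\ne j$ fixes the generators of $H$ since they miss $C_k$. Your remark that the delicate point is matching relations in ${\rm Out}(H)$ with those in ${\rm Out}(\pi_1(M))$ is apt, though the paper treats this step as routine.
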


\section{Primitive essential annuli and manifolds with compressible boundary}

In this section we use a result of Johannson \cite{johannson} to show that every compact
hyperbolizable 3-manifolds with compressible boundary and no toroidal boundary
components contains a primitive essential annulus. 
It then follows from Theorem \ref{prop:!T2} that if $M$ has compressible
boundary and no toroidal boundary components, then ${\rm Out}(\pi_1(M))$ fails to
act properly discontinuously on $AH(M)$ and  $AI(M)$ is not
Hausdorff.

We first find indivisible curves in the boundary of compact hyperbolizable
3-manifolds with incompressible boundary and no toroidal boundary components.
We call a curve $a$ in $M$ {\em indivisible} if it generates a maximal
cyclic subgroup of $\pi_1(M)$.

\begin{lemma}
\label{primexists} 
Let $M$ be a compact hyperbolizable 3-manifold with (non-empty) incompressible
boundary. Then, if $F$ is a component of $\partial M$,
there exists an indivisible simple closed curve in $F$.
\end{lemma}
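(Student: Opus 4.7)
The plan is to split on the genus of $F$.

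If $F$ is a torus, I would exploit that $M$ is hyperbolizable, so $\pi_1(M)$ contains no $\IZ^3$ subgroup: the incompressible inclusion then identifies $\pi_1(F)\cong \IZ^2$ with a maximal abelian subgroup of $\pi_1(M)$. Any primitive element of $\pi_1(F)$ is realized by a simple closed curve $a$, and if $a=b^n$ in $\pi_1(M)$ then $b$ commutes with $a$; since centralizers of nontrivial elements in discrete subgroups of $\PSL_2(\IC)$ are abelian, $\langle\pi_1(F),b\rangle$ is abelian. Maximality forces $b\in\pi_1(F)$, and primitivity of $a$ in $\pi_1(F)$ then forces $n=1$.

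If $F$ has genus at least two, I would employ the characteristic submanifold $\Sigma=\Sigma(M)$ of Theorem \ref{charprop}. The central observation is that if an essential simple closed curve $a\subset F$ is divisible in $\pi_1(M)$, say $a=b^n$ with $n\geq 2$, then the associated homotopy provides a $\pi_1$-essential singular annulus in $M$; the annulus theorem of Jaco--Shalen, combined with Theorem \ref{charprop}(3), produces an embedded essential annulus isotopic into $\Sigma$ with one boundary isotopic to $a$ on $F$. Hence every divisible simple closed curve on $F$ is isotopic in $F$ into $F\cap\Sigma$, and it suffices to exhibit an essential simple closed curve on $F$ that is not isotopic into $F\cap\Sigma$.

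If $F\not\subset\Sigma$, then $F\setminus (F\cap\Sigma)$ is a non-empty essential subsurface of $F$ supplying such a curve. If instead $F\subset\Sigma$, then since solid-torus and thickened-torus components of $\Sigma$ meet $\partial M$ only in annuli or tori, $F$ must be a component of the $\partial I$-bundle of some interval-bundle component $\Sigma_i$ with base $F_0$. I would then take $a\subset F$ covering a two-sided essential simple closed curve of $F_0$, so that $a$ is primitive in $\pi_1(\Sigma_i)$; re-running the annulus-theorem argument (any enclosing annulus for a relation $a=b^n$ in $\pi_1(M)$ may be isotoped into the single component $\Sigma_i$, contradicting primitivity of $a$ in $\pi_1(\Sigma_i)$) then promotes primitivity to $\pi_1(M)$. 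The main technical obstacle will be the ``divisibility yields an enclosing annulus'' step: translating $a=b^n$ into a properly embedded essential annulus in $M$ whose boundary we can control well enough to conclude that $a$ itself, and not merely some other curve, is forced to be isotopic into $F\cap \Sigma$. I anticipate this requires a careful combined application of the annulus theorem and Theorem \ref{charprop}(3), with particular attention to the case where the root $b$ is parabolic rather than loxodromic.
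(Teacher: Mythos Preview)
Your overall strategy parallels the paper's, and your torus case is fine. The genuine gap lies exactly where you flag it: the step from ``$a=b^n$ in $\pi_1(M)$'' to ``there is an essential singular annulus enclosing $a$.'' The homotopy witnessing $a = b^n$ gives a map $S^1\times[0,1]\to M$ with only one end on $\partial M$; the other end traces $b^n$ in the interior, and $b$ need not be homotopic into $\partial M$ at all. The annulus theorem requires a \emph{proper} singular annulus $(A,\partial A)\to(M,\partial M)$, so your proposed plan ``annulus theorem plus Theorem~\ref{charprop}(3)'' cannot be carried out as written. Doubling the homotopy back out to $a$ does not help either, since the result is boundary-parallel.

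The paper bypasses this entirely by quoting Johannson's Lemma~32.1 (recorded as Lemma~\ref{divisible}): a divisible essential simple closed curve on $\partial M$ is isotopic into a solid-torus component of $\Sigma(M)$ or is the boundary of an essential M\"obius band in an interval-bundle component. That is precisely the statement you are attempting to reprove. If you want to argue it directly, the right tool is not the annulus theorem but the full \emph{enclosing property} of the characteristic submanifold: the relation $a=b^n$ yields an essential map of the Seifert pair $(V,A')\to(M,\partial M)$, where $V$ is a solid torus with core sent to $b$ and $A'\subset\partial V$ is an $n$-times-winding annulus sent to a neighborhood of $a$ in $F$; Johannson's theory then deforms this pair into $\Sigma(M)$. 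The same correction is needed in your interval-bundle endgame, where you must also verify that the deformation lands in the \emph{same} component $\Sigma_i$ before concluding $b\in\pi_1(\Sigma_i)$. Finally, your case ``$F\not\subset\Sigma$'' tacitly assumes $F\setminus(F\cap\Sigma)$ contains a non-peripheral curve, which can fail when the complementary pieces meet $F$ only in annuli.
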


\begin{proof}{} 
We use a special
case of a result of Johannson \cite{johannson} (see also Jaco-Shalen \cite{JS-peripheral})
which characterizes divisible simple closed curves in $\partial M$.

\begin{lemma}{\rm (\cite[Lemma 32.1]{johannson})}
\label{divisible}
Let $M$ be a compact hyperbolizable  3-manifold with incompressible boundary.
An essential simple closed curve $\alpha$
in $\partial M$  which is not indivisible
is  either isotopic into a solid torus component of $\Sigma(M)$ or
is isotopic to a boundary component of an essential M\"obius band in
an interval bundle component of $\Sigma(M)$.
\end{lemma}

Therefore, if $\Sigma(M)$ is not all of $M$, then any simple closed curve in $F$
which cannot be isotoped into a solid torus or interval bundle component of $\Sigma(M)$ is indivisible.

If $\Sigma(M)=M$, then $M$ is an interval bundle over a closed surface
with negative Euler characteristic and the proof is completed by the following lemma,
whose full statement will be used later in the paper.

\begin{lemma}{}
\label{primannuli}
Let $M$ be a compact hyperbolizable 3-manifold
with no toroidal boundary components.
Let $\Sigma_i$ be an interval bundle component of $\Sigma(M)$ which is not tiny,
then there is a primitive essential annulus (for $M$) contained in $\Sigma_i$.
\end{lemma}

\begin{proof}
Let $F_i$ be the base surface of $\Sigma_i$. Since $\Sigma_i$ is not tiny,
$F_i$ contains a non-peripheral
simple closed curve $a$ which is two-sided and homotopically non-trivial.
Then $a$ is an indivisible curve in $F_i$ and hence in $M$. 
The sub-interval bundle $A$ over $a$ is thus a primitive essential annulus.
\end{proof}
\end{proof}

We are now prepared to prove the main result of the section.

\begin{prop}\label{lem:comp_implies_annulus}
If $M$ is a compact hyperbolizable $3$-manifold with compressible boundary and no toroidal boundary components, then $M$ contains a primitive essential annulus.
\end{prop}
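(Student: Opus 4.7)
The plan is to use the characteristic compression body $C_M$ of $M$. Since $\partial M$ is compressible and has no toroidal components, $C_M$ contains a component $C$ whose positive boundary $\partial_+ C$ is a compressible, genus-at-least-$2$ component of $\partial M$. If $C$ is a handlebody, then $\partial C = \partial_+ C \subset \partial M$, so by connectedness of $M$ we must have $M=C=H_g$ with $g\geq 2$ (using that $\pi_1(M)$ is non-abelian). In this case I would construct a primitive essential annulus explicitly by writing $H_g = V \natural_D H_{g-1}$ as the boundary connect sum of a solid torus $V = S^1 \times D^2$ with a lower-genus handlebody along a disk $D$. Letting $a$ be the core of $V$ (a free basis element of $\pi_1(H_g)\cong F_g$, hence primitive), I realize the conjugacy class of $a$ by two disjoint simple closed curves on $\partial H_g$: a longitude $\alpha$ lying in $\partial V - D$ and a second loop $\alpha'$ obtained by band-summing $\alpha$ with a loop in $\partial H_{g-1}$ passing through another handle, so that $\alpha'$ represents a non-trivial conjugate $\beta a \beta^{-1}$. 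Since $\alpha$ and $\alpha'$ are freely homotopic in $H_g$ but not on $\partial H_g$, the homotopy sweeps out an embedded essential annulus with primitive core $a$.

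If $C$ is not a handlebody, then $\partial_- C$ is non-empty, and no component of $\partial_- C$ is a torus (since $M$ has no toroidal boundary components and closed incompressible tori in the interior of a hyperbolizable $3$-manifold without toroidal boundary are forbidden by cusp correspondence), so every component of $\partial_- C$ has genus at least $2$. I would choose such a component $S\subset\partial_- C$ and use the product structure $C = R\times I\cup (\text{$1$-handles})$, with $S = R_1\times\{0\}$, to lift a simple closed curve $\alpha\subset S$, chosen to avoid the vertical shadows of the $1$-handle attaching disks, to an embedded annulus $A = \alpha\times I\subset C$ with $\alpha\times\{1\}\subset\partial_+ C\subset\partial M$. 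If $S\subset\partial M$, then $A$ is properly embedded in $M$ with boundary on two distinct components of $\partial M$, so it cannot be properly homotoped into $\partial M$. If instead $S$ lies in the frontier $\partial C_M-\partial M$, then I bridge $A$ across the adjacent piece $M' = \overline{M-C}$ using an essential spanning annulus inside the characteristic submanifold $\Sigma(M')$ (whose existence is provided by Theorem \ref{charprop}, since $M'$ has incompressible boundary) to reach a component of $\partial M\cap \partial M'$, yielding a properly embedded essential annulus in $M$.

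The main obstacle is verifying that the core of $A$ is indivisible in $\pi_1(M)$, since indivisibility in $\pi_1(S)$ does not pass directly to $\pi_1(M)$. To resolve this I would observe that $\pi_1(M)$ decomposes, via van Kampen along the incompressible frontier surfaces of $C_M$, as an amalgamation of $\pi_1(C)$ with $\pi_1(\overline{M-C_M})$; Bass--Serre normal form then implies that any divisibility of $\alpha$ in $\pi_1(M)$ would force divisibility either in $\pi_1(C)$ or in $\pi_1(M')$. On the $C$-side, the free-product structure $\pi_1(C)\cong\pi_1(R_1)*\cdots*F_k$ combined with orientability of $R_1$ guarantees that every essential simple closed curve on $S$ is indivisible in $\pi_1(C)$. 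On the $M'$-side, Johannson's Lemma \ref{divisible} applies---$M'$ has incompressible boundary and no toroidal components---and identifies the essential simple closed curves on $S$ that become divisible in $\pi_1(M')$ as precisely those isotopic into solid torus components of $\Sigma(M')$ or arising as boundaries of M\"obius bands in interval bundle components of $\Sigma(M')$. These comprise only finitely many isotopy classes on $S$, so choosing $\alpha$ to avoid them---possible because $S$ has genus at least $2$ and therefore supports infinitely many isotopy classes of indivisible simple closed curves---delivers primitivity of the annulus core in $\pi_1(M)$.
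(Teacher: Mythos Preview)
Your handlebody case and the vertical-annulus construction when $S\subset\partial M$ are essentially fine, but the bridging step contains a genuine gap. When $S$ lies in the frontier $\partial C_M-\partial M$, you propose to extend $A=\alpha\times I$ across $M'=\overline{M-C}$ via an essential spanning annulus in $\Sigma(M')$, citing Theorem~\ref{charprop}. That theorem only describes the structure of the characteristic submanifold; it does not produce an annulus with one boundary curve equal to your chosen $\alpha$ on $S$ and the other on $\partial M\cap\partial M'$, and indeed $\Sigma(M')$ may be empty. Concretely, take any acylindrical hyperbolizable $X$ with two boundary components $F,F'$ of genus at least $2$ and glue a compression body $C$ with $\partial_-C=F$ to $X$ along $F$; then $\partial_+C$ is a compressible boundary component of $M=C\cup_F X$, the frontier surface is $S=F$, and the adjacent piece $X$ contains no essential annuli whatsoever, so no choice of $\alpha$ can be bridged to $\partial M$.

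The paper avoids this by building the annulus entirely inside $C$ with \emph{both} boundary components on $\partial_+C\subset\partial M$: one picks a curve $\alpha'\subset\partial_+C\cap(R\times\{1\})$ meeting the boundary of a $1$-handle attaching disk $D$ in a single point, and lets $\beta\subset\partial_+C$ be disjoint from $\alpha'$ and homotopic to $\alpha'*\partial D$; then $\alpha'$ and $\beta$ cobound an annulus in $R\times\{1\}$ which, pushed into $R\times I$, is essential in $M$. For primitivity one first chooses $\alpha'$ homotopic to a curve on $F$ that is indivisible in the adjacent piece $X$ (such a curve exists by Lemma~\ref{primexists}), and then the free-product splitting $\pi_1(M)\cong\pi_1(X)*H$ gives indivisibility in $\pi_1(M)$ directly. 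This replaces both your bridging construction and your Bass--Serre enumeration of bad isotopy classes.
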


\begin{proof}
We first observe that under our assumptions every
maximal abelian subgroup of $\pi_1(M)$ is cyclic (since every non-cyclic
abelian subgroup of the fundamental group of a compact hyperbolizable
3-manifold is conjugate into the fundamental group of a toroidal
component of $\partial M$, see \cite[Corollary 6.10]{Morgan}). Therefore, in
our case an essential annulus is primitive if and only if its core curve
is indivisible.

We first suppose that $M$ is a compression body. If $M$ is a handlebody, then
it is an interval bundle, so contains a primitive essential annulus by Lemma
\ref{primannuli}.
Otherwise, $M$ is formed from $R\times I$
by appending 1-handles to $R\times \{1\}$, where $R$ is a closed, but not necessarily
connected, orientable surface. Let $\alpha$ be an essential
simple closed curve in
$R\times \{1\}$ which lies in $\partial M$.  Let $D$ be a
disk in $R\times \{1\}-\partial M$. We may assume that $\alpha$ intersects $\partial D$ in exactly one point.  Let $\beta \subset (\partial M\cap R\times\{1\})$ be a simple closed curve homotopic
to $\alpha \ast \partial D$ (in $\partial M$) and disjoint from $\alpha$.
Then $\alpha$ and $\beta$ bound an embedded annulus in $R\times \{1\}$,
which may be homotoped to a primitive essential annulus in $M$ (by pushing
the interior of the annulus into the interior of $R\times I$).

If $M$ is not a compression body, 
let $C_M$ be a characteristic compression body  neighborhood of $\partial M$
(as discussed in Section \ref{sec:prelim}).
Let $C$ be a component of $C_M$ which has a compressible
boundary component $\partial_+C$ and an incompressible boundary component $F$.
Let $X$ be the component of $\overline{M-C_M}$ which 
contains $F$ in  its boundary and let $\alpha$ be an essential simple closed curve in
$F$ which is indivisible in $X$ (which exists by Lemma \ref{primexists}).
Let $\alpha'$ be a curve in $\partial_+C\subset\partial M$ which is homotopic to 
$\alpha$.
One may then construct as above a primitive essential annulus $A$  in $C$
with $\alpha'$ as one boundary component.
It is clear that $A$ remains essential in $M$.
Since $\pi_1(M)=\pi_1(X)*H$ for some group $H$, the core curve of $A$,
which is homotopic to $\alpha$, is indivisible in $\pi_1(M)$.
Therefore, $A$ is our desired primitive essential annulus in $M$.
\end{proof}

\noindent {\bf Remark:}
The above argument is easily extended to the case where $M$ is allowed
to have toroidal boundary components (but is still hyperbolizable), unless
$M$ is a compression body all of whose boundary components are tori. In fact,
the only counterexamples in this situation occur when $M$ is obtained from
one or two untwisted interval bundles over tori by attaching exactly one 1-handle.

\medskip

We have thus already established Corollary \ref{thm:intro2} in the case that $M$ has
compressible boundary.

\begin{corollary}\label{cor:compressible_not_T2}
If $M$ is a compact hyperbolizable $3$-manifold with compressible boundary,
no toroidal boundary components, and non-abelian fundamental group, 
then  ${\rm Out}(\pi_1(M))$ does not act properly discontinuously on $AH(M)$.
Moreover, the moduli space $AI(M)$ is not Hausdorff.
\end{corollary}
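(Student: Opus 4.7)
The plan is to derive this corollary essentially immediately by combining Proposition \ref{lem:comp_implies_annulus} with Theorem \ref{prop:!T2}. Both hypotheses of Proposition \ref{lem:comp_implies_annulus} are exactly the assumptions we have on $M$: compressible boundary and no toroidal boundary components. (The non-abelian fundamental group hypothesis is not needed for this step, but is needed so that $AH(M)$ and $AI(M)$ are the objects considered in the rest of the paper.) Thus $M$ contains a primitive essential annulus $A$.

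With $A$ in hand, Theorem \ref{prop:!T2} applies directly. Its hypotheses require only that $M$ be a compact hyperbolizable $3$-manifold with non-abelian fundamental group containing a primitive essential annulus, all of which hold. Its conclusions are precisely the two conclusions of the corollary: that ${\rm Out}(\pi_1(M))$ fails to act properly discontinuously on $AH(M)$, and that $AI(M)$ is not Hausdorff.

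Therefore the proof is a single sentence of the form: by Proposition \ref{lem:comp_implies_annulus}, $M$ contains a primitive essential annulus, so Theorem \ref{prop:!T2} gives both conclusions. There is no real obstacle here, since all of the substantive content has already been packaged into the two cited results. The only thing worth verifying is that the non-abelian fundamental group assumption carries through the application of Theorem \ref{prop:!T2} (it does, verbatim). I would present the proof in two lines and then move on.
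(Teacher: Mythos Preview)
Your proposal is correct and is exactly the approach the paper takes: the corollary is stated immediately after Proposition~\ref{lem:comp_implies_annulus} without a separate proof, since it follows at once from that proposition together with Theorem~\ref{prop:!T2}.
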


\section{The space $AH_n(M)$}

In this section, 
we assume that $M$ has incompressible boundary and
no toroidal boundary components. We identify a subset $AH_n(M)$ of
$AH(M)$ which contains all purely hyperbolic representations in $AH(M)$.
We will see later that ${\rm Out}(\pi_1(M))$ acts
properly discontinuously on  an open neighborhood of $AH_n(M)$ in $X(M)$
if $M$ is not an interval bundle.

We define $AH_n(M)$ to be the set of (conjugacy classes of)
representations $\rho\in AH(M)$ such that 
\begin{enumerate}
\item
If $\Sigma_i$ is a component of the characteristic submanifold which
is not a tiny interval bundle, then $\rho(\pi_1(\Sigma_i))$ is purely
hyperbolic (i.e. if $g$ is a non-trivial element of $\pi_1(M)$ which is conjugate into
$\pi_1(\Sigma_i)$, then
$\rho(g)$ is hyperbolic), and
\item
if $\Sigma_i$ is a tiny interval bundle, then $\rho(\pi_1(Fr(\Sigma_i))$ is
purely hyperbolic.
\end{enumerate}

We observe that ${\rm int}(AH(M))$ is a proper subset of $AH_n(M)$ and
that $AH(M)=AH_n(M)$ if and only if $M$ contains no primitive essential annuli.

\begin{lemma}
\label{allofit}
Let $M$ be a compact hyperbolizable 3-manifold with non-empty
incompressible boundary and no toroidal boundary components.  Then
\begin{enumerate}
\item
the interior of $AH(M)$ is  a proper subset of $AH_n(M)$, 
\item $AH_n(M)$ contains a dense subset of $\partial AH(M)$,
and
\item
$AH_n(M)=AH(M)$ if and only if $M$ contains no
primitive essential annuli.
\end{enumerate}
\end{lemma}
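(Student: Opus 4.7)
The plan is to treat the three claims in order.

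For (1), the inclusion $\mathrm{int}(AH(M))\subseteq AH_n(M)$ is immediate from classical deformation theory (Bers \cite{bers-survey}, Canary--McCullough \cite{CM}): since $M$ has no toroidal boundary components, $\pi_1(M)$ has no rank-two free abelian subgroups, so the interior consists of geometrically finite purely hyperbolic discrete faithful representations, which satisfy both conditions defining $AH_n(M)$ vacuously. For strict containment I need to produce $\rho\in AH_n(M)\cap\partial AH(M)$, splitting into two cases. If $M$ is not itself a non-tiny interval bundle, then $\Sigma(M)\subsetneq M$, so there is a non-characteristic piece $X$ with non-abelian fundamental group whose intersection with $\partial M$ is an essential subsurface; I choose an essential simple closed curve $\gamma\subset\partial M\cap X$ not isotopic into any frontier annulus of $\Sigma(M)$, whose free homotopy class in $\pi_1(M)$ is therefore carried by no $\pi_1(\Sigma_i)$. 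Pinching $\gamma$ via Thurston's Hyperbolization Theorem with parabolic locus yields a geometrically finite $\rho\in\partial AH(M)$ having $[\gamma]$ as its only parabolic conjugacy class; since $[\gamma]$ meets no sensitive subgroup, $\rho\in AH_n(M)$. If $M$ is itself a non-tiny interval bundle over a closed surface, then by Thurston's construction \cite{thurston2,thurston3} there exist purely hyperbolic geometrically infinite representations (singly or doubly degenerate surface groups without parabolics) in $\partial AH(M)$, which lie in $AH_n(M)\cap\partial AH(M)$.

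For (2), the set $AH_n(M)$ is cut out by countably many open inequalities on $X(M)$: for each non-trivial $g$ lying in a sensitive subgroup, the trace locus $P_g=\{\rho\in X(M):\tr^2\rho(g)=4\}$ is a proper complex subvariety of $X(M)$, and $AH_n(M)=AH(M)\setminus\bigcup_g P_g$. Fixing $\rho_0\in\partial AH(M)$ and a neighborhood $U$, the plan is to approximate $\rho_0$ by geometrically finite boundary representations $\sigma_n$ (using standard density results for boundary points of deformation spaces, cf.\ \cite{bromberg-PT, magid}), and then for each $\sigma_n$ to deform within its geometrically finite stratum of $\partial AH(M)$ by replacing each of its finitely many sensitive parabolic curves by a nearby non-sensitive essential simple closed curve in $\partial M$, producing $\sigma_n'\in AH_n(M)\cap\partial AH(M)$ arbitrarily close to $\sigma_n$. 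A diagonal extraction then yields a sequence in $AH_n(M)\cap\partial AH(M)$ converging to $\rho_0$. The main obstacle is engineering this perturbation so that it remains in $\partial AH(M)$ while simultaneously eliminating every sensitive parabolic, which relies on careful use of the Ahlfors--Bers deformation theory of geometrically finite Kleinian groups; in the special case that $M$ is itself a non-tiny interval bundle, the argument simplifies to the density of purely hyperbolic representations in $\partial AH(M)$, which follows from Thurston's construction of degenerate surface groups.

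For (3)($\Leftarrow$), suppose $M$ contains no primitive essential annulus and take $\rho\in AH(M)$. Were $\rho(g)$ parabolic for some non-trivial $g\in\pi_1(M)$, then $[g]$ would label a rank-one cusp of $N_\rho$; by the Tameness Theorem and the relative compact core construction recalled in Section \ref{sec:prelim}, this cusp yields an essential annulus in a relative compact core of $N_\rho$ whose core curve is primitive in $\pi_1(M)$ (the absence of toroidal boundary forces every parabolic subgroup to be rank one, hence generated by a primitive element). Pulling back via the marking $m_\rho$ gives a primitive essential annulus in $M$, contradicting our hypothesis; hence every $\rho\in AH(M)$ is purely hyperbolic and so lies in $AH_n(M)$, giving $AH_n(M)=AH(M)$. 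For ($\Rightarrow$), suppose $A\subset M$ is a primitive essential annulus with core $\alpha$. By Theorem \ref{charprop}(3), $A$ is properly isotopic into some component $\Sigma_i$ of $\Sigma(M)$, so $\alpha\in\pi_1(\Sigma_i)$; since $\langle\alpha\rangle$ is maximal cyclic, we may pinch $\alpha$ via the Hyperbolization Theorem with parabolic locus to obtain $\rho\in AH(M)$ with $\rho(\alpha)$ parabolic. If $\Sigma_i$ is not a tiny interval bundle, $\rho$ violates condition (1). If $\Sigma_i$ is a tiny interval bundle, a direct analysis of essential annuli in the tiny base surface (a thrice-punctured sphere or a twice-punctured projective plane) shows $A$ is properly isotopic to a frontier annulus of $\Sigma_i$, so $\alpha$ is conjugate into $\pi_1(Fr(\Sigma_i))$ and $\rho$ violates condition (2). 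Either way $\rho\in AH(M)\setminus AH_n(M)$.
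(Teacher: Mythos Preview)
Your argument for (3)($\Leftarrow$) contains a genuine error. You claim that if $M$ has no primitive essential annuli then every $\rho\in AH(M)$ is purely hyperbolic, arguing that any parabolic $\rho(g)$ yields a primitive essential annulus in $M$. This is false. Take $M$ acylindrical: then $\Sigma(M)$ is empty and $AH_n(M)=AH(M)$ holds vacuously, yet pinching any simple closed curve on $\partial M$ produces $\rho\in AH(M)$ with a parabolic. The cusp corresponds to an annulus in the parabolic locus $P\subset\partial R$ of a relative compact core $R$ for $(N_\rho)^0_\epsilon$, but that annulus lies in $\partial R$; it is not a properly embedded essential annulus in $R$, and your ``pullback via the marking'' (which is only a homotopy equivalence) cannot manufacture one. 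The definition of $AH_n(M)$ constrains only elements conjugate into specific pieces of $\Sigma(M)$, so the proof must address exactly those elements. The paper does this: Lemma~\ref{primannuli} and Lemma~\ref{divisible} reduce the problem to showing that the core curve $a$ of each solid torus component $T$ of $\Sigma(M)$ has hyperbolic image. Since the frontier annuli of $T$ are essential but, by hypothesis, not primitive, $a$ is not peripheral in $M$ (via \cite[Theorem 32.1]{johannson}); Johannson's Classification Theorem then transports this to the compact core $R$ of $N_\rho$, showing $h(a)$ is the core of a solid torus component of $\Sigma(R)$ and hence not peripheral in $R$, whereas a parabolic $\rho(a)$ would force $h(a)$ into $P\subset\partial R$.

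For (1) and (2) your approach is workable in outline but far more laborious than necessary, and your treatment of (2) is only a sketch whose ``main obstacle'' is real. The paper handles both at once: Sullivan \cite{sullivan2} gives ${\rm int}(AH(M))\subset AH_n(M)$, and then the single fact that purely hyperbolic representations are dense in $\partial AH(M)$ (from \cite{CH} together with the Density Theorem) yields both proper containment and the required dense subset of $\partial AH(M)$. This bypasses entirely the delicate stratum-by-stratum perturbation you propose.
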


\begin{proof} 
Sullivan \cite{sullivan2} proved that all representations in ${\rm int}(AH(M))$
are purely hyperbolic (if $M$ has no toroidal boundary components), so
clearly ${\rm int}(AH(M))$ is contained in $AH_n(M)$. On the other hand,
$\partial AH(M)$ is non-empty  (see  Lemma 4.1 in Canary-Hersonsky \cite{CH})
and purely hyperbolic representations are dense in $\partial AH(M)$ (which follows from
Lemma 4.2 in \cite{CH} and the Density Theorem \cite{brock-bromberg,BCM,HN,ohshika-density}). This establishes claims (1) and (2).

If $M$ contains a primitive essential annulus $A$, then there exist $\rho\in AH(M)$
such that $\rho(\alpha)$ is parabolic (where $\alpha$ is the core curve of $A$),
so $AH_n(M)$ is not all of $AH(M)$  in this case (see Ohshika \cite{ohshika-parabolic}).

Now suppose that $M$ contains no primitive essential annuli.
We first note that every component of $\Sigma(M)$ is a solid torus or
tiny interval bundle (by Lemma \ref{primannuli}). Moreover, if $\Sigma_i$ is
a tiny interval bundle component of $\Sigma(M)$, then any component $A$
of its frontier must be isotopic to a component of the frontier of
a solid torus component of $\Sigma(M)$. Otherwise, $A$ would be a primitive
essential annulus (by Lemma \ref{divisible}).
Therefore, it suffices to prove that $\rho(\Sigma_i)$ is purely hyperbolic
whenever $\Sigma_i$ is a solid torus component of $\Sigma(M)$.

Let $T$ be a solid torus component of $\Sigma(M)$.
A frontier annulus $A$ of $T$ is an essential annulus in $M$, so it must not be primitive.  It follows that the core curve $a$ of $T$ is not peripheral in $M$
(see \cite[Theorem 32.1]{johannson}).
 
Let $\rho\in AH(M)$ and let $R$ be a relative compact core for
$(N_\rho)^0_\epsilon$ (for some $\epsilon<\mu).$
Let $h:M\to R$ be a homotopy equivalence in the homotopy class determined by $\rho$.
By Johannson's Classification Theorem \cite[Thm.24.2]{johannson}, $h$ may
be homotoped so that $h(T)$ is a component $T'$ of $\Sigma(R)$,
$h|_{Fr(T)}$  is an embedding with image $Fr(T')$
and  $h|_T:(T,Fr(T))\to (T',Fr(T'))$ is a homotopy equivalence of pairs.
It follows that $h(a)$ is homotopic to the core curve of $T'$ which is
not peripheral in $R$.

If $\rho(a)$ were parabolic, then $h(a)$  would be homotopic into a non-compact
component of $(N_\rho)_{thin(\epsilon)}$ and hence into
$P=R\cap \partial( N_\rho)^0_\epsilon\subset\partial R$, so $h(a)$ would be peripheral 
in $R$. It follows that $\rho(a)$ is hyperbolic. Since $a$ generates $\pi_1(T)$, we see
that $\rho(\pi_1(T))$ is purely hyperbolic.
Since $T$ is an arbitrary solid torus component of $\Sigma(M)$,
we see that $\rho\in AH_n(M)$.
\end{proof}

We next check that the restriction of $\rho\in AH_n(M)$ to the fundamental group of an
interval bundle component of $\Sigma(M)$ (which is not tiny) is Schottky.
By definition, a Schottky group is a free, geometrically finite, purely hyperbolic subgroup of $\PSL_2({\bf C})$ (see Maskit \cite{maskit-free} for
a discussion of the equivalence of this definition with more classical definitions).

\begin{lemma} 
\label{IbundleSchottky}
Let $M$ be a compact hyperbolizable 3-manifold with incompressible
boundary with no toroidal boundary components which is not an interval bundle.
If $\Sigma_i$ is an  interval bundle component of $\Sigma(M)$ which is not tiny and
\hbox{$\rho\in AH_n(M)$}, then $\rho(\pi_1(\Sigma_i))$ is a Schottky group.
\end{lemma}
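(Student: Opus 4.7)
My plan is to verify the three defining properties of a Schottky group for $\rho(\pi_1(\Sigma_i))$---free, purely hyperbolic, and geometrically finite---using Maskit's characterization recalled in the paper. Pure hyperbolicity is immediate from condition (1) in the definition of $AH_n(M)$, since $\Sigma_i$ is a non-tiny interval bundle.

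For freeness, I would first check that the base surface $F_i$ of the interval bundle $\Sigma_i$ has nonempty boundary. If $F_i$ were closed, then $\partial \Sigma_i$ would lie entirely in $\partial M$, forcing the connected component $\Sigma_i$ to equal $M$ and contradicting that $M$ is not an interval bundle. Hence $\pi_1(\Sigma_i) \cong \pi_1(F_i)$ is a free group of finite rank. Since the horizontal boundary of $\Sigma_i$ lies in the incompressible $\partial M$ and its frontier annuli are essential, the inclusion $\pi_1(\Sigma_i) \hookrightarrow \pi_1(M)$ is injective, so $\rho|_{\pi_1(\Sigma_i)}$ is discrete and faithful.

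The main step is geometric finiteness of $N_i := \H^3/\rho(\pi_1(\Sigma_i))$. By the Tameness Theorem, $N_i$ is topologically tame; since $\rho(\pi_1(\Sigma_i))$ has no parabolics, any geometrically infinite end of $N_i$ must be simply degenerate. Assume for contradiction that such an end $E$ exists, and apply the Covering Theorem to the covering $p \colon N_i \to N_\rho$. The alternative in which $N_\rho$ has finite volume (virtually fibering over the circle) is excluded because a relative compact core of $N_\rho$ is homotopy equivalent to $M$, and $M$ has nonempty non-toroidal boundary. Therefore $E$ has a neighborhood $U$ on which $p$ is finite-to-one, with $p(U)$ a neighborhood of a simply degenerate end $E'$ of $N_\rho$.

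Finally I would exploit the topology of $\partial M$: since $M$ has incompressible boundary with no toroidal components, the cross-sectional surface $S'$ of $E'$ is a closed surface of genus at least $2$ whose fundamental group injects into $\pi_1(N_\rho)$. The finite cover $U \to p(U)$ induces a finite cover $S_E \to S'$ on cross-sections, so $\pi_1(S_E)$ is itself a closed surface group of genus at least $2$. Chasing through the diagram, the composition $\pi_1(S_E) \to \pi_1(N_i) \to \pi_1(N_\rho)$ is an injection onto a finite-index subgroup of $\pi_1(S')$, which forces $\pi_1(S_E) \hookrightarrow \pi_1(\Sigma_i)$ to be injective. This contradicts the fact that subgroups of free groups are free. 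I expect the main obstacle to be the careful application of the Covering Theorem and confirming that its finite-volume alternative is genuinely ruled out by the boundary hypothesis on $M$.
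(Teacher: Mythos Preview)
Your argument is correct in spirit and reaches the same conclusion, but it takes a longer route than the paper and one of your justifications is misplaced. The paper's proof is more direct: having observed that $\pi_1(\Sigma_i)$ is free, it notes that a compact core $R_i$ for $N_i$ is an irreducible manifold with free fundamental group, hence a handlebody, so $N_i$ has exactly \emph{one} end. Since $\pi_1(\Sigma_i)$ has infinite index in $\pi_1(M)$, the covering $N_i\to N_\rho$ has infinite degree and is therefore infinite-to-one on that unique end. The Covering Theorem then forces the end to be geometrically finite, and we are done---no analysis of the end structure of $N_\rho$ is required.

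By contrast, you assume a geometrically infinite end $E$ of $N_i$, push it down via the Covering Theorem to a geometrically infinite end $E'$ of $N_\rho$, and then extract a closed surface subgroup of the free group $\pi_1(\Sigma_i)$ from the finite covering of cross-sections. This works, but your stated reason that $S'$ is closed of genus at least $2$ (``since $M$ has incompressible boundary with no toroidal components'') is not the right one: $N_\rho$ may well have parabolics, so an end of $(N_\rho)^0_\epsilon$ can in principle have a cross-section with boundary. The correct reason is upstairs: $N_i$ has no parabolics, so $S_E$ is closed, and a finite cover $S_E\to S'$ forces $S'$ to be closed as well. You then still need that $\pi_1(S')$ injects into $\pi_1(N_\rho)$, which does follow from the incompressibility of $\partial M$ (the relative compact core of $N_\rho$ inherits incompressible boundary, and components of $\partial R - P^0$ are $\pi_1$-injective). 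With these adjustments your argument goes through, though the paper's single-end observation sidesteps all of this bookkeeping.
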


\begin{proof}
By definition $\rho(\pi_1(\Sigma_i))$ is purely hyperbolic,
so it suffices to prove it is free and geometrically finite.
Since $\Sigma_i$ is an interval bundle whose base surface $F_i$ has
non-empty boundary, $\pi_1(\Sigma_i)\cong\pi_1(F_i)$ is free.
Let $\pi_i:N_i\to N_\rho$ be the cover of $N_\rho$ associated to 
$\rho(\pi_1(\Sigma_i))$. 
Since $\pi_1(\Sigma_i)$ has infinite index in $\pi_1(M)$,
$\pi_i:N_i\to N$ is a covering with infinite degree.
Let $R_i$ be a compact core for $N_i$. Since $\pi_1(R_i)$ is free and $R_i$ is irreducible,
$R_i$ is a handlebody (\cite[Theorem 5.2]{hempel}).
Therefore, $N_i=(N_i)^0_\epsilon$ has
one end and $\pi_i$ is infinite-to-one on this end, so the Covering Theorem 
(see \cite{cover}) implies that this end is geometrically finite, and hence that
$N_i$ is geometrically finite. Therefore, $\rho(\pi_1(\Sigma_i))$ is geometrically finite,
completing the proof that it is a Schottky group.
\end{proof}

Finally, we check that if $\rho\in AH_n(M)$  and $C_j$ is a characteristic
collection of annuli, then there exists a $C_j$-registering subgroup
whose image under $\rho$ is Schottky.

\begin{lemma}
\label{CjSchottky}
Suppose that $M$ is a compact hyperbolizable 3-manifold with incompressible
boundary and no toroidal boundary components
and $C_j$ is a characteristic collection of frontier annuli for $M$.
If $\rho\in AH_n(M)$, then there exists a $C_j$-registering subgroup $H$ of
$\pi_1(M)$ such that $\rho(H)$ is a Schottky group.
\end{lemma}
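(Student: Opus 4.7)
The strategy is a ping-pong argument on $\rs=\partial\H^3$ that produces the generators of $H$ explicitly. Since $\rho\in AH_n(M)$, the element $\rho(a)$ is hyperbolic: in case (a), $a$ generates $\pi_1(T_j)$ for a solid torus component $T_j$ of $\Sigma(M)$, so (as $T_j$ is not a tiny interval bundle) condition (1) in the definition of $AH_n(M)$ applies; in case (b), $a$ is the core of a frontier annulus of an interval bundle component $\Sigma_k$ of $\Sigma(M)$, and condition (1) or (2) applies according to whether $\Sigma_k$ is tiny. Write $p^\pm\in\rs$ for the fixed points of $\rho(a)$.

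For each complementary component $X_i$ appearing in the definition of a $C_j$-registering subgroup, the image $\rho(\pi_1(X_i))$ is a non-elementary Kleinian group: if $X_i$ is properly isotopic to a non-tiny interval bundle component of $\Sigma(M)$ then $\rho(\pi_1(X_i))$ is Schottky by Lemma \ref{IbundleSchottky}, and otherwise $\pi_1(X_i)$ is torsion-free and non-abelian (invoking Theorem \ref{charprop}(4) when $X_i\subset M-\Sigma(M)$), which forces its discrete faithful image to be non-elementary. Consequently, the fixed points of hyperbolic elements of $\rho(\pi_1(X_i))$ form a dense subset of its limit set, giving substantial freedom when selecting elements.

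Fix paths $\sigma_i$ from $x_0$ crossing $A_i$ transversely exactly once into the interior of $X_i$ (in case (b), simply an initial segment of a loop at $x_0\in A$ going into $X_i$), fix any Schottky pair $(D_0^-,D_0^+)$ for $\rho(a)$, and choose elements $\gamma_i\in\pi_1(X_i)$ such that $\rho(\gamma_i)$ is hyperbolic with fixed points $q_i^\pm$ arranged so that all the points $\rho(\sigma_i)(q_i^\pm)$ lie in $\rs\setminus(\overline{D_0^+}\cup\overline{D_0^-})$ and are pairwise distinct; such choices exist by density of hyperbolic fixed points in the limit set. Set $g_i=\sigma_i\,\gamma_i^{N_i}\,\sigma_i^{-1}\in\pi_1(M,x_0)$ for positive integers $N_i$ to be chosen. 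By construction $g_i$ lies in $T_j\cup X_i$ (respectively, is a loop at $x_0\in A$ whose interior lies in $X_i$) and meets $A_i$ transversely in exactly two points, satisfying the topological conditions for a $C_j$-registering generator. The element $\rho(g_i)$ is hyperbolic with fixed points $\rho(\sigma_i)(q_i^\pm)$ and translation length $N_i\,\ell(\rho(\gamma_i))$; taking each $N_i$ large enough, one can pick pairwise disjoint spherical disks $D_i^\pm$ around the $\rho(\sigma_i)(q_i^\pm)$ satisfying the Schottky condition for $\rho(g_i)$ and disjoint from $D_0^\pm$. Klein's ping-pong lemma then yields that $\langle\rho(a),\rho(g_1),\ldots,\rho(g_l)\rangle$ (or $\langle\rho(a),\rho(g_1),\rho(g_2)\rangle$ in case (b)) is a Schottky group freely generated by these elements, so $H=\langle a,g_1,\ldots\rangle$ is free on its generators (because $\rho|_H$ is injective onto a free group), hence a $C_j$-registering subgroup with $\rho(H)$ Schottky.

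The main technical hurdle is arranging the fixed points $\rho(\sigma_i)(q_i^\pm)$ to lie outside $D_0^+\cup D_0^-$: when $\rho(a)$ has small translation length the family of Schottky pairs for $\rho(a)$ is narrow and the pair $(D_0^-,D_0^+)$ covers a large portion of $\rs$, so one must use the fact that the image under $\rho(\sigma_i)$ of the limit set of $\rho(\pi_1(X_i))$ is a perfect uncountable subset of $\rs$ (or vary the Schottky pair within its one-parameter family) in order to steer the chosen fixed points into the allowed region. The remaining Schottky inclusions are then standard consequences of large translation length.
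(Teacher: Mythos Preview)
Your overall strategy---Klein combination/ping-pong with $\rho(a)$ and one hyperbolic element per complementary piece---is exactly the paper's, and the topological verification that $g_i=\sigma_i\gamma_i^{N_i}\sigma_i^{-1}$ satisfies the $C_j$-registering conditions is fine. The problem is precisely the ``hurdle'' you flag, and neither of your proposed fixes is adequate as stated. That the relevant limit set is perfect and uncountable does not prevent it from lying inside $D_0^-\cup D_0^+$. Varying the Schottky pair might place the fixed points for one index $i$ correctly, but you give no argument that a single choice works simultaneously for all $i$; and in case~(a) with winding number $n\ge 2$, the subgroup $\sigma_i\pi_1(X_i)\sigma_i^{-1}\subset\pi_1(M,x_0)$ contains only $a^n$, not $a$, so its limit set is merely $\langle\rho(a)^n\rangle$-invariant and need not meet a given fundamental domain for $\langle\rho(a)\rangle$.

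The paper dissolves this difficulty by working with $\pi_1(\overline{X_i},x_0)$ in case~(b) and with $\pi_1(Y_i,x_0)$, $Y_i=X_i\cup A_i\cup T_j$, in case~(a). These subgroups contain $a$ itself, so their limit sets are $\langle\rho(a)\rangle$-invariant and therefore meet the interior of \emph{any} fundamental annulus $F$ for $\langle\rho(a)\rangle$; density of hyperbolic fixed-point pairs in a non-elementary limit set then yields elements with both fixed points in ${\rm int}(F)$, and a single $F$ serves for all $i$ at once. In your framework the same effect is obtained by allowing $g_i=a^{k_i}\sigma_i\gamma_i^{N_i}\sigma_i^{-1}a^{-k_i}$ for suitable $k_i$ (still a loop in $T_j\cup X_i$ meeting $A_i$ twice), which translates the fixed points by $\rho(a)^{k_i}$ into ${\rm int}(F)$. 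A minor side remark: $\rho(\sigma_i)$ is not defined for a path $\sigma_i$; the paper's choice of basepoint on $a$ avoids this notational issue entirely.
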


\begin{proof}
We first suppose that $C_j=\{ A\}$ is a frontier annulus of an interval bundle component of
$\Sigma(M)$ (and that $A$ is not properly isotopic to a frontier annulus of
a solid torus component of $\Sigma(M)$) and let $x_0\in A$. We identify $\pi_1(M)$ with $\pi_1(M,x_0)$.
Let $X_1$ and $X_2$ be the (distinct) components of \hbox{$M-Fr(\Sigma)$} abutting $A$.
Notice that each $X_i$ must have non-abelian fundamental group, since
it either contains (the interior of) an interval bundle component of $\Sigma(M)$ or 
(the interior of) a component of $M-\Sigma(M)$ which is not a solid torus lying
between an interval bundle component of $\Sigma(M)$ and a solid torus component
of $\Sigma(M)$.

Let $a$ be the core curve of $A$ (based at $x_0$). By assumption,
$\rho(a)$ is a hyperbolic element.
Let $F$ be a fundamental
domain for the action of \hbox{$<\rho(a)>$} on $\Omega(<\rho(a)>)$
which is an annulus in $\rs$. Since each $\rho(\pi_1(\overline{X_i},x_0))$ is discrete,
torsion-free and non-abelian, hence non-elementary, we may
choose hyperbolic elements $\gamma_i\in \rho(\pi_1(\overline{X_i},x_0))$ 
whose fixed points lie in the interior of $F$. There exists $s>0$ such that one may
choose (round) disks $D_i^\pm\subset {\rm int}(F)$ about the fixed points 
of $\gamma_i$, such that $\gamma_i^s({\rm int}(D_i^-))=\rs -D_i^+$, and 
$D_1^+$, $D_1^-$, $D_2^+$ and $D_2^-$ are disjoint.  
Then, the Klein Combination Theorem (commonly referred to as
the ping pong lemma), guarantees that $\rho(a)$,
$\gamma_1^s$ and $\gamma_2^s$ freely generate a Schottky group,
see, for example, Theorem C.2 in Maskit \cite{maskit-book}.
Then each $\rho^{-1}(\gamma_i^s)$ is represented by a curve $g_i$ in $\overline{X_i}$
based at $x_0$ and $a$, $g_1$ and $g_2$ generate a $C_j$-registering
subgroup $H$ such that $\rho(H)$ is Schottky.

Now suppose that $C_j=\{A_1,\ldots,A_l\}$ is the collection of frontier annuli
of a solid torus component $T_j$ of $\Sigma(M)$. Let $X_i$ be the component
of \hbox{$M - (T_j \cup C_1 \cup \ldots \cup C_m)$} abutting $A_i$.
Pick $x_0$ in $T_j$ and let $a$ be a core curve of $T_j$ passing through $x_0$. 
Again each $X_i$ must have non-abelian fundamental group.

Let $F$ be an annular fundamental domain for the action of $<\rho(a)>$ on the complement
in $\rs$ of the fixed points of $\rho(a)$.
For each $i$, let \hbox{$Y_i=X_i\cup A_i\cup {\rm int}(T_j')$} and
pick a hyperbolic element $\gamma_i$ in $\rho(\pi_1(Y_i,x_0))$
both of whose fixed points lie in the interior of $F$. (Notice that even though
it could be the case that $X_i=X_k$ for $i\ne k$, we still have
that $\pi_1(Y_i,x_0)$ intersects $\pi_1 (Y_k,x_0)$ only in the subgroup
generated by $a$, so these hyperbolic elements are all distinct.) Then, just
as in the previous case, there exists $s>0$ such that the elements
$\{\rho(a),\gamma_1^s,\ldots,\gamma_l^s\}$ freely generate a Schottky group.
Each $\rho^{-1}(\gamma_i^s)$ can be represented by a loop $g_i$
based at $x_0$ which lies in $Y_i$ and intersects
$A_i$ exactly twice.  Therefore, the group $H$ generated by $\{a,g_1,\ldots,g_2\}$
is $C_j$-registering and $\rho(H)$ is Schottky.
\end{proof}

\section{Proper discontinuity on $AH_n(M)$}
\label{propdisc}

We are finally prepared to prove that ${\rm Out}(\pi_1(M))$ acts properly discontinuously
on an open neighborhood of $AH_n(M)$ if $M$ is a compact hyperbolizable
3-manifold with incompressible boundary and no toroidal boundary components
which is not an interval bundle.

\begin{theorem}
\label{Pdiscnbhd} Let $M$ be a compact hyperbolizable $3$-manifold with nonempty incompressible boundary and no toroidal boundary components which is not an
interval bundle. Then there exists an open ${\rm Out}(\pi_1(M)$-invariant neighborhood 
$W(M)$ of $AH_n(M)$ in $X(M)$ such that ${\rm Out}(\pi_1(M))$  acts properly discontinuously on $W(M)$.
\end{theorem}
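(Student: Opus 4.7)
The plan is to build $W(M)$ as a union of preimages, under restriction maps to character varieties of free subgroups of $\pi_1(M)$, of sets on which outer-automorphism actions are known to be properly discontinuous. For each characteristic collection of annuli $C_j$ and each $C_j$-registering subgroup $H \subset \pi_1(M)$, consider the restriction map $r_H : X(M) \to X(H)$ and set
$$W_H = r_H^{-1}(PS(H)),$$
where $PS(H)$ denotes the Minsky primitive-stable representations of the free group $H$. For each interval bundle component $\Sigma_i$ of $\Sigma(M)$ that is not tiny, also include $r_{\pi_1(\Sigma_i)}^{-1}(\mathcal{S}(\pi_1(\Sigma_i)))$, where $\mathcal{S}$ denotes Schottky space. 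Define $W(M)$ to be the union of all these sets over all $j$, all $C_j$-registering $H$, and all non-tiny interval bundle components. Each piece is open in $X(M)$ since $PS(H)$ and $\mathcal{S}$ are open and the restriction maps are continuous, so $W(M)$ is open. The inclusion $AH_n(M) \subset W(M)$ is then immediate from Lemmas \ref{IbundleSchottky} and \ref{CjSchottky}.

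Next I would verify that $W(M)$ is ${\rm Out}(\pi_1(M))$-invariant. Since $J(M)$ has finite index in ${\rm Mod}(M)$ and ${\rm Mod}(M)$ has finite index in ${\rm Out}(\pi_1(M))$ (by Johannson \cite[Proposition 27.1]{johannson} together with Waldhausen's theorem), it suffices to show that each element of ${\rm Out}(\pi_1(M))$ maps the defining collection of sets to itself. Johannson's Classification Theorem is the key input: any homotopy equivalence of $M$ may be realized so that it preserves $\Sigma(M)$, permutes the components of $\Sigma(M)$, and restricts to a homeomorphism on $\overline{M - \Sigma(M)}$. In particular such a map permutes the characteristic collections of annuli $C_j$ and sends a $C_j$-registering subgroup to a $C_{\sigma(j)}$-registering subgroup up to conjugacy, which via the $r_H$'s translates into $\phi \cdot W_H = W_{\phi_*(H)}$.

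The main work is proper discontinuity. By the finite index remark, it suffices to prove that $J(M)$ acts properly discontinuously on $W(M)$. Suppose $\rho_n \in W(M)$ converges to $\rho_\infty$, $\phi_n \in J(M)$ are distinct, and $\rho_n \circ \phi_n$ converges. Using the exact sequence of Theorem \ref{Jstructure}, write $\phi_n = \alpha_n \beta_n$ with $\alpha_n \in K(M)$ and $\beta_n$ a coset representative of $K(M)$ in $J(M)$; the image $p_\Sigma(\beta_n)$ lies in the direct sum of the $G(\Sigma_i, Fr(\Sigma_i))$. For solid torus and tiny components the contributing factor is finite, while for each non-tiny interval bundle component $\Sigma_i$, the factor $G(\Sigma_i, Fr(\Sigma_i))$ injects into ${\rm Out}(\pi_1(\Sigma_i))$, and ${\rm Out}(F_n)$ acts properly discontinuously on Schottky space. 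Hence, after passing to a subsequence using the convergence of $r_{\pi_1(\Sigma_i)}(\rho_n \circ \phi_n)$ on the Schottky loci, only finitely many $\beta_n$ occur, so we may assume $\beta_n$ is constant and replace $\phi_n$ by $\alpha_n \in K(M)$.

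Finally, since the $\alpha_n$ are eventually distinct and $K(M) = \bigoplus_j K_j$, after passing to a subsequence there is some $j$ for which $q_j(\alpha_n)$ is eventually distinct in $K_j$. Choose an open $W_H$ from the defining cover of $W(M)$ with $H$ a $C_j$-registering subgroup and $\rho_\infty \in W_H$; by openness we may assume $\rho_n \in W_H$ for all $n$. By Lemma \ref{Kjinject},
$$r_H(\rho_n \circ \alpha_n) = r_H(\rho_n) \circ s_H(q_j(\alpha_n)),$$
and the left-hand side converges in $PS(H)$; the sequence $\{s_H(q_j(\alpha_n))\}$ consists of distinct elements of ${\rm Out}(H)$ by injectivity of $s_H$. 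Since ${\rm Out}(H)$ acts properly discontinuously on $PS(H)$ by Minsky's theorem, this is a contradiction. The hard part is the reduction to a single $W_H$: one must simultaneously control the $G(\Sigma, Fr(\Sigma))$-part of $\phi_n$ by the projections to $X(\pi_1(\Sigma_i))$ and the $K(M)$-part by the projections to $X(H)$ for an appropriately chosen registering subgroup, which is where the structure $K(M) = \oplus K_j$ and the characteristic decomposition of $M$ become essential.
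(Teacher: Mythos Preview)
Your overall strategy matches the paper's, but the definition of $W(M)$ as a \emph{union} of the pieces $W_H$ and $r_{\pi_1(\Sigma_i)}^{-1}(\mathcal{S})$ is a genuine error that breaks the proper discontinuity argument. The paper instead sets $Z(C_j)=\bigcup_H W_H$ (union over $C_j$-registering $H$), then $Z(M)=\bigcap_j Z(C_j)$, and similarly $V(M)=\bigcap_i r_i^{-1}(PS(\Sigma_i))$, and finally $W(M)=Z(M)\cap V(M)$. The intersection over $j$ (and over $i$) is essential.

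Here is why your union fails. In your final step you pass to a subsequence so that $q_j(\alpha_n)$ is distinct for some specific $j$, and then you write ``choose an open $W_H$ from the defining cover of $W(M)$ with $H$ a $C_j$-registering subgroup and $\rho_\infty\in W_H$.'' But membership in a union only tells you $\rho_\infty$ lies in \emph{some} piece, which may be $W_{H'}$ for a $C_{j'}$-registering $H'$ with $j'\neq j$, or one of the interval-bundle pieces. By the discussion preceding Lemma~\ref{Kjinject}, $K_j$ acts trivially on any $C_{j'}$-registering subgroup $H'$ when $j'\neq j$, so $r_{H'}(\rho_n\circ\alpha_n)=r_{H'}(\rho_n)$ and no contradiction arises. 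The same problem occurs one step earlier: to control the $G(\Sigma,Fr(\Sigma))$-part via proper discontinuity on $PS(\Sigma_i)$ you need $r_i(\rho_n)$ to lie in $PS(\Sigma_i)$ for the \emph{particular} non-tiny $\Sigma_i$ on which $p_i(\phi_n)$ is distinct; a union gives you no such guarantee. With the intersection definition, every $\rho\in W(M)$ lies in some $W_H$ for \emph{each} $j$ and in $r_i^{-1}(PS(\Sigma_i))$ for \emph{each} non-tiny $i$, which is exactly what your two reductions require. Lemmas~\ref{IbundleSchottky} and~\ref{CjSchottky} already show $AH_n(M)$ sits inside this intersection, so nothing is lost.
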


\medskip

Notice that Theorem \ref{thm:intro3} is an immediate consequence of
Proposition \ref{lem:comp_implies_annulus},
Lemma \ref{allofit} and Theorem \ref{Pdiscnbhd}.
Moreover,
Theorem \ref{openpd} is an 
immediate corollary of Lemma \ref{allofit}
and Theorem \ref{Pdiscnbhd}.

\medskip

We now provide a brief outline of the section. In section 9.1 we recall Minsky's
work which shows that ${\rm Out}(\pi_1(H_n))$ acts properly discontinuously
on the open set $PS(H_n)$ of primitive-stable representations in $X(H_n)$
where $H_n$ is the handlebody of genus $g$.  In section 9.2,
we consider the set $Z(M)\subset X(M)$
such that if  $\rho\in Z(M)$ and $C_j$ is a characteristic collection of annuli,
then there exists a $C_j$-registering subgroup $H$ of $\pi_1(M)$ such
that $\rho|_H$ is primitive stable. We use Minsky's work to show that 
$K(M)$ acts properly discontinuously on $Z(M)$. In section 9.3, we consider the set
$V(M)$ of all representation such that $\rho|_{\pi_1(\Sigma_i)}$ is primitive-stable whenever
$\Sigma_i$ is an interval bundle component of $\Sigma(M)$ which is not tiny. 
We show that if $\{\alpha_n\}$ is a sequence in $J(M)$ such that $\{\rho_\Sigma(\alpha_n)\}$
is a sequence of distinct elements and $K$ is  compact subset of $V(M)$,
then $\{\alpha_n(K)\}$ leaves every compact set.
In section 9.4, we let $W(M)=Z(M)\cap V(M)$ and combine
the work in the previous sections to show that $J(M)$
acts properly discontinuously on $W(M)$. Since $J(M)$ has finite
index in ${\rm Out}(\pi_1(M))$ (see \cite{CM}),
this immediately implies Theorem \ref{Pdiscnbhd}. Johannson's Classification
Theorem is used to show that $J(M)$ is  invariant under ${\rm Out}(\pi_1(M))$.

\subsection{Schottky groups and primitive-stable groups}

In this section, we recall Minsky's work \cite{minsky-primitive} on
primitive-stable representations of the free group $F_n$, where $n\ge 2$.
An element of $F_n$ is called {\em primitive} if it is an element of a minimal free
generating set for $F_n$. Let $X$ be a bouquet of $n$ circles with base point $b$
and fix a specific identification of $\pi_1(X,b)$ with $F_n$. To a conjugacy class
$[w]$ in $F_n$ one can associated an infinite geodesic in $X$ which 
is obtained by concatenating  infinitely many copies of a cyclically reduced
representative of $w$ (here the cyclic reduction is in the generating set
associated to the natural generators of $\pi_1(X,b)$). Let $\mathcal{P}$
denote the set of infinite geodesics in the universal cover $\tilde X$ of
$X$ which project to geodesics associated to primitive words of $F_n$.

Given a representation $\rho:F_n\to \PSL_2({\bf C})$, $x\in\H^3$ and a lift
$\tilde b$ of $b$, one obtains a unique $\rho$-equivariant
map $\tau_{\rho,x}:\tilde X\to \H^3$ which
takes $\tilde b$ to $x$ and maps each edge of $\tilde X$ to a geodesic.
A representation $\rho:F_n\to \PSL_2({\bf C})$ is {\em primitive-stable} if there
are constants $K,\delta>0$ such that $\tau_{\rho,x}$ takes all
the geodesics in $\mathcal{P}$ to $(K,\delta)$-quasi-geodesics in $\H^3$.
We let $PS(H_n)$ denote the set of (conjugacy classes) of primitive-stable
representations  in $X(H_n)$ where $H_n$ is the handlebody of genus $n$.

We summarize the key points of  Minsky's work which we use
in the remainder of the section.
We recall that Schottky space $\mathcal{S}_n\subset X(H_n)$ is the space
of discrete faithful representations whose image is a Schottky group and
that $\mathcal{S}_n$
is the interior  of $AH(H_n)$.  

\begin{theorem}
\label{PSfacts}
{\rm (Minsky \cite{minsky-primitive})}
If $n\ge2$,  then
\begin{enumerate}
\item
${\rm Out}(F_n)$ acts properly discontinuously on $PS(H_n)$, 
\item
$PS(H_n)$ is an open subset of $X(H_n)$, and
\item
Schottky space $\mathcal{S}_n$ is a proper subset of $PS(H_n)$.
\end{enumerate}
Moreover, if $K$ is any compact subset of $PS(H_n)$,
and $\{\alpha_n\}$ is a sequence of distinct elements of ${\rm Out}(F_n)$,
then  $\{\alpha_n(K)\}$ exits every compact subset of $X(H_n)$ (i.e. for any
compact subset $C$ of $X(H_n)$ there exists $N$ such that if $n\ge N$,
then $\alpha_n(K)\cap C=\emptyset$).
\end{theorem}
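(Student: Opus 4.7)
\medskip\noindent
\textbf{Proof proposal.}
The three assertions together say that the primitive-stable condition is at once robust under perturbation and rigid enough to uniformly control the geometry of primitive conjugacy classes; the strict inclusion $\mathcal{S}_n\subsetneq PS(H_n)$ is a separate existence statement. My plan is to prove (2) first, extract from it uniform constants on compact subsets of $PS(H_n)$, then deduce (1) together with the ``moreover'' escaping statement by a marked length spectrum argument, and finally exhibit a non-Schottky primitive-stable representation by a geometric limit construction.

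For (2), fix $\rho_0\in PS(H_n)$ with constants $(K,\delta)$. The equivariant map $\tau_\rho$ depends smoothly on $\rho$ on each individual edge of $\widetilde X$, so a small perturbation $\rho'$ of $\rho_0$ changes each edge of the image by a controlled amount. For a primitive geodesic $\gamma\in\mathcal{P}$, viewed as a bi-infinite concatenation of edges, the perturbed image $\tau_{\rho'}(\gamma)$ stays close to $\tau_{\rho_0}(\gamma)$ on any finite segment; Morse stability of quasi-geodesics in $\mathbb{H}^3$ promotes this local closeness to the global quasi-geodesic property with slightly worse constants $(K+\varepsilon,\delta+\varepsilon)$. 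A finite subcover argument then shows that on any compact $L\subset PS(H_n)$ there is a single pair of primitive-stable constants that works for every $\rho\in L$ and every $\gamma\in\mathcal{P}$ simultaneously.

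For (1) together with the ``moreover,'' fix such a compact $L\subset PS(H_n)$ and a sequence $\{\alpha_n\}$ of distinct elements of ${\rm Out}(F_n)$, and suppose toward contradiction that $\alpha_n\cdot L$ does not leave every compact subset of $X(H_n)$. Passing to a subsequence, choose $\rho_n\in L$ with $\rho_n\to\rho_\infty\in L$ and $\alpha_n\cdot\rho_n\to\sigma\in X(H_n)$. Uniform primitive-stability on $L$ gives a constant $C>0$ with
\[ |w|/C - C \;\le\; \ell_\rho(w) \;\le\; C|w| + C \qquad (\rho\in L,\ w \text{ primitive}),\]
where $|w|$ is cyclically reduced word length, by comparing the translation length of $\rho(w)$ with one period of its quasi-geodesic image. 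For a free basis $(a_1,\ldots,a_n)$ of $F_n$, convergence of $\alpha_n\cdot\rho_n$ bounds each $\ell_{\alpha_n\cdot\rho_n}(a_i)=\ell_{\rho_n}(\alpha_n^{-1}(a_i))$ uniformly in $n$, and the lower bound then bounds $|\alpha_n^{-1}(a_i)|$. Since each $\alpha_n^{-1}(a_i)$ is primitive and only finitely many primitive conjugacy classes have bounded length, the tuples $(\alpha_n^{-1}(a_1),\ldots,\alpha_n^{-1}(a_n))$ take finitely many values, forcing the same of $\{\alpha_n\}$ and contradicting distinctness.

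For (3), I would construct $\rho\in PS(H_n)\setminus\mathcal{S}_n$ as an algebraic limit of a sequence in $\mathcal{S}_n$ in which a chosen non-primitive element has been pinched to parabolic; such $\rho$ lies outside $\mathcal{S}_n$ since Schottky groups are purely hyperbolic. The main obstacle of the whole theorem lives here: one must verify that primitive-stability survives the degeneration. This reduces to a combinatorial statement about $F_n$, namely that no primitive cyclically reduced word contains long subwords equal to high powers of the pinched non-primitive element, so that images of primitive geodesics enter the newly-formed thin part of $N_\rho$ only briefly and remain uniformly quasi-geodesic in the limit.
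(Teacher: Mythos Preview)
The paper does not prove this theorem at all: it is stated as a summary of results from Minsky \cite{minsky-primitive} and is simply cited. There is therefore no ``paper's own proof'' to compare against; what you have written is an attempted reconstruction of Minsky's argument, not of anything in the present paper.

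That said, your outline does follow Minsky's strategy closely for parts (2) and (3), and the idea for (1) is the right one, but the step where you pass from bounded translation lengths to finitely many outer automorphisms has a genuine gap. You bound $\ell_{\rho_n}(\alpha_n^{-1}(a_i))$ and hence the \emph{cyclically reduced} word length of $\alpha_n^{-1}(a_i)$; this only controls the conjugacy class of each $\alpha_n^{-1}(a_i)$. Knowing the conjugacy classes of the images of a basis does not determine an element of ${\rm Out}(F_n)$ for $n\ge 3$: for instance the non-inner automorphism of $F_3=\langle a,b,c\rangle$ sending $a\mapsto a$, $b\mapsto b$, $c\mapsto bcb^{-1}$ fixes the conjugacy class of every generator. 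So ``the tuples take finitely many values'' does not follow from what you have established.

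The fix, which is what Minsky actually does, is to work with displacement at a basepoint rather than translation length. Lift $\alpha_n\cdot\rho_n\to\sigma$ to a convergent sequence of genuine representations, so that the matrices $\rho_n(\tilde\alpha_n^{-1}(a_i))$ stay bounded in $\PSL_2({\bf C})$; then $d(x,\rho_n(\tilde\alpha_n^{-1}(a_i))\cdot x)$ is bounded, and the uniform quasi-geodesic estimate along primitive axes (after arranging by an inner automorphism that the relevant words are cyclically reduced, so the segment from $\tilde b$ to $\tilde\alpha_n^{-1}(a_i)\cdot\tilde b$ actually lies on a primitive axis) bounds the honest word length of each $\tilde\alpha_n^{-1}(a_i)$. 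From there your finiteness conclusion is correct.
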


\noindent
{\bf Remark:}
In order to prove our main theorem it would suffice to use
Schottky space $\mathcal{S}_n$ in place of $PS(H_n)$. However,
the subset $W(M)$ we obtain using $PS(H_n)$ is larger
than we would obtain using simply $\mathcal{S}_n$.

\subsection{Characteristic collection of annuli}

We will assume for the remainder of the section that $M$ is a
compact hyperbolizable 3-manifold with incompressible boundary and
no toroidal boundary components which is not an interval bundle.
Main Topological Theorem 2 in Canary and McCullough \cite{CM} 
(which is itself an exercise in applying Johannson's theory) implies that that
if $M$ has incompressible boundary and no toroidal boundary components,
then ${\rm Mod}(M)$ has finite index in ${\rm Out}(\pi_1(M))$. Therefore,
applying Theorem \ref{Jstructure}, we see that $J(M)$ has finite
index in ${\rm Out}(\pi_1(M))$.

Let $C_j$ be a characteristic collection of annuli in $M$.
If $H$ is a $C_j$-registering subgroup of $\pi_1(M)$, then 
the inclusion of $H$ in $\pi_1(M)$
induces a natural injection $s_H:K_j\to {\rm Out}(H)$ 
such that  if $\alpha\in K(M)$, then $r_H(\rho\circ\alpha)=r_H(\rho)\circ s_H(q_j(\alpha))$
where $r_H(\rho)=\rho|_H$ (see Lemma \ref{Kjinject}).
Let $Z_H=r_H^{-1}(PS(H))$ where $PS(H)\subset X(H)$ is the set of 
(conjugacy classes of) primitive-stable
representations of $H$.
Let $Z(C_j)=\bigcup Z_H$ where the union is taken over all
$C_j$-registering subgroups $H$ of $\pi_1(M)$.

If $\{ C_1,\ldots, C_m\}$ is the set of all characteristic collections of annuli for $M$,
then we define
$$Z(M)=\bigcap_{i=1}^m Z(C_j).$$
We use Lemma \ref{CjSchottky}, Theorem \ref{PSfacts}, and Johannson's Classification
Theorem to prove:

\begin{lemma}{}
\label{controlK} 
Let $M$ be a compact hyperbolizable $3$-manifold with nonempty incompressible boundary and no toroidal boundary components. Then
\begin{enumerate}
\item 
$Z(M)$ is an ${\rm Out}(\pi_1(M))$-invariant open neighborhood of $AH_n(M)$ in $X(M)$,
and
\item
if $K\subset Z(M)$ is compact and
$\{\alpha_n\}$ is a sequence of distinct elements of $K(M)$,
then $\alpha_n(K)$ exits every compact set of $X(M)$.
\end{enumerate}
\end{lemma}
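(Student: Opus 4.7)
For part (1), openness of $Z(M)$ is immediate from the definitions: each $Z_H = r_H^{-1}(PS(H))$ is open since $r_H$ is continuous and $PS(H)$ is open in $X(H)$ by Theorem \ref{PSfacts}(2), so each $Z(C_j)=\bigcup_H Z_H$ is open and $Z(M)$ is a finite intersection of open sets. The containment $AH_n(M)\subset Z(M)$ is the combined content of Lemma \ref{CjSchottky} and Theorem \ref{PSfacts}(3): for each $\rho\in AH_n(M)$ and each characteristic collection $C_j$, Lemma \ref{CjSchottky} produces a $C_j$-registering subgroup $H$ with $\rho(H)$ Schottky, and Theorem \ref{PSfacts}(3) then places $r_H(\rho)$ in $PS(H)$, so $\rho\in Z_H\subset Z(C_j)$ for every $j$. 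For ${\rm Out}(\pi_1(M))$-invariance I would use Johannson's Classification Theorem to realize any $\phi\in {\rm Out}(\pi_1(M))$ by a self-homotopy equivalence of $M$ that preserves $\Sigma(M)$ and restricts to a homeomorphism on $\overline{M-\Sigma(M)}$; such a map permutes the characteristic collections $\{C_1,\ldots,C_m\}$ by some permutation $\sigma$ and carries each $C_j$-registering subgroup $H$ to a $C_{\sigma(j)}$-registering subgroup $\phi(H)$. Since the set of primitive elements in a free group is invariant under all automorphisms, primitive-stability of $r_H(\rho)$ transfers to primitive-stability of $r_{\phi(H)}(\phi\cdot\rho)$, giving $\phi\cdot\rho\in Z(C_{\sigma(j)})$ for each $j$, hence $\phi\cdot\rho\in Z(M)$.

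For part (2), let $K\subset Z(M)$ be compact and $\{\alpha_n\}\subset K(M)$ a sequence of distinct elements. Writing $\alpha_n=\sum_{j=1}^m q_j(\alpha_n)$ using the decomposition $K(M)=\bigoplus_j K_j$, if $\{q_j(\alpha_n)\}$ were eventually constant for every $j$ then $\{\alpha_n\}$ would itself take only finitely many values, contradicting distinctness; thus, after passing to a subsequence, there is a fixed $j$ such that the elements $\{q_j(\alpha_n)\}$ of $K_j$ are all distinct. I will argue by contradiction: suppose there is a compact $C\subset X(M)$ with $\alpha_n(K)\cap C\neq\emptyset$ infinitely often. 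After further extraction, choose $\rho_n\in K$ with $\alpha_n\cdot\rho_n\in C$ and arrange $\rho_n\to \rho\in K$ and $\alpha_n\cdot\rho_n\to \sigma\in C$.

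Since $\rho\in K\subset Z(C_j)$, pick a $C_j$-registering $H$ with $r_H(\rho)\in PS(H)$. Openness of $Z_H$ and continuity of $r_H$ force $r_H(\rho_n)\to r_H(\rho)$ with $r_H(\rho_n)$ eventually in a fixed compact neighborhood $K'\subset PS(H)$ of $r_H(\rho)$. The equivariance of Lemma \ref{Kjinject}, together with the fact that each $K_l$ with $l\neq j$ acts trivially on $H$, yields
\[ r_H(\alpha_n\cdot\rho_n) \;=\; s_H(q_j(\alpha_n))\cdot r_H(\rho_n) \;\in\; s_H(q_j(\alpha_n))(K'), \]
and these points converge to $r_H(\sigma)$. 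However, injectivity of $s_H$ (Lemma \ref{Kjinject}) makes $\{s_H(q_j(\alpha_n))\}$ a sequence of distinct elements of ${\rm Out}(H)$, so the final clause of Theorem \ref{PSfacts} applied to the compact set $K'\subset PS(H)$ forces the sets $s_H(q_j(\alpha_n))(K')$ to exit every compact subset of $X(H)$, contradicting the convergence of points drawn from them to $r_H(\sigma)$. The main obstacle is the bookkeeping across the several collections $C_j$: one must ensure that the same $j$ selected by the pigeonhole argument also admits a $C_j$-registering subgroup on which the limit representation $\rho$ is primitive-stable. This is exactly why $Z(M)$ was defined as the intersection $\bigcap_j Z(C_j)$ rather than a union, so that a single $\rho\in Z(M)$ comes equipped with a primitive-stable registering subgroup for every $C_j$ simultaneously.
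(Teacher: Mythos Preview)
Your argument is correct and follows essentially the same strategy as the paper's proof; your use of a convergent subsequence $\rho_n\to\rho$ to locate a single registering subgroup $H$ is a cosmetic variant of the paper's finite-cover argument (covering $K$ by finitely many $U_{x_i}\subset Z_{H_{x_i}}$ and pigeonholing), and works equally well since $X(M)$ is metrizable. The one logical slip is the ordering in part~(2): you pass to a subsequence with $\{q_j(\alpha_n)\}$ distinct \emph{before} assuming the contradiction hypothesis, but showing that this particular subsequence exits every compact set does not imply the original sequence does; the fix is simply to assume first that some compact $C$ meets $\alpha_n(K)$ infinitely often, pass to that subsequence, and \emph{then} extract further so that some $\{q_j(\alpha_n)\}$ is distinct, exactly as the paper does.
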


\begin{proof}
Lemma \ref{CjSchottky} implies that $AH_n(M)\subset Z(C_j)$  for each $j$,
so $AH_n(M)\subset Z(H)$. Moreover, since $r_H$ is continuous for all $H$, each $Z(C_j)$
is open, and hence $Z(M)$ is open.

Johannson's Classification Theorem implies
that if $C_j$ is a characteristic collection of annuli for $M$ and $\phi\in {\rm Out}(\pi_1(M))$,
then there exists a homotopy equivalence $h:M\to M$ such that $h_*=\phi$ and
$h(C_j)$ is also a characteristic collection of annuli for $M$. Moreover, if $H$ is
a $C_j$-registering subgroup of $\pi_1(M)$, then $\phi(H)$ is
a $h(C_j)$-registering subgroup of $\pi_1(M)$.  Therefore, $Z(M)$ is
${\rm Out}(\pi_1(M))$-invariant, completing
the proof of  claim (1).

If (2) fails to hold, then there is a compact subset $K$ of $Z(M)$, a compact
subset $C$ of $X(M)$ and a sequence  $\{\alpha_n\}$ of distinct elements of $K(M)$
such that $\alpha_n(K)\cap C$ is non-empty for all $n$. We may pass to
a subsequence, still called $\{\alpha_n\}$, so that there exists $j$ such that $\{q_j(\alpha_n)\}$ is a sequence
of distinct elements.  Since $X(M)$ is locally compact, 
for each $x\in K$, there exists an open neighborhood $U_x$
of $x$ and a $C_j$-registering subgroup $H_x$ such that the closure
$\bar U_x$ is a compact subset of $Z_{H_x}$. Since $K$ is compact,
there exists a finite collection of points $\{ x_1,\ldots, x_r\}$ such that
$K\subset U_{x_1}\cup \cdots\cup U_{x_r}$. Therefore, again passing
to subsequence if necessary, there must exists
$x_i$ such that $\alpha_n(U_{x_i})\cap C$ is non-empty for all $n$. Let
$U'=U_{x_i}$ and $H'=H_{x_i}$.
Lemma \ref{Kjinject} implies that 
$\{s_{H'}(q_j(\alpha_n))\}$ is a sequence of distinct elements of ${\rm Out}(H')$
and that $s_{H'}(q_j(\alpha_n))(r_{H'}(\bar U'))=r_{H'}(\alpha_n(\bar U'))$.
Theorem \ref{PSfacts} then
implies that  $\{s_{H'}(q_j(\alpha_n))(r_{H'}(\bar U'))\}=\{ r_{H'}(\alpha_n(\bar U'))\}$ exits every compact
subset of $X(H')$. Therefore, $\{\alpha_n(U')\}$ exits every compact subset of
$X(M)$ which is  a contradiction. We have thus established (2).
\end{proof}

\subsection{Interval bundle components of $\Sigma(M)$}

Let $\Sigma_i$ be an interval bundle component of $\Sigma(M)$ with base
surface $F_i$ and let $X(\Sigma_i)$ be its associated character variety.
There exists a natural restriction map $r_i:X(M)\to X(\Sigma_i)$ taking 
$\rho$ to $\rho|_{\pi_1(\Sigma_i)}$.
Recall that $G(\Sigma_i,Fr(\Sigma_i))$ injects into ${\rm Out}(\pi_1(\Sigma_i))$
(by Lemma \ref{Jstructure}), 
so acts effectively on $X(\Sigma_i)$.
Moreover, if $\alpha\in J(M)$, then 
$r_i(\rho\circ \alpha)=r_i(\rho)\circ p_i(\alpha)$
where $p_i$ is the projection of $J(M)$ onto $G(\Sigma_i, Fr(\Sigma_i))$.
If $\Sigma_i$ is not tiny, 
we define
$$V(\Sigma_i)=r_i^{-1}(PS(\Sigma_i)).$$

If $\{\Sigma_1,\ldots,\Sigma_n\}$ denotes the collection of
all interval bundle components of $\Sigma(M)$ which are not tiny, then
we let
$$V(M)=\bigcap_{i=1}^n V(\Sigma_i).$$

We use Lemma \ref{IbundleSchottky}, Theorem \ref{PSfacts}, and Johannson's Classification
Theorem to prove:

\begin{lemma}
\label{controlMod} 
Let $M$ be a compact hyperbolizable $3$-manifold with nonempty incompressible boundary and no toroidal boundary components which is not an
interval bundle. Then
\begin{enumerate}
\item
$V(M)$ is an ${\rm Out}(\pi_1(M))$-invariant open neighborhood of $AH_n(M)$ in $X(M)$, and
\item
if $K$ is a compact subset of $V(M)$ and $\{\alpha_n\}$ is a sequence in $J(M)$
such that $\{p_\Sigma(\alpha_n)\}$ is
a sequence of distinct elements
of $G(\Sigma,Fr(\Sigma))$, then $\{\alpha_n(K)\}$ exits every compact subset of $X(M)$.
\end{enumerate}
\end{lemma}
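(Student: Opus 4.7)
The plan is to imitate the strategy used in the proof of Lemma \ref{controlK}, substituting for each characteristic collection of annuli $C_j$ a non-tiny interval bundle component $\Sigma_i$ of $\Sigma(M)$, and substituting the restriction map $r_i:X(M)\to X(\Sigma_i)$ together with the projection $p_i:J(M)\to G(\Sigma_i,Fr(\Sigma_i))$ for the maps $r_H$ and $s_H\circ q_j$. Note that since $M$ is not an interval bundle, every interval bundle component $\Sigma_i$ of $\Sigma(M)$ has base surface with nonempty boundary, so $\pi_1(\Sigma_i)$ is free of rank at least two, and one is entitled to apply Theorem \ref{PSfacts} to $X(\Sigma_i)$.

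For part (1), openness is immediate: $V(M)$ is a finite intersection of the sets $V(\Sigma_i)=r_i^{-1}(PS(\Sigma_i))$, each of which is open since $PS(\Sigma_i)$ is open by Theorem \ref{PSfacts}(2) and $r_i$ is continuous. The inclusion $AH_n(M)\subset V(M)$ follows from Lemma \ref{IbundleSchottky}, which says $\rho(\pi_1(\Sigma_i))$ is Schottky for every $\rho\in AH_n(M)$ and every non-tiny interval bundle component $\Sigma_i$, combined with Theorem \ref{PSfacts}(3) that Schottky space is contained in $PS(\Sigma_i)$. For ${\rm Out}(\pi_1(M))$-invariance, Johannson's Classification Theorem realizes any $\phi\in{\rm Out}(\pi_1(M))$ by a homotopy equivalence of $M$ preserving the characteristic submanifold; such a map permutes the components of $\Sigma(M)$, preserves the class of non-tiny interval bundles, and induces isomorphisms $\phi_i:\pi_1(\Sigma_i)\to \pi_1(\Sigma_{\sigma(i)})$ up to conjugation. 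Thus $r_i(\rho\cdot\phi)$ is obtained from $r_{\sigma(i)}(\rho)$ by precomposition with $\phi_i$, and since ${\rm Out}$ of a free group preserves the primitive-stable locus (a consequence of Theorem \ref{PSfacts}(1)), we have $\rho\cdot\phi\in V(M)$.

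For part (2), write $p_\Sigma=\oplus_i p_i$, and recall from Theorem \ref{Jstructure} that $G(\Sigma_i,Fr(\Sigma_i))$ is finite whenever $\Sigma_i$ is a solid torus or a tiny interval bundle. If the sequences $\{p_i(\alpha_n)\}$ were finite for every $i$, then $\{p_\Sigma(\alpha_n)\}$ would be finite, contradicting hypothesis. So there is an index $i$ for which $\Sigma_i$ is a non-tiny interval bundle and $\{p_i(\alpha_n)\}$ takes infinitely many values; after passing to a subsequence we may assume the elements $p_i(\alpha_n)\in G(\Sigma_i,Fr(\Sigma_i))\hookrightarrow {\rm Out}(\pi_1(\Sigma_i))$ are pairwise distinct. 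Since $K\subset V(\Sigma_i)$, the set $r_i(K)$ is a compact subset of $PS(\Sigma_i)$, and the equivariance identity $r_i(\rho\cdot\alpha)=r_i(\rho)\cdot p_i(\alpha)$ gives
$$r_i(\alpha_n(K))=p_i(\alpha_n)(r_i(K)).$$
By the last assertion of Theorem \ref{PSfacts}, $\{p_i(\alpha_n)(r_i(K))\}$ exits every compact subset of $X(\Sigma_i)$. Continuity of $r_i$ (a compact set maps to a compact set) then forces $\{\alpha_n(K)\}$ to exit every compact subset of $X(M)$, completing part (2).

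The delicate step is the ${\rm Out}(\pi_1(M))$-invariance in part (1): one must use Johannson's Classification Theorem rather than just the structure of $J(M)$, because a general element of ${\rm Out}(\pi_1(M))$ may genuinely permute the interval bundle components of $\Sigma(M)$ and not merely act on each one individually. The remaining portions of the argument are essentially formal and parallel the proof of Lemma \ref{controlK}.
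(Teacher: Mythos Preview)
Your argument is correct and follows essentially the same route as the paper's: openness and the containment $AH_n(M)\subset V(M)$ via Lemma~\ref{IbundleSchottky} and Theorem~\ref{PSfacts}, ${\rm Out}(\pi_1(M))$-invariance via Johannson's Classification Theorem, and part~(2) by projecting to some non-tiny $\Sigma_i$ and invoking the proper discontinuity of ${\rm Out}(\pi_1(\Sigma_i))$ on $PS(\Sigma_i)$.

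There is one small logical slip in your presentation of~(2). You pass to a subsequence along which the $p_i(\alpha_n)$ are distinct and then conclude that \emph{this subsequence} exits every compact set; but the statement asks you to show the \emph{full} sequence $\{\alpha_n(K)\}$ exits. As written this does not follow: a sequence can have a subsequence tending to infinity while another subsequence stays bounded. The paper handles this by arguing by contradiction---assume some compact $C$ meets $\alpha_n(K)$ for infinitely many $n$, pass to that subsequence (which still has distinct $p_\Sigma$-values), and then run your argument to reach a contradiction. Alternatively, you could note that your argument applies verbatim to \emph{every} subsequence of $\{\alpha_n\}$ (each still has infinitely many distinct $p_\Sigma$-values), so every subsequence has a further subsequence that exits, and this is equivalent to the full sequence exiting.
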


\begin{proof}
Lemma \ref{IbundleSchottky}
implies that $AH_n(M)\subset V(\Sigma_i)$, for each $i$, and  each $V(\Sigma_i)$
is open since $r_i$ is continuous. Therefore, $V(M)$ is an open neighborhood of $AH_n(M)$.

Johannson's Classification Theorem implies that if $\phi\in {\rm Out}(\pi_1(M))$,
then there exists  a homotopy equivalence $h:M\to M$ such that $h(\Sigma(M))\subset\Sigma(M)$, $h|_{Fr(\Sigma)}$ is a self-homeomorphism of $Fr(\Sigma)$
and  $h$ induces $\phi$. Therefore, if $\Sigma_i$ is an interval bundle component
of $\Sigma(M)$, then \hbox{$\phi(\pi_1(\Sigma_i))$} is conjugate 
to $\pi_1(\Sigma_j)$ where
$\Sigma_j$ is also an interval bundle component of $\Sigma(M)$. Moreover,
if $\Sigma_i$ is not tiny, then $\pi_1(\Sigma_j)$ is also not tiny
(since $h|_{\Sigma_i}:\Sigma_i\to\Sigma_j$ is a homotopy equivalence which
is a homeomorphism on the frontier). 
It  follows that $V(M)$ is invariant under ${\rm Out}(\pi_1(M))$, completing
the proof of  claim (1).

If (2) fails to hold, then there is a compact subset $K$ of $Z(M)$, a compact
subset $C$ of $X(M)$ and a sequence  $\{\alpha_n\}$ of elements of $J(M)$
such that
$\{p_\Sigma(\alpha_n)\}$ is a sequence of distinct elements of $G(\Sigma,Fr(\Sigma))$
and $\alpha_n(K)\cap C$ is non-empty for all $n$.  If a component $\Sigma_i$ of $\Sigma(M)$
is a tiny interval bundle or a solid torus, then $G(\Sigma_i,Fr(\Sigma_i))$ is finite,
by Lemma \ref{Jstructure}.
So, we may pass to
a subsequence, so that there exists an interval bundle $\Sigma_i$ which is not tiny
such that $\{p_i(\alpha_n)\}$ is a sequence
of distinct elements of $G(\Sigma_i,Fr(\Sigma_i))$. Theorem \ref{PSfacts}
then implies that $\{p_i(\alpha_n)(r_i(K))\}$ leaves every compact subset of $X(\Sigma_i)$.
Therefore, since $r_i(\alpha_n(K))=p_i(\alpha_n)(r_i(K))$ for all $n$,
$\{\alpha_n(K)\}$ leaves every compact subset of $X(M)$. This contradiction
establishes claim (2).
\end{proof}

\subsection{Assembly}

Let $W(M)=V(M)\cap Z(M)$. Since $V(M)$ and $Z(M)$ are open ${\rm Out}(\pi_1(M))$-invariant
neighborhoods of $AH_n(M)$, so is $W(M)$.  It remains to prove that ${\rm Out}(\pi_1(M))$
acts properly discontinuously on $W(M)$. Since $J(M)$ is a finite index
subgroup of ${\rm Out}(\pi_1(M))$, it suffices to prove that $J(M)$ acts properly
discontinuously on $W(M)$. We will actually establish the following
stronger fact, which will complete the proof of Theorem \ref{Pdiscnbhd}.

\begin{lemma}
If $K$ is a compact subset of $W(M)$ and $\{\alpha_n\}$ is
a sequence of distinct elements of $J(M)$, then $\{\alpha_n(K)\}$ leaves
every compact subset of $X(M)$.
\end{lemma}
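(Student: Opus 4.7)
The plan is to argue by contradiction, splitting into two cases according to the behaviour of $\{p_\Sigma(\alpha_n)\}$ in the exact sequence
$$1 \longrightarrow K(M) \longrightarrow J(M) \overset{p_\Sigma}\longrightarrow G(\Sigma, Fr(\Sigma)) \longrightarrow 1$$
of Theorem \ref{Jstructure}, and apply Lemma \ref{controlMod} in one case and Lemma \ref{controlK} in the other.

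Suppose for contradiction that there is a compact subset $C$ of $X(M)$ such that $\alpha_n(K) \cap C \neq \emptyset$ for infinitely many $n$; after passing to a subsequence we may assume this holds for all $n$ and that the $\alpha_n$ are all distinct. Consider the sequence $\{p_\Sigma(\alpha_n)\}$ in $G(\Sigma, Fr(\Sigma))$. Either it contains a subsequence of pairwise distinct elements, or some value of $G(\Sigma,Fr(\Sigma))$ is attained infinitely often.

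In the first case, pass to the corresponding subsequence. Since $K \subset W(M) \subset V(M)$ and $\{p_\Sigma(\alpha_n)\}$ consists of distinct elements, Lemma \ref{controlMod}(2) asserts that $\{\alpha_n(K)\}$ exits every compact subset of $X(M)$, contradicting $\alpha_n(K) \cap C \neq \emptyset$.

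In the second case, pass to a subsequence with $p_\Sigma(\alpha_n) = p_\Sigma(\alpha_1)$ for every $n$, and set $\beta_n = \alpha_n \alpha_1^{-1}$. Then $\beta_n \in \ker(p_\Sigma) = K(M)$ for all $n$, and the $\beta_n$ are distinct since the $\alpha_n$ are. Since $W(M)$ is ${\rm Out}(\pi_1(M))$-invariant (being the intersection of the invariant sets $V(M)$ and $Z(M)$ from Lemmas \ref{controlMod}(1) and \ref{controlK}(1)), the set $K' = \alpha_1(K)$ is a compact subset of $W(M) \subset Z(M)$. Moreover $\alpha_n(K) = \beta_n(K')$. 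By Lemma \ref{controlK}(2), the sequence $\{\beta_n(K')\}$ exits every compact subset of $X(M)$, again contradicting $\alpha_n(K) \cap C \neq \emptyset$.

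Both cases yield a contradiction, so $\{\alpha_n(K)\}$ must leave every compact subset of $X(M)$, as required. The argument is essentially a clean dichotomy, and no step is a serious obstacle given the technology already developed; the only point requiring care is the invariance of $W(M)$ (used to ensure $K' = \alpha_1(K) \subset W(M)$), which is handed to us by the two preceding lemmas.
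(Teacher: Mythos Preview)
Your proof is correct and follows essentially the same contradiction/dichotomy strategy as the paper. The only cosmetic difference is in Case~2: you factor $\alpha_n = \beta_n\alpha_1$ and invoke the ${\rm Out}(\pi_1(M))$-invariance of $W(M)$ to ensure $K'=\alpha_1(K)\subset Z(M)$, whereas the paper factors $\alpha_n = \alpha_1\beta_n$ (with $\beta_n=\alpha_1^{-1}\alpha_n$), applies Lemma~\ref{controlK} directly to $K$, and then uses that $\alpha_1$ is a homeomorphism of $X(M)$ to conclude $\{\alpha_1(\beta_n(K))\}$ exits every compact set. Both routes are equally valid and equally short.
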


\begin{proof} If our claim fails, then there exists a compact subset
$K$ of $W(M)$, a compact subset $C$ of $X(M)$ and a sequence $\{\alpha_n\}$
of distinct elements of $J(M)$ such that $\alpha_n(K)\cap C$ is non-empty.
We may  pass to an infinite subsequence, still called $\{\alpha_n\}$,
such that either $\{p_\Sigma(\alpha_n)\}$ is a sequence of distinct elements or 
$\{\rho_\Sigma(\alpha_n)\}$ is constant.

If  $\{p_\Sigma(\alpha_n)\}$ is a sequence of distinct elements, Lemma \ref{controlMod} 
immediately implies
that $\{\alpha_n(K)\}$ leaves every compact subset of $X(M)$ and we obtain
the desired contradiction. 

If $\{\rho_\Sigma(\alpha_n)\}$ is constant, then, by Theorem \ref{Jstructure},
there exists a sequence $\{\beta_n\}$ of distinct elements of $K(M)$
such that $\alpha_n=\alpha_1\circ \beta_n$ for all $n$. Lemma \ref{controlK}
implies that $\{\beta_n(K)\}$ exits every compact subset of $X(M)$. Since
$\alpha_1$ induces a homeomorphism of $X(M)$, it follows that
$\{\alpha_n(K)=\alpha_1(\beta_n(K))\}$ also leaves every compact subset of
$X(M)$. This contradiction completes the proof.
\end{proof}


\begin{thebibliography}{99}

{\footnotesize
\bibitem{agol} I. Agol,  ``Tameness of hyperbolic 3-manifolds,''
preprint.

\bibitem{ACpages} J. W. Anderson and R. D. Canary, ``Algebraic limits of 
Kleinian groups which rearrange the pages of a book,'' 
\emph{Invent. Math.} \textbf{126} (1996), 205--214.

\bibitem{ACM} J. W. Anderson, R. D. Canary and D. McCullough, ``On the 
topology of deformation spaces of Kleinian groups,''
\emph{Annals of Math.} \textbf{152} (2000), 693-741.

\bibitem{BP} R. Benedetti and C. Petronio, \emph{Lectures on Hyperbolic
Geometry}, Springer-Verlag Universitext, 1992.

\bibitem{bers-survey} L. Bers, ``On moduli of Kleinian groups,''
\emph{Russian Math. Surveys} \textbf{29} (1974), 88--102.

\bibitem{Bo} F. Bonahon,
``Cobordism of automorphisms of surfaces,''
{\em Ann. Sci. \'Ecole Norm. Sup.}
{\bf 16} (1983), 237--270.

\bibitem{BO} F. Bonahon and J. P. Otal, ``Vari\'et\'es
hyperboliques  \`a g\'eod\'esiques arbitrairement courtes,'' {\em
Bull. L.M.S.} {\bf 20} (1988), 255--261.


\bibitem{Bow} B. Bowditch, ``Markoff triples and quasi-Fuchsian groups,''  {\em Proc. London Math. Soc.}  {\bf 77} (1998),  697--736.

\bibitem{brock-bromberg} J. Brock and K. Bromberg, ``On the density of
geometrically finite Kleinian groups,''  {\em Acta Math.}  {\bf 192}(2004), 33--93.

\bibitem{BCM} J. Brock, R. Canary and Y. Minsky, ``The classification of Kleinian surface
groups, II: The Ending Lamination Conjecture,'' preprint.

\bibitem{bromberg-deform} K.Bromberg, ``Rigidity of geometrically finite hyperbolic
cone-manifolds,'' {\em Geom. Ded.} {\bf 105}(2004), 143--170.

\bibitem{bromberg-PT}  K. Bromberg ``The space of Kleinian punctured torus groups is not locally connected,'' {\em Duke Math. J.}, to appear.

\bibitem{BH}  K. Bromberg and J. Holt, ``Self-bumping of deformation
spaces of hyperbolic $3$-manifolds,'' \emph{J. Diff. Geom.}
\textbf{57} (2001), 47--65.

\bibitem{calegari-gabai} D. Calegari and D. Gabai,  ``Shrinkwrapping and
the taming of hyperbolic 3-manifolds,'' {\em J. Amer. Math. Soc.}  {\bf 19}(2006), 385--446.

\bibitem{cover} R.D. Canary,  ``A covering theorem for hyperbolic 3-manifolds and its
applications,'' {\em Topology}  \textbf{35}(1996), 751--778.

\bibitem{CH} R.D. Canary and S. Hersonsky, ``Ubiquity of geometric 
finiteness in boundaries of deformation spaces of hyperbolic 3-manifolds,''
{\em Amer. J. Math.} {\bf 126}(2004), 1193--1220.

\bibitem{CM}  R.D. Canary and D. McCullough,
{\em Homotopy equivalences of 3-manifolds and deformation theory of
Kleinian groups,}  Mem. Amer. Math. Soc. {\bf 172}(2004), no. 812.

\bibitem{CS} R.D. Canary and P.A. Storm, ``The curious moduli spaces of unmarked Kleinian surface groups,'' {\em Amer. J. Math.}, to appear.

\bibitem{cantat} S. Cantat, ``Bers and H\'enon, Painlev\'e and Schr\"odinger,''
{\em Duke Math. J}, {\bf 149}(2009), 411--460.

\bibitem{comar} T. D. Comar, {\em Hyperbolic Dehn surgery and convergence
of Kleinian groups,} Ph.D. thesis, University of Michigan, 1996.

\bibitem{GoldMer} W. Goldman, ``The complex-symplectic geometry of
${\rm SL}_2({\bf  C})$-characters 
over surfaces,'' in {\em Algebraic groups and arithmetic}, Tata Inst. Fund. 
Res., Mumbai, 2004, 375--407. 

 
\bibitem{Gold} W. Goldman, ``Mapping class group dynamics on surface group representations,''  \emph{Problems on mapping class groups and related topics},  189--214, Proc. Sympos. Pure Math., 74, \emph{Amer. Math. Soc., Providence, RI}, 2006. 

\bibitem{hempel} J. Hempel, {\em 3-manifolds}, Princeton University Press, 1976.

\bibitem{HP} M. Heusener and J. Porti,
 ``The variety of characters in $\PSL_2 ({\bf C})$,'' {\em Bol. Soc. Mat. Mex.},
 {\bf 10}(2004), 221--238.

\bibitem{HK} C. Hodgson and S.P. Kerckhoff, 
``Universal bounds for hyperbolic Dehn surgery,'' {\em Annals of Math.}  {\bf 162}(2005),  367--421.

\bibitem{UGF} S. Hong and D. McCullough, ``Ubiquity of geometric finiteness in mapping
class groups of Haken 3-manifolds,'' {\em Pac. J. Math.} {\bf 188}(1999), 275--301.

\bibitem{JS} W. Jaco and P. Shalen, \emph{Seifert fibered spaces in $3$-manifolds}, Mem. Amer. Math. Soc. {\bf 21} (1979), no. 220.

\bibitem{JS-peripheral} W. Jaco and P. Shalen, ``Peripheral structure of
3-manifolds,'' {\em Invent. Math.} {\bf 38}(1976), 55--87.

\bibitem{johannson} K. Johannson, \emph{Homotopy Equivalences of $3$-manifolds with
Boundary}, Lecture Notes in Mathematics, vol. 761, Springer-Verlag, 1979.

\bibitem{Kap} M. Kapovich, \emph{Hyperbolic manifolds and discrete groups},
Progr. Math. {\bf 183},  Birkh{\"a}user, 2001.

\bibitem{KT} S.P. Kerckhoff and W.P. Thurston,
``Noncontinuity of the action of the modular group at Bers' boundary of Teichm\"uller space,''
{\em Invent. Math.} {\bf 100}(1990), 25--47. 

\bibitem{korkmaz} M. Korkmaz, ``Mapping class groups of non-orientable surfaces,''
{\em Geom. Ded.} {\bf 89}(2002), 109--133.

\bibitem{KS} R. Kulkarni and P. Shalen, ``On Ahlfors' finiteness theorem,'' {\em Adv. in Math.} {\bf 76} (1989), 155--169.

\bibitem{michelle} M. Lee, ``Dynamics on the ${\rm PSL}_2({\bf C})$-character variety
of a twisted $I$-bundle,'' in preparation.

\bibitem{magid} A. Magid, ``Deformation spaces of Kleinian surface groups
are not locally connected,'' preprint.

\bibitem{maskit-free} B. Maskit, ``On free Kleinian groups,'' 
{\em Duke Math. J}. {\bf 48}(1981), 755--765. 

\bibitem{maskit-book} B.  Maskit, {\em Kleinian groups}, Springer-Verlag, 1988.

\bibitem{McC} D. McCullough, ``Compact submanifolds of 3-manifolds with boundary,'' {\em Quart. J. Math. Oxford} {\bf 37}
(1986), 299--307.

\bibitem{VGF} D. McCullough, ``Virtually geometrically finite mapping class groups of
3-manifolds,'' {\em J. Diff. Geom.} {\bf 33}(1991), 1--65.

\bibitem{mccullough-miller} D. McCullough and A. Miller, \emph{Homeomorphisms of 
$3$-manifolds with compressible boundary},  Mem. Amer. Math. Soc. 
{\bf 61}(1986), no. 344. 

\bibitem{mcmullen} C. McMullen, {\em Renormalization and 3-manifolds which
fiber over the circle}, Princeton University Press, 1996.

\bibitem{mcmullenCE} C. T. McMullen, ``Complex earthquakes and Teichm\"uller
theory,'' \emph{J. Amer. Math. Soc.} \textbf{11} (1998),  283--320.	

\bibitem{minsky-primitive} Y. Minsky, ``Note on dynamics of ${\rm Out}(F_n)$ on $PSL_2(C)$-characters,'' preprint. 

\bibitem{Morgan} J. Morgan, ``On Thurston's uniformization theorem for three-dimensional manifolds'' in {\em The Smith Conjecture}, Pure Appl. Math. {\bf 112}, Academic Press, 1984, 37--125.

\bibitem{HN} H. Namazi and J.Souto, ``Non-realizability and ending lamination -- Proof of the
Density Conjecture,'' preprint.

\bibitem{ohshika-parabolic} K. Ohshika, ``Geometrically finite Kleinian groups and parabolic elements,''
{\em Proc. Edin. Math. Soc.} {\bf 41}(1998), 141--159. 

\bibitem{ohshika-density} K. Ohshika, ``Realising end invariants by limits of minimally
parabolic, geometrically finite groups,'' preprint.

\bibitem{scott} P. Scott, ``Compact submanifolds of $3$-manifolds,''
\emph{J. London Math. Soc.} \textbf{7} (1974), 246--250.

\bibitem{SS} J. Souto and P. Storm, ``Dynamics of the mapping class group action 
on the variety of $\PSL_2({\bf C})$-characters,'' {\em Geom. Topol.} {\bf 10}
(2006), 715--736.

\bibitem{sullivan2} D. Sullivan, ``Quasiconformal homeomorphisms and dynamics.
II. Structural stability implies hyperbolicity for Kleinian groups,''
\emph{Acta Math.} \textbf{155} (1985),  243--260.

\bibitem{thurston2} W. P. Thurston, ``Hyperbolic structures on 3-manifolds, II:
Surface groups and 3-manifolds which fiber over the circle,'' preprint.

\bibitem{thurston3} W.P. Thurston, ``Hyperbolic structures on 3-manifolds, III: Deformations of 3-manifolds with incompressible boundary,'' preprint.

\bibitem{thurston-notes} W. P. Thurston, {\em The Geometry and Topology of
Three-Manifolds}, Princeton University course notes, available
at http://www.msri.org/publications/books/gt3m/ (1980).

\bibitem{Tan}
S.P. Tan, Y. L. Wong, and Y. Zhang, ``Generalized Markoff maps and McShane's 
identity,'' {\em Adv. Math.} {\bf 217}(2008), 761--813. 

\bibitem{Wald} F. Waldhausen, ``On irreducible $3$-manifolds which are sufficiently large,'' 
{\em Annals of Math.} {\bf 87} (1968), 56--88.
}

\end{thebibliography}
 \end{document}